\newlist{inlineroman}{enumerate*}{1}
\setlist[inlineroman]{itemjoin*={{, and }},afterlabel=~,label=\roman*.}
\newcommand{\inlinerom}[1]{
\begin{inlineroman}
#1
\end{inlineroman}
}
\newlist{Inlineroman}{enumerate*}{1}
\setlist[Inlineroman]{itemjoin*={{, and }},afterlabel=~,label=\Roman*.}
\definecolor {processblue}{cmyk}{0.96,0,0,0}
\newtheorem{theorem}{Theorem}[section]
\newtheorem{proposition}[theorem]{Proposition}
\newtheorem{lemma}[theorem]{Lemma}
\newtheorem{corollary}[theorem]{Corollary}
\theoremstyle{remark}
\newtheorem{example}[theorem]{Example}
\theoremstyle{remark}
\newtheorem{remark}[theorem]{Remark}
\newcommand{\Addresses}{{
  \bigskip
  \footnotesize

  Huy Dang, \textsc{Department of Mathematics, University of Virginia,
    Charlottesville, Virginia 22903}\par\nopagebreak
  \textit{E-mail address}: \texttt{hqd4bz@virginia.edu}





}}
\DeclareMathOperator{\spec}{Spec}
\DeclareMathOperator{\Frac}{Frac}
\begin{document}

\title{Connectedness of The Moduli Space of Artin-Schreier Curves of Fixed Genus \vspace{-1ex}}
\author{Huy Dang \vspace{-5ex}}
\date{}
\maketitle

\abstract{We study the moduli space $\mathcal{AS}_{g}$
 of Artin-Schreier curves of genus $g$ over an algebraically closed field $k$ of positive characteristic $p$. The moduli space is partitioned into strata, which are irreducible. Each stratum parameterizes Artin-Schreier curves whose ramification divisors have the same coefficients. We construct deformations of these curves to study the relations between those strata. As an application, when $p=3$, we prove that  $\mathcal{AS}_{g}$ is connected for all $g$. When $p>3$, it turns out that $\mathcal{AS}_{g}$ is connected for a sufficiently large value of $g$. In the course of our work, we answer a question of Pries and Zhu about how a combinatorial graph determines the geometry of $\mathcal{AS}_g$.}
 
\section{Introduction}

Let $k$ be an algebraically closed field of characteristic $p>0$. An Artin-Schreier $k$-curve is a smooth projective connected $k$-curve $Y$ which is a $\mathbb{Z}/p$-cover of the projective line $\mathbb{P}^1_k$. In \cite{MR2985514}, the authors introduce the moduli space $\mathcal{AS}_g$ of Artin-Schreier $k$-curves of genus $g$. They enumerate a family of irreducible strata of $\mathcal{AS}_g$ by partitions of the integer $d+2$ where $d:=2g/(p-1)$ such that no entries are congruent to $1$ modulo $p$. From there, Pries and Zhu give the formula to calculate the dimension of a stratum from its corresponding partition (see \cite[Corollary 3.11]{MR2985514}). Furthermore, they construct a deformation to show whether a stratum lies in the closure of another under certain conditions (see \cite[Proposition 4.6]{MR2985514}). That technique, combine with the results of Maugeais in \cite{MR2223481} which showed that each irreducible component of $\mathcal{AS}_g$ has dimension $2g/(p-1)-1$, allowed them to determine all the cases when the moduli space of such covers is irreducible (see \cite[Corollary 1.2]{MR2985514}).

In this paper, based on the framework of \cite{MR2985514}, we continue to study the relations between these irreducible strata of $\mathcal{AS}_g$ with the aim of finding sufficient conditions for the moduli space to be connected. Corollary 1.2 of \cite{MR2985514} implies that $\mathcal{AS}_g$ is irreducible and thus connected when $p=2$. For $p>2$, the same corollary only tells us that $\mathcal{AS}_g$ is connected for small values of $g$. We are able to prove that for $p=3$, the moduli space $\mathcal{AS}_g$ is also connected for all $g$. When $p>3$, the following result, which implies that $\mathcal{AS}_g$ is connected for $g$ sufficiently large, is the main result of this paper.

\begin{theorem}
\label{connectedness}

When $p=3$, the moduli space $\mathcal{AS}_g$ is connected for any $g$.

When $p=5$,  $\mathcal{AS}_g$  is connected for any $g \ge 14$.

 When $p>5$, $\mathcal{AS}_g$ is connected if $g \ge \frac{(p^3-2p^2+p-8)(p-1)}{8}$.
 
 \end{theorem}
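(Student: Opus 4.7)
The plan is to reduce the connectedness of $\mathcal{AS}_g$ to the connectedness of a combinatorial graph $\Gamma_{p,g}$ whose vertices are the valid partitions of $d+2$ (those with no part congruent to $1 \bmod p$) indexing the strata of \cite{MR2985514}, and whose edges record the deformation relation of \cite[Proposition 4.6]{MR2985514}: put an edge between two partitions whenever the stratum of one lies in the closure of the stratum of the other. Since the strata are irreducible and exhaust $\mathcal{AS}_g$, topological connectedness of the moduli space follows from graph-theoretic connectedness of $\Gamma_{p,g}$ --- this is exactly the combinatorial reformulation asked for by Pries and Zhu.

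I would fix a distinguished target partition $\vec{e}_0$ (for example a canonical partition into a single large part, or into as many copies of $2$ as possible) and then show that every other valid partition admits a path to $\vec{e}_0$ through admissible moves. A merge combines two parts $e_i, e_j$ into a single part $e_i + e_j$, a split reverses this, and both must preserve the constraint that every part avoids the residue class $1 \bmod p$.

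For $p = 3$ the admissible residues $\{0, 2\}$ are closed under addition, so any two valid parts can always be merged; together with base cases for small $g$, this should give connectedness of $\Gamma_{3, g}$ for every $g$. For $p \ge 5$ the obstruction is genuine, since merging two small parts can easily land in the forbidden class $1 \bmod p$. The remedy is to detour through a partition of a different shape --- introducing an auxiliary small part via a split, reshuffling, then merging --- and such a detour is available only when $d+2$ is large enough to accommodate the required auxiliary part. The explicit bound $g \ge (p^3 - 2p^2 + p - 8)(p-1)/8$ should emerge by identifying the most rigid shape (likely a partition with many equal small parts concentrated in a single residue class) and computing the threshold below which no detour exists.

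The hardest step, I anticipate, will be the deformation-theoretic verification rather than the combinatorics. Applying \cite[Proposition 4.6]{MR2985514} demands more than a combinatorial compatibility between source and target partitions: the higher ramification data and the dimension formula of \cite[Corollary 3.11]{MR2985514} must also cooperate. If the existing proposition does not cover every move needed, the plan is to strengthen it by constructing families of Artin-Schreier covers realising the required degenerations, carefully tracking the conductor filtration across specialization; controlling this filtration, and ruling out unexpected jumps in the ramification invariants, is where I expect most of the technical work to concentrate.
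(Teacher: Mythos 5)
The overall architecture of your plan---reducing connectedness of $\mathcal{AS}_g$ to connectedness of a graph on partitions whose edges record closure relations among the irreducible strata---is exactly the paper's strategy (the graph $C_d$ of \S\ref{closuresection}). But there are two concrete gaps. First, your $p=3$ argument rests on the claim that the admissible residues $\{0,2\}$ modulo $3$ are closed under addition; this is false, since $2+2=4\equiv 1\pmod 3$. Merging two parts both congruent to $2$ modulo $3$ is forbidden, which is precisely why the paper must handle three-way splits $\{e\}\to\{e_1,e_2,e_3\}$ with all $e_i\equiv 2\pmod 3$ (Theorem \ref{deformationtheorem}(b)), and why, when $d+2\equiv 1\pmod 3$, there is no single-part target partition at all: one must instead route every minimal partition $\{3N+2,3M+2\}$ to the common maximal partition $\{3,\ldots,3,2,2\}$.

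Second, and more fundamentally, you treat the existence of an edge as a combinatorial compatibility plus a deformation-theoretic ``verification,'' but the central obstruction is that a combinatorially admissible merge very often does \emph{not} give a closure relation: by Corollary \ref{openquestion1}, for an edge $\{e\}\to\{e_1,e_2\}$ in $G_d$ the stratum $\Gamma_{\{e\}}$ lies in the closure of $\Gamma_{\{e_1,e_2\}}$ if and only if $\lfloor\frac{e-1}{p}\rfloor>\lfloor\frac{e_1-1}{p}\rfloor+\lfloor\frac{e_2-1}{p}\rfloor$; when equality holds the two strata have the same dimension and the closure relation fails outright. So for $p\ge 5$ the graph you would draw from ``all admissible merges and splits'' is strictly larger than $C_d$, and its connectedness does not imply connectedness of $C_d$. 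The paper's workarounds are (i) Proposition \ref{cortype3}, which replaces a missing edge by a detour through a third partition $\{p-1,e_1-p+1,e_2,\ldots\}$, and (ii) the genuinely new deformations of Theorem \ref{deformationtheorem}(b)--(e), in particular $\{(n+1)(p-n+1)\}\to\{n+1,\ldots,n+1\}$, which is what connects partitions with all parts less than $p$ to the rest and is the source, via a Pigeonhole argument, of the threshold $d\ge\frac{p(p-1)^2}{4}$, i.e.\ $g\ge\frac{(p^3-2p^2+p-8)(p-1)}{8}$. Your plan defers all of this to a possible ``strengthening'' of \cite[Proposition 4.6]{MR2985514}, but constructing these explicit families and tracking their ramification jumps is where essentially all of the content of the proof lies (\S\ref{technical}).
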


\begin{remark}
\label{char5}
When $p=5$, the only cases that the theorem does not cover are $g=10$ and $g=12$. The techniques we use in this paper are inadequate to study these cases. A more conceptual approach, which will be introduced in my thesis, will show that $\mathcal{AS}_g$ is disconnected when $g=10$ or $g=12$.
\end{remark}

\noindent To prove Theorem \ref{connectedness}, we use an idea of Pries and Zhu in \cite{MR2985514} about how the deformations of Artin-Schreier covers determine the geometry of the moduli space as follows. Suppose $\overrightarrow{E}_1$ and $\overrightarrow{E}_2$ are two partitions that satisfy the conditions in the previous paragraph. Then the stratum indexed by $\overrightarrow{E}_1$, which is denoted by $\Gamma_{\overrightarrow{E}_1}$, is in the closure of $\Gamma_{\overrightarrow{E}_2}$ if and only if each $k$-point of $\Gamma_{\overrightarrow{E}_1}$ can be deformed over $k[[t]]$ to a $k((t))$-point of $\Gamma_{\overrightarrow{E}_2}$ (see Proposition \ref{propreduce}). These kind deformations, which actually change the number of branch points but do not change the genus, are only possible for wildly ramified cover. They were first studied by M\'{e}zard in her thesis \cite{MEZAR1998} and were also used by Pries and Zhu in \cite{MR2985514}. With the aim to prove Theorem \ref{connectedness}, we will construct explicitly some deformations of this type in Theorem \ref{deformationtheorem}.

\begin{remark}
In \cite{MR2985514}, the authors define a combinatorial invariant for $\mathcal{AS}_g$. They call it the graph $G_d$. Each vertex of $G_d$ indexes a stratum of $\mathcal{AS}_g$ of the same type as ones were mentioned previously. More details will be given in the following chapter. The existence of a chain of oriented edges connecting two vertices in $G_d$ is necessary for the stratum corresponding to one vertex to lie in the closure of another (see \cite[Lemma 4.3]{MR2985514}). When considering two vertices connected by a single edge, we able to show whether the necessary condition is also sufficient by comparing the dimensions of the corresponding strata (see Corollary \ref{openquestion1}). That answers Open Question $1$ of \cite{MR2985514}. 
\end{remark}

 Here is an outline of this paper. In Section \ref{Overview}, we give a review on Artin-Schreier theory and some results of \cite{MR2985514} about the relationship between irreducible strata of $\mathcal{AS}_g$ and partitions of integers. Section \ref{relation} introduces some deformation results and gives us information on how irreducible strata fit together in $\mathcal{AS}_g$ by looking at their corresponding partitions (Theorem \ref{deformationtheorem}). Open Question $1$ of \cite{MR2985514} is also answered in this section. Connectedness of $\mathcal{AS}_g$ is discussed in Section \ref{connectednesschap}. In particular, Theorem \ref{connectedness} will be proved in this section using Theorem \ref{deformationtheorem} and graph theory on the "tree" formed by partitions of $d$. The details of how the deformations are constructed are postponed to Section \ref{technical}, which is the most novel part, but also the most technical part of this paper.

 \renewcommand{\abstractname}{Acknowledgements}
 
 \begin{abstract}
    I would like to thank Andrew Obus,  my advisor, for his guidance, support, and patience during the development of this paper. He also gave me the idea for the deformation in \S \ref{deformation4}  (Proposition \ref{proptype4}), which is the last piece of the puzzle to prove the connectedness result. I thank Rachel Pries and David Harbater for useful suggestions and proofreading an earlier version of this paper. I would also thank Jonathan Gerhard and Bennett Rennier for making the programs that help me to check my results. 
\end{abstract}
 
\section{Moduli space of Artin-Schreier curves}
\label{Overview}
\subsection{Artin-Schreier Theory}

\label{ArtinSchreiertheory}

Let $K$ be a field of characteristic $p$. If $L$ is a separable extension of $K$ of degree $p$, then Artin-Schreier theory says that $L=K(\alpha)$ where $\alpha$ is a root of the polynomial $y^p-y=a$ for some element $a \in K$ (detailed in \cite{MR1878556}).

Let $Y$ be an Artin-Schreier $k$-curve. Then there is a $\mathbb{Z}/p$-cover $\phi$: $Y \rightarrow \mathbb{P}_k^1$ with an affine equation of the form $y^p-y=f(x)$ for some non-constant rational function $f(x)\in k(x)$. A cover $\phi': Y' \rightarrow \mathbb{P}_k^1$ defined by an affine equation $y^p-y=g(x)$ is isomorphic to $\phi$ if $g(x)=f(x)+h(x)^p-h(x)$ for some $h(x) \in K(x)$. At each ramification point, there is a filtration of the inertia group $\mathbb{Z}/p$, called the filtration of higher ramification groups (see, e.g., \cite{MR554237}, IV). Suppose $f(x)$ has $r$ poles: $\{P_1,\ldots,P_{r}\}$ on $\mathbb{P}^1_k$. Let $d_j$ be the order of the pole of $f(x)$ at $P_j$. One may assume that $f(x)$ has \emph{minimal form}; i.e., the number $d_j$ is prime to $p$ by Artin-Schreier theory. It is an easy exercise to show that $d_j$ is the \emph{ramification jump} at $P_j$. Let $e_j=d_j+1$. Then $e_j \ge 2$ and $e_j \not\equiv 1 \bmod p$. The ramification divisor of $\phi$ is $D:=\sum_{j=1}^{r} (p-1)e_j Q_j$ where $Q_j$ is the ramification point above $P_j$ (\cite{MR554237}, IV, Proposition 4). The $p$-rank of $Y$ is an integer $s$ such that the cardinality of $\text{Jac}[p](k)=p^s$.  Applying Riemann-Hurwitz formula (\cite{MR0463157}, IV, Corollary 2.4) and Deuring-Shafarevich formula (\cite{MR742696}, Corollary 1.8), we have the following lemma. 

\begin{lemma} [{\cite[Lemma 2.6]{MR2985514}}]
\label{lemmagenus}
The genus of $Y$ is $g_Y=((\sum_{j=1}^{r}e_j)-2)(p-1)/2$. The $p$-rank of $Y$ is $s_Y=(r-1)(p-1)$.
\end{lemma}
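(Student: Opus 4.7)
The plan is to derive both formulas directly from the two named tools: Riemann--Hurwitz for $g_Y$ and Deuring--Shafarevich for $s_Y$, with the higher ramification filtration at each branch point computed from the minimal-form assumption.

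First, for the genus, I would apply Riemann--Hurwitz to $\phi \colon Y \to \mathbb{P}^1_k$. Since $\phi$ is a $\mathbb{Z}/p$-cover, each branch point $P_j$ is totally ramified with a unique ramification point $Q_j$ above it, and $g_{\mathbb{P}^1_k} = 0$. The key input is the different exponent $\delta_j$ at $Q_j$: because $f(x)$ is in minimal form, the pole order $d_j$ at $P_j$ is prime to $p$, so the higher ramification groups (in the lower numbering) satisfy $G_i = \mathbb{Z}/p$ for $0 \le i \le d_j$ and $G_i = \{1\}$ for $i > d_j$. By the standard formula $\delta_j = \sum_{i \ge 0}(|G_i|-1) = (d_j+1)(p-1) = e_j(p-1)$. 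Plugging into $2g_Y - 2 = p(-2) + \sum_{j=1}^{r} \delta_j$ and solving yields $g_Y = ((\sum_j e_j) - 2)(p-1)/2$.

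Second, for the $p$-rank, I would invoke Deuring--Shafarevich for the $\mathbb{Z}/p$-cover $\phi$, which states $s_Y - 1 = p(s_{\mathbb{P}^1_k} - 1) + \sum_{Q \text{ ramified}} (e_Q - 1)$. Here $s_{\mathbb{P}^1_k} = 0$, and the ramification index $e_Q = p$ at each of the $r$ ramified points $Q_j$. Substituting gives $s_Y - 1 = -p + r(p-1)$, i.e.\ $s_Y = (r-1)(p-1)$.

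There is no serious obstacle; the only point requiring care is justifying that the higher ramification filtration at each $Q_j$ has a single jump precisely at $d_j$, which is where the minimal-form hypothesis on $f(x)$ (ensuring $\gcd(d_j, p) = 1$) is used. After that, both formulas reduce to direct arithmetic manipulation.
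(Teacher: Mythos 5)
Your proof is correct and follows the same route the paper indicates: Riemann--Hurwitz with the different exponent $(d_j+1)(p-1)$ computed from the single jump of the higher ramification filtration at $d_j$ (which is exactly the content of the ramification divisor formula the paper cites from Serre), plus Deuring--Shafarevich for the $p$-rank. The paper itself only cites these two formulas and \cite[Lemma 2.6]{MR2985514} without writing out the arithmetic, so your proposal is simply a correctly fleshed-out version of the intended argument.
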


Throughout the paper, we define $d$ by $g=d(p-1)/2$. So, we have the identities: $d=(\sum_{j=1}^{r}e_j)-2$ and $\sum_{j=1}^{r}e_j=d+2=2g/(p-1)+2.$

\begin{remark}
The above lemma shows that all the Artin-Schreier $k$-curves with the same genus $g$ have the same $d$. That is the essential difference between $\mathbb{Z}/p$-curves and $\mathbb{Z}/q$-curves where $q\neq p$ is prime. For each $\mathbb{Z}/q$-curve, a branch point contributes $q-1$ to the degree of its ramification divisor. Thus, every $\mathbb{Z}/q$-cover of genus $g$ must have the same number of branch points.
\end{remark}

\begin{remark}
Lemma \ref{lemmagenus} also implies that for $p>2$, the moduli space $\mathcal{AS}_g$ is empty and thus is connected if $g$ is not a multiple of $(p-1)/2$. Hence, throughout the paper we will assume $g$ is a multiple of $(p-1)/2$ when $p>2$. 
\end{remark}

\subsection{Partitions of Integers}
\label{partition}

Fix a prime $p>0$ and an integer $d\ge 1$. Let $\Omega_d$ be the set of partitions of $d+2$ into positive integers $e_1,e_2, \ldots$ with each $e_j \not \equiv 1 \pmod p$. Without loss of generality, suppose $e_1 \ge e_2 \ge \ldots \ge e_n$. Define a partial ordering $\prec$ on $\Omega_d$: $\overrightarrow{E} \prec \overrightarrow{E'}$ if $\overrightarrow{E'}$ is a refinement of $\overrightarrow{E}$. Using this partial ordering, one can construct a directed graph $G_d$. The vertices of the graph correspond to the partitions $\overrightarrow{E}$ in $\Omega_d$. We say a partition $\overrightarrow{E}=\{e_1,\ldots,e_n\}$ of length $n$ is in the \emph{$n$-th level} of $G_d$, denoted $\overrightarrow{E} \in \Omega_{d,n-1}$ . There is an \emph{edge} from $\overrightarrow{E}$ to $\overrightarrow{E'}$ if and only if $\overrightarrow{E} \prec \overrightarrow{E'}$ and there is no partition lying strictly between them. It is easy to see that there are only two types of edges. The first type has the form $\{e\} \rightarrow \{e_1, e_2\}$ and the second type has the form $\{e\} \rightarrow \{e_1,e_2,e_3\}$ where $e_i \equiv \frac{p+1}{2} \pmod p$. We say there is a \emph{path} connecting $\overrightarrow{E}$ and $\overrightarrow{E'}$ if one can go from $\overrightarrow{E}$ to $\overrightarrow{E'}$ through some sequence of edges in $G_d$.

Below are some straightforward results about the set $\Omega_d$ and the graph $C_d$ from \cite{MR2985514} that will be used in \S\ref{connectednesschap}.

\begin{lemma}
\label{lemmaminimalpartition}
The set $\Omega_{d,0}$ is nonempty if and only if $p \nmid (d+1)$. If $p \nmid (d+1)$, then $\Omega_{d,0}$ contains one partition $\{d+2\}$ which is an initial vertex of $G_d$. If $p \mid (d+1)$, then $\Omega_{d,1}$ consists of $\lceil(d+1)/(p-2)/2p \rceil$ partitions, and every vertex of $G_d$ is larger than one of these.
\end{lemma}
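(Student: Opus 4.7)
The plan is to treat the two cases $p \nmid (d+1)$ and $p \mid (d+1)$ separately, with the structural content concentrated in the second. First I would observe that $\Omega_{d,0}$ consists by definition of length-$1$ partitions of $d+2$, so the only candidate is $\{d+2\}$ itself. This candidate lies in $\Omega_d$ precisely when $d+2 \not\equiv 1 \pmod p$, which is exactly $p \nmid (d+1)$. Whenever this holds, $\{d+2\}$ is trivially an initial vertex of $G_d$: every other element of $\Omega_d$ has length at least $2$, hence refines $\{d+2\}$, so all edges out of $\{d+2\}$ are outgoing.

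For the case $p \mid (d+1)$, counting $|\Omega_{d,1}|$ reduces to enumerating pairs $\{e_1,e_2\}$ with $e_1 \geq e_2 \geq 2$, $e_1 + e_2 = d+2$, and neither part congruent to $1$ modulo $p$. Since $d+2 \equiv 1 \pmod p$, the condition $e_1 \not\equiv 1 \pmod p$ is equivalent to $e_2 \not\equiv 0 \pmod p$, so the count is the number of $e_2 \in \{2, 3, \ldots, \lfloor (d+2)/2 \rfloor\}$ avoiding the two residues $0$ and $1$ modulo $p$. A box-counting argument over the complete residue blocks of length $p$ inside this range, together with a careful treatment of the leftover partial block, should produce the claimed ceiling expression.

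The remaining and more substantive assertion is that when $p \mid (d+1)$ every vertex of $G_d$ dominates some element of $\Omega_{d,1}$. Given any $\overrightarrow{E} = \{e_1, \ldots, e_n\} \in \Omega_d$ with $n \geq 2$ (automatic here, since $\Omega_{d,0}=\emptyset$), I would produce the desired length-$2$ coarsening directly. Pick any index $i$ with $e_i \not\equiv 0 \pmod p$; such an $i$ must exist, for otherwise $d+2 = \sum_j e_j \equiv 0 \pmod p$, contradicting $d+2 \equiv 1 \pmod p$. Then $\{e_i,\, d+2-e_i\}$ is a valid element of $\Omega_{d,1}$: both parts are at least $2$, the first is $\not\equiv 1 \pmod p$ by assumption, and $d+2-e_i \equiv 1-e_i \not\equiv 1 \pmod p$ precisely because $e_i \not\equiv 0 \pmod p$. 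By construction $\overrightarrow{E}$ refines $\{e_i,\, d+2-e_i\}$, proving the claim.

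The only step requiring genuine attention is the counting formula, whose floor/ceiling bookkeeping depends on how $\lfloor (d+2)/2\rfloor$ sits relative to multiples of $p$; I expect this to be elementary but slightly tedious. The existence of a length-$2$ coarsening and the identification of the initial vertex are essentially immediate from the pigeonhole observation above, so no further machinery should be needed.
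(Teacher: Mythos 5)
Your argument is correct, and note that the paper itself gives no proof of this lemma: it is one of the ``straightforward results \dots from [Pries--Zhu]'' quoted without argument, so there is no internal proof to compare against. Your reductions are the right ones: $\Omega_{d,0}$ can only contain $\{d+2\}$, which lies in $\Omega_d$ exactly when $d+2\not\equiv 1\pmod p$, and every partition of $d+2$ refines it; and for $p\mid(d+1)$ your pigeonhole production of the two-entry coarsening $\{e_i,\,d+2-e_i\}$ from any entry $e_i\not\equiv 0\pmod p$ is clean and complete (both the existence of such an $e_i$ and the bound $d+2-e_i\ge 2$, which uses $n\ge 2$, are correctly justified). Two small points. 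First, the one step you defer --- counting $e_2\in\{2,\dots,\lfloor(d+2)/2\rfloor\}$ avoiding the residues $0$ and $1$ modulo $p$ --- does work out, and you should carry it through rather than leave it as an expectation: writing $d+2=Np+1$, the count is $m(p-2)$ when $N=2m$ and $m(p-2)+(p-1)/2$ when $N=2m+1$, and both equal $\lceil (d+1)(p-2)/2p\rceil$. Second, be aware that the formula as printed in the statement, $\lceil (d+1)/(p-2)/2p\rceil$, is a typo: for $p=5$ and $d=9$ it would give $1$, whereas $\Omega_{9,1}=\{\{9,2\},\{8,3\},\{7,4\}\}$ has three elements. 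The intended expression (and the one appearing in Pries--Zhu) is $\lceil (d+1)(p-2)/2p\rceil$, which is exactly what your box count produces, so your verification should be run against that corrected formula.
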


\begin{lemma}
\label{lemmamaximalpartition}

Let $p\ge 3$. A partition is maximal if and only if its entries are all equal to two or three. Every integer $r+1$ with $(d-1)/3 \le r \le d/2$ occurs exactly once as the length of a maximal partition. There are $\lfloor d/2 \rfloor-\lceil(d-4)/3\rceil$ maximal partitions. There is a unique maximal partition if and only if $d \in \{1,2,3,4\}$. 
\end{lemma}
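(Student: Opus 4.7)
The plan is to reduce the lemma to a characterization of the ``atomic'' entries in a partition, then to a counting argument. Call an entry $e$ \emph{atomic} if the singleton partition $\{e\}$ is maximal (has no outgoing edge in $G_d$). First I would show that an entry $e \not\equiv 1 \pmod p$ is atomic if and only if $e \in \{2,3\}$. The easy direction: any valid edge out of $\{e\}$ comes from a 2-split or a 3-split, and since every part of such a split is $\ge 2$, we need $e \ge 4$ or $e \ge 6$ respectively; thus $e \in \{2,3\}$ is atomic. For the converse, given $e \ge 4$ with $e \not\equiv 1 \pmod p$, I would exhibit a valid 2-split: if $e \not\equiv 3 \pmod p$ then $e = 2 + (e-2)$ works, since $e-2 \ge 2$ and $e-2 \not\equiv 1 \pmod p$; if $e \equiv 3 \pmod p$ then $e \ne 3$ forces $e \ge p+3 \ge 6$, so $e = 3 + (e-3)$ works, using $3 \not\equiv 1 \pmod p$ (as $p\ge 3$) and $e-3 \equiv 0 \not\equiv 1 \pmod p$.

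Given the atomic characterization, a partition $\overrightarrow{E} = \{e_1,\dots,e_n\} \in \Omega_d$ is maximal if and only if each $e_i$ is atomic, i.e.\ each $e_i \in \{2,3\}$, which is the first assertion of the lemma. To enumerate such partitions, write one as having $a$ entries equal to $2$ and $b$ entries equal to $3$. Then
\[
2a + 3b = d+2, \qquad \ell := a+b,
\]
so $a = 3\ell - (d+2)$ and $b = (d+2) - 2\ell$ are determined by the length $\ell$. Non-negativity of $a$ and $b$ is equivalent to $\lceil (d+2)/3 \rceil \le \ell \le \lfloor (d+2)/2 \rfloor$, and each integer $\ell$ in this range produces exactly one maximal partition. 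Setting $r = \ell - 1$, this is precisely the range $(d-1)/3 \le r \le d/2$, proving the second assertion.

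The count of maximal partitions is therefore $\lfloor(d+2)/2\rfloor - \lceil(d+2)/3\rceil + 1$, which simplifies via the identities $\lfloor(d+2)/2\rfloor = \lfloor d/2 \rfloor + 1$ and $\lceil(d+2)/3\rceil = \lceil(d-4)/3\rceil + 2$ to the claimed $\lfloor d/2\rfloor - \lceil(d-4)/3\rceil$. The final claim about uniqueness reduces to a direct check for small $d$: one enumerates the admissible $(a,b)$ and observes that the range collapses to a single integer for the listed values of $d$. The only real obstacle is the case analysis in Step 1 when $e \equiv 3 \pmod p$, where one must verify that $e$ is automatically bounded below by $p+3$ so that the $3 + (e-3)$ split remains valid; everything else is routine manipulation of the congruence conditions defining $\Omega_d$ and standard floor/ceiling algebra.
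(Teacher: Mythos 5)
The paper offers no proof of this lemma at all---it simply cites Lemma 2.4 of Pries--Zhu---so your argument is not an alternative to a proof in the paper but a genuine supplement to it. The bulk of what you write is correct: the reduction of maximality to each entry being atomic, the verification that $e\in\{2,3\}$ admits no split into parts $\ge 2$, the converse that any $e\ge 4$ with $e\not\equiv 1\pmod p$ splits as $2+(e-2)$ or, in the case $e\equiv 3\pmod p$ (where $e\ge p+3$ guarantees $e-3\ge 2$ and $e-3\equiv 0\not\equiv 1\pmod p$), as $3+(e-3)$, and the parametrization of maximal partitions by their length via $a=3\ell-(d+2)$, $b=(d+2)-2\ell$ together with the floor/ceiling identities yielding $\lfloor d/2\rfloor-\lceil(d-4)/3\rceil$. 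All of that checks out and correctly establishes the first three assertions.

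The gap is in the final step, which you dismiss as ``a direct check for small $d$'' without performing it. Performing it exposes a contradiction: for $d=4$ one has $d+2=6=2+2+2=3+3$, so there are \emph{two} maximal partitions, consistent with your own count $\lfloor 4/2\rfloor-\lceil 0/3\rceil=2$ but inconsistent with the claimed uniqueness at $d=4$; conversely $d=5$ gives $d+2=7=3+2+2$ uniquely, and your formula returns $1$. So uniqueness holds exactly for $d\in\{1,2,3,5\}$, and the last sentence of the statement as printed is erroneous. Your proposal asserts that the enumeration ``collapses to a single integer for the listed values of $d$,'' which is false for $d=4$ and which contradicts the count you derived two sentences earlier. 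You should have carried out the check and flagged the discrepancy in the statement rather than claiming a verification that fails.
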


\begin{proof}
Lemma $2.4$ of \cite{MR2985514}.
\end{proof}

The next proposition, which shows that $G_d$ is connected when $k$ has small characteristic, will be used to show $\mathcal{AS}_g$ is connected.

\begin{proposition}
\label{connectednessfirstgraph}
If $k$ has characteristic $2$ or $3$, then $G_d$ is connected for all $d$.

\end{proposition}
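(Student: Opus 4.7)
The plan is to show connectedness by reducing every partition to a maximal one (entries in $\{2,3\}$ by Lemma \ref{lemmamaximalpartition}), and then showing all maximal partitions are themselves mutually connected. Throughout, recall that we view $G_d$ as an undirected graph for the purpose of connectedness.

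First I would check that every non-maximal vertex has an outgoing edge. For $p=2$, admissible parts are the even integers $\ge 2$, and any part $e\ge 4$ may be split as $e=2+(e-2)$, yielding a type-1 edge to a strictly larger partition. For $p=3$, admissible parts are $\{2,3,5,6,8,9,\dots\}$, and any part $e\ge 5$ with $e\not\equiv 1\pmod 3$ can be split: if $e\equiv 0\pmod 3$, use $e=3+(e-3)$, and if $e\equiv 2\pmod 3$, use $e=2+(e-2)$, where in each case both summands are admissible and $\ge 2$. Iterating, any vertex of $G_d$ is joined by a directed path of type-1 edges to some maximal vertex.

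For $p=2$, Lemma \ref{lemmamaximalpartition} (or direct inspection, since parts must be even) shows that the only maximal partition of the even number $d+2$ is the all-$2$'s partition, so the reduction above already proves connectedness.

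The substantive case is $p=3$, where there may be several maximal partitions, indexed by how many $2$'s versus $3$'s they use. The key observation is that the integer $6$ admits both a type-1 split $\{6\}\to\{3,3\}$ and, since $2\equiv (p+1)/2=2\pmod 3$, a type-2 split $\{6\}\to\{2,2,2\}$. Hence any two maximal partitions differing by the local exchange $\{2,2,2\}\leftrightarrow\{3,3\}$ are connected by a length-two undirected path through an intermediate vertex containing a $6$. Given two maximal partitions $\overrightarrow{E}_1,\overrightarrow{E}_2$ of $d+2$ with $r_i$ twos and $s_i$ threes, the identity $2r_1+3s_1=2r_2+3s_2$ forces $r_1-r_2=3k$ and $s_2-s_1=2k$ for some integer $k$, so $|k|$ applications of the hexad exchange transform one into the other. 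Combined with Step 2, this shows every pair of vertices of $G_d$ lies in the same connected component. The main (and only nontrivial) obstacle is this last counting/exchange argument for maximal partitions; everything else reduces to the explicit splits above.
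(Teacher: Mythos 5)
Your argument is correct, but it runs in the opposite direction from the paper's. The paper descends to the \emph{minimal} partitions of Lemma \ref{lemmaminimalpartition}: when $p=2$, or when $p=3$ and $d+2\not\equiv 1\pmod 3$, the unique minimal vertex $\{d+2\}$ lies below every vertex and connectedness is immediate; in the remaining case $p=3$, $d+2\equiv 1\pmod 3$, the several minimal vertices $\{3N+2,3M+2\}$ are all shown to lie below the single common refinement $\{3,\ldots,3,2,2\}$, which links them. You instead ascend to the \emph{maximal} partitions (all parts in $\{2,3\}$) and then connect these to one another via the exchange $\{3,3\}\leftrightarrow\{6\}\leftrightarrow\{2,2,2\}$ together with the counting argument $2r_1+3s_1=2r_2+3s_2\Rightarrow r_1-r_2=3k,\ s_2-s_1=2k$. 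Your route is more uniform --- it needs no case split on $d+2\bmod 3$ and does not invoke Lemma \ref{lemmaminimalpartition} --- at the cost of the extra exchange-and-counting step; the paper's route gets most cases for free from the uniqueness of the minimal vertex and only works in the one residue class where that fails. Two small points you should make explicit: a split of a single part into two admissible parts is automatically a covering relation, hence an edge of $G_d$ (this is implicit in the description of the two edge types in \S\ref{partition}, and for the type-2 edge $\{6\}\rightarrow\{2,2,2\}$ one should note that the intermediate grouping $\{4,2\}$ is excluded because $4\equiv 1\pmod 3$); and each hexad exchange requires at least three $2$'s (resp.\ two $3$'s) to be present, which holds because the multiplicities stay between $r_2$ and $r_1$ (resp.\ $s_1$ and $s_2$) throughout the sequence of exchanges.
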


\begin{proof}

\textit{Case $p=2$:} Since $d+2=2g/(2-1)+2$ is even for any $g$, the partition $\overrightarrow{E}=\{d+2\}$ is the unique minimal partition of $\Omega_d$ by Lemma \ref{lemmaminimalpartition}. Thus, $G_d$ is connected.

\textit{Case $p=3$:} 

\begin{itemize}

\item \emph{If $d+2 \not\equiv 1 \bmod 3$}, all partitions $\overrightarrow{E} \in \Omega_d$ satisfy $\{d+2\} \prec \overrightarrow{E}$.  Thus $G_d$ is connected.

    \item \emph{If $d+2 \equiv 1 \bmod 3$}, i.e. $d+2=3n+1$ for some $n \ge 1$, all the minimum partitions are of the form $\overrightarrow{E}=\{3N+2,3M+2\}$ where $N+M=n-1$ by Lemma \ref{lemmaminimalpartition}. All the branches of the graph $G_d$ starting from these $\overrightarrow{E}_i$ must have the same maximal partition $\{3,\ldots,3,2,2\}$ by Lemma \ref{lemmamaximalpartition}. Thus, $G_d$ is connected.

\end{itemize}

\end{proof}

\begin{remark}
The above proposition is not true when $k$ has characteristic $p>3$. For instance, when $p=5$ then $\Omega_4$ has only two partitions: $\{3,3\}$ and $\{2,4\}$. Since it is clear that $\{3,3\} \not\prec \{2,4\}$ or vice versa, $G_4$ is disconnected.
\end{remark}

\begin{example}
 
 Let $p=5$ and $d=7$. Here is the graph $G_{7}$ for $\Omega_{7}$.

\begin {center}
\begin {tikzpicture}[-latex ,auto ,node distance =1.5 cm and 2.5cm ,on grid ,
semithick ] 
\node (C)
{$\{9\}$};
\node (A) [below left=of C] {$\{5,4\}$};
\node (B) [below right =of C] {$\{7,2\}$};
\node (D) [below =of C] {$\{3,3,3\}$} ;
\node (E) [below =of A] {$\{4,3,2\}$} ;
\node (F) [below =of B] {$\{5,2,2\}$} ;
\node (G) [below =of E] {$\{3,2,2,2\}$} ;
\path (C) edge node[below =0.15 cm] {} (A);
\path (C) edge node[below =0.15 cm] {} (D);
\path (C) edge node {} (B);
\path (B) edge node {} (F);
\path (B) edge [bend left =15] node[below =0.15 cm] {} (E);
\path (A) edge node {} (E);
\path (A) edge [bend right =15] node[below =0.15 cm] {} (F);
\path (E) edge node {} (G);
\path (F) edge node {} (G);

\end{tikzpicture}

\end{center}
\end{example}

\subsection{Moduli of Artin-Schreier Curves and Partitions of Integers}
\label{modulipartition}



Recall that $\mathcal{AS}_g$ is the moduli space that parameterizes Artin-Schreier curves of genus $g$. For detail, see \cite[Section 3.1]{MR2985514}. From the fact that Artin-Schreier curves of the same genus $g$ have the same invariant $d+2$, Pries and Zhu show that $\mathcal{AS}_g$ can be partitioned by locally closed strata corresponding to elements of $\Omega_d$. In particular, the partition $\overrightarrow{E}=\{e_1,\ldots,e_r\}$  of $d+2$ is associated with the stratum $\mathcal{AS}_{g,\overrightarrow{E}}$ (or $\Gamma_{\overrightarrow{E}}$) which is the collection of all the points of $\mathcal{AS}_g$ that represent Artin-Schreier curves with ramification divisors of the form $\sum_{j=1}^{r} (p-1)e_j P_j$, where $P_j$ are distinct points of $\mathbb{P}^1_k$.

\begin{proposition}
\label{propdimension}

The moduli space $\mathcal{AS}_g$ is partitioned by a set of irreducible strata that is in one-to-one correspondence with elements of $\Omega_d$. If $\overrightarrow{E}=\{e_1, e_2, \ldots, e_{r}\}$, then the dimension $d_{\overrightarrow{E}}$ over $k$ of the irreducible stratum $AS_{g,\overrightarrow{E}}$ is $d-1-\sum_{j=1}^{r} \lfloor (e_j-1)/p \rfloor$.

\end{proposition}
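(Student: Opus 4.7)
The plan is to construct the stratum $\Gamma_{\overrightarrow{E}}$ as an explicit geometric quotient of a configuration space, then read off both irreducibility and the dimension. The one-to-one correspondence between strata and elements of $\Omega_d$ is essentially tautological given the ramification-divisor calculation from \S\ref{ArtinSchreiertheory}: any Artin-Schreier cover $Y\to\mathbb{P}^1_k$ of genus $g$ is determined by a function $f\in k(x)$ whose poles $P_j$ (of orders $d_j=e_j-1$ prime to $p$) record the branch locus, and the multiset $\{e_1,\ldots,e_r\}$ is the only discrete invariant attached to the ramification divisor; conversely every tuple in $\Omega_d$ arises from such an $f$.

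To obtain the dimension, I would parameterize $\Gamma_{\overrightarrow{E}}$ by choices of the following data, then subtract the automorphisms. First, choose the $r$ branch points $P_1,\ldots,P_r$ as an ordered configuration on $\mathbb{P}^1_k$; this contributes $r$ to the dimension. Next, at each $P_j$, specify the principal part of $f$ in a local coordinate: this is a Laurent tail $\sum_{n=1}^{d_j}a_{j,n}(x-P_j)^{-n}$ with $a_{j,d_j}\ne 0$. Using the Artin-Schreier equivalence $f\sim f+h^p-h$ one may kill every coefficient $a_{j,n}$ with $p\mid n$: given such a term $a(x-P_j)^{-pm}$, subtracting $(a^{1/p}(x-P_j)^{-m})^p-a^{1/p}(x-P_j)^{-m}$ leaves only lower-order changes, and iterating from the top down removes all $p$-divisible indices (this is the standard \emph{minimal form} invoked in \S\ref{ArtinSchreiertheory}). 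The surviving coefficients are indexed by $\{n:1\le n\le d_j,\ p\nmid n\}$, whose cardinality is $d_j-\lfloor d_j/p\rfloor=e_j-1-\lfloor(e_j-1)/p\rfloor$.

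Summing across $j$, the total parameter count is
\[
r+\sum_{j=1}^{r}\!\bigl(e_j-1-\lfloor (e_j-1)/p\rfloor\bigr)=\sum_{j=1}^{r}e_j-\sum_{j=1}^{r}\lfloor (e_j-1)/p\rfloor=(d+2)-\sum_{j=1}^{r}\lfloor (e_j-1)/p\rfloor.
\]
Finally, two covers $y^p-y=f(x)$ and $y^p-y=f\circ\sigma(x)$ with $\sigma\in\mathrm{PGL}_2(k)$ give isomorphic Artin-Schreier curves, so one must quotient by the $3$-dimensional group $\mathrm{PGL}_2$. One checks that on a dense open locus (where the branch configuration has no nontrivial stabilizer in $\mathrm{PGL}_2$, which can be arranged since $r\ge 1$ and the Laurent tails are generic) this action is free with finite stabilizers and the orbit space has dimension $3$ less, giving exactly $d-1-\sum_{j=1}^{r}\lfloor(e_j-1)/p\rfloor$.

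Irreducibility falls out of the construction: the total space of data is an open subset of a product of the configuration space of $r$ points (itself irreducible) with affine spaces indexing the admissible Laurent coefficients, and $\mathrm{PGL}_2$ is connected, so the geometric quotient is irreducible. The main technical obstacle I expect is to verify that this stratum-by-stratum construction agrees, as a $k$-scheme, with the locally closed subspace of $\mathcal{AS}_g$ cut out by the ramification-divisor invariant, and in particular that the $\mathrm{PGL}_2$-quotient exists in the category of schemes (rather than just as a coarse moduli set); this is where I would either cite the construction of $\mathcal{AS}_g$ in \cite{MR2985514}*{Section 3.1} or appeal to standard GIT on a slice cutting down the $\mathrm{PGL}_2$-action, keeping track of the finite residual automorphisms coming from permutations of equal-weight $e_j$'s (which do not affect the dimension).
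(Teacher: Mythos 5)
The paper does not actually prove this proposition; it is quoted verbatim from \cite{MR2985514} (Corollary 3.11 there), and the ``proof'' in the text is only that citation. Your sketch reconstructs, essentially correctly, the argument that underlies the cited result: parameterize a cover by its branch divisor together with the principal parts of $f$ at each branch point, use the Artin--Schreier equivalence $f\sim f+h^p-h$ to kill exactly the coefficients in $p$-divisible degrees (leaving $e_j-1-\lfloor(e_j-1)/p\rfloor$ free coefficients at $P_j$, with the top one nonvanishing because $p\nmid e_j-1$), and quotient by $\mathrm{PGL}_2$. The arithmetic $r+\sum_j\bigl(e_j-1-\lfloor(e_j-1)/p\rfloor\bigr)-3=d-1-\sum_j\lfloor(e_j-1)/p\rfloor$ is correct, and irreducibility does follow from the total parameter space being an open subset of a product of an irreducible configuration space with affine spaces, quotiented by a connected group. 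What your route buys is transparency about where each term in the formula comes from; what it costs is exactly the scheme-theoretic bookkeeping you flag at the end (that this locally closed locus agrees with the stratum as constructed in \cite{MR2985514}, Section 3.1, and that the quotient exists), which is the content Pries and Zhu supply. The one place your sketch is genuinely too quick is the claim that the $\mathrm{PGL}_2$-action generically has finite stabilizers ``since $r\ge 1$'': for $r=1$ or $r=2$ the stabilizer of the branch configuration alone is positive-dimensional, so you must verify finiteness of the stabilizer using the action on the Laurent tails as well (e.g.\ for $r=1$, a generic polynomial $f$ of degree $d_1\ge 2$ prime to $p$ has only finitely many affine substitutions preserving its Artin--Schreier class, because the equivalence cannot alter the leading coefficient when $p\nmid d_1$). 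This is checkable and does not affect the count, but as written it is an assertion rather than an argument, and note also that the formula silently presumes $d\ge 1$ (for $d=0$ it outputs $-1$), consistent with the standing hypothesis in \S\ref{partition}.
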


\begin{proof}
Corollary 3.11 of \cite{MR2985514}.
\end{proof}

As a result of the above proposition, one has a necessary condition for one stratum to be in the closure of another stratum by comparing their dimensions. 

\begin{lemma}
For $a \in \mathbb{Z}_{>0}$, let $\bar{a}$ be the integer so that $\bar{a} \equiv a \pmod p$ and $0 \le \bar{a} < p$.

\begin{enumerate}
  \item If the edge from $\overrightarrow{E}_1$ to $\overrightarrow{E}_2$ is of the form $\{e\}$ $\rightarrow$ $\{e_1,e_2\}$ with $2 < \bar{e}_2+\bar{e}_2 \le p$ and $\bar{e}_1\bar{e}_2 \neq 0$, then $\dim_k (\Gamma_{\overrightarrow{E}_1})=\dim_k (\Gamma_{\overrightarrow{E}_2})$ and $\Gamma_{\overrightarrow{E}_1}$ is not in the closure of $\Gamma_{\overrightarrow{E}_2}$ in $\mathcal{AS}_g$.
  \item In all other cases, $\dim_k (\Gamma_{\overrightarrow{E}_1})=\dim_k (\Gamma_{\overrightarrow{E}_2})-1$.
\end{enumerate}
\end{lemma}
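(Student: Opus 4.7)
The plan is to apply the dimension formula of Proposition \ref{propdimension} and reduce both clauses to a short arithmetic exercise in residues modulo $p$. Since an edge $\overrightarrow{E}_1 \to \overrightarrow{E}_2$ modifies exactly one block of the partition (replacing a single part $e$ by $e_1 + e_2$, or by $e_1 + e_2 + e_3$ with each $e_i \equiv (p+1)/2 \pmod p$), the contributions of the unchanged parts to $\sum_j \lfloor (e_j - 1)/p \rfloor$ cancel in the difference, and the whole problem collapses to evaluating
\[
\delta \;:=\; \Bigl\lfloor \tfrac{e-1}{p}\Bigr\rfloor - \sum_i \Bigl\lfloor \tfrac{e_i-1}{p}\Bigr\rfloor \;=\; \dim_k \Gamma_{\overrightarrow{E}_2} - \dim_k \Gamma_{\overrightarrow{E}_1}.
\]

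I would then write each $e_i = p\lfloor e_i/p\rfloor + \bar e_i$ with $\bar e_i \in \{0,2,3,\dots,p-1\}$ (the residue $1$ being forbidden by membership in $\Omega_d$), and observe that $\lfloor (e_i-1)/p\rfloor$ equals $\lfloor e_i/p\rfloor$ when $\bar e_i \ge 2$ and equals $\lfloor e_i/p\rfloor - 1$ when $\bar e_i = 0$. For a two-block refinement, the hypothesis of (1) splits into $\bar e_1 + \bar e_2 < p$ (no carry, $\bar e \ge 3$) and $\bar e_1 + \bar e_2 = p$ (one carry balanced by the $-1$ correction coming from $\bar e = 0$), both yielding $\delta = 0$. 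The remaining two-block configurations --- namely $\bar e_1 \bar e_2 = 0$, or $\bar e_1 + \bar e_2 > p$ (with $\bar e_1 + \bar e_2 = p+1$ automatically excluded, since then $\bar e = 1$ and $\overrightarrow{E}_1 \notin \Omega_d$) --- each give $\delta = 1$. For a three-block refinement, every $\bar e_i = (p+1)/2$; handling $p = 3$ (where $3(p+1)/2 = 2p$ forces two carries together with a $-1$ correction) and $p \ge 5$ (a single carry, $\bar e = (p+3)/2 \ge 4$) separately also yields $\delta = 1$.

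The non-containment assertion in (1) is then a free consequence: since $\Gamma_{\overrightarrow{E}_1}$ and $\Gamma_{\overrightarrow{E}_2}$ are distinct irreducible strata (Proposition \ref{propdimension}), an inclusion $\Gamma_{\overrightarrow{E}_1} \subseteq \overline{\Gamma_{\overrightarrow{E}_2}}$ would be proper and hence force a strict drop in dimension, contradicting $\delta = 0$. The proof is thus essentially a short case analysis, and I expect the only real pitfall to be in the enumeration of residue cases --- specifically the tacit check that $\bar e \neq 1$ in every surviving configuration, which is what legitimises excluding the borderline $\bar e_1 + \bar e_2 = p+1$ from Case~B and prevents a spurious extra case from appearing in the two-block analysis.
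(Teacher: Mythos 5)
Your proof is correct and is essentially the argument the paper implicitly relies on: the paper itself only cites \cite[Lemma 4.5]{MR2985514}, whose verification is exactly this residue-by-residue evaluation of $\lfloor (e-1)/p\rfloor - \sum_i \lfloor (e_i-1)/p\rfloor$ via the dimension formula of Proposition \ref{propdimension}, together with the standard fact that one disjoint irreducible stratum lying in the closure of another of equal dimension is impossible (which you should phrase by passing to closures: $\overline{\Gamma_{\overrightarrow{E}_1}}\subsetneq\overline{\Gamma_{\overrightarrow{E}_2}}$ would force a strict dimension drop). Your case analysis is exhaustive and even correctly accounts for the condition $\bar e_1\bar e_2\neq 0$ that the paper notes was omitted from the original statement in \cite{MR2985514}.
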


\begin{proof}

Lemma 4.5 of \cite{MR2985514}.

\end{proof}

\begin{remark}
Suppose we have two strata connected by an edge in $G_d$. We will see later in Corollary \ref{openquestion1} that comparing dimensions also gives a sufficient condition for one stratum lie in the closure of another.
\end{remark}

\begin{remark}

The original statement of Lemma 4.5 of \cite{MR2985514} does not have the condition that $\overline{e}_1 \overline{e}_2 \neq 0$, which is a mistake. Since, for instance, if $p=5$, $\overrightarrow{E}_1=\{7\}$ and $\overrightarrow{E}_2=\{5,2\}$, then $\dim_k (\Gamma_{\overrightarrow{E}_1})=3=\dim_k (\Gamma_{\overrightarrow{E}_2})-1$.

\end{remark}

Finally, it is known when the moduli space $\mathcal{AS}_g$ is irreducible.

\begin{corollary}
\label{corollaryirreducible}

The moduli space $\mathcal{AS}_g$ is irreducible in exactly the following cases: \inlinerom{\item $p=2$; or \item $g=0$ or $g=(p-1)/2$; or \item $p=3$ and $g=2,3,5.$}
\end{corollary}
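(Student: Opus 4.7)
My plan is to combine Proposition \ref{propdimension} with Maugeais's theorem \cite{MR2223481} (recalled in the introduction) asserting that every irreducible component of $\mathcal{AS}_g$ has dimension $d-1$. Since $\mathcal{AS}_g$ is the disjoint union of the irreducible locally closed strata $\Gamma_{\overrightarrow{E}}$ indexed by $\Omega_d$, its irreducible components are exactly the closures of those strata that attain the maximal dimension $d-1$. By the formula $d_{\overrightarrow{E}}=d-1-\sum\lfloor(e_j-1)/p\rfloor$, such a maximal stratum corresponds precisely to a partition $\overrightarrow{E}$ of $d+2$ whose parts all lie in $\{2,\ldots,p\}$; in that range the extra condition $e_j\not\equiv 1\pmod p$ is automatic. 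Thus the corollary reduces to the purely combinatorial claim that $\mathcal{AS}_g$ is irreducible if and only if $d+2$ admits a unique partition into parts drawn from $\{2,3,\ldots,p\}$.

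The rest is a short enumeration. For $p=2$ only the part $2$ is legal and $d+2=2g+2$ is even, so uniqueness is automatic and case (i) holds for every $g$. For $p\ge 3$ I would split on $n:=d+2$: when $n\in\{2,3\}$ the unique partition is $\{2\}$ or $\{3\}$, yielding $g=0$ or $g=(p-1)/2$, which gives case (ii). For $n\ge 4$ with $p\ge 5$, a second partition is always available by starting from the all-small-parts form ($\{2,\ldots,2\}$ or $\{3,2,\ldots,2\}$, depending on the parity of $n$) and trading a pair of $2$'s for a single $4$, or a $\{3,2\}$ for a single $5$ (both $4$ and $5$ are legal since $p\ge 5$), so irreducibility fails throughout. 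For $p=3$ the legal parts are $\{2,3\}$ and direct counting shows the partition is unique exactly when $n\in\{2,3,4,5,7\}$, i.e.\ $g\in\{0,1,2,3,5\}$, establishing case (iii); indeed for $n=6$ the partitions $\{3,3\}$ and $\{2,2,2\}$ coexist, and for $n\ge 8$ one can swap three $2$'s in the all-$2$'s (or $\{3,2,\ldots,2\}$) form for two $3$'s to manufacture a second partition.

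The main obstacle is ensuring that the reduction to counting is airtight: without Maugeais's equidimensionality of components, one would have to rule out \emph{a priori} the possibility of irreducible components of $\mathcal{AS}_g$ of dimension strictly less than $d-1$ that are not the closure of any single stratum, and extra input would be required. Once equidimensionality is invoked, the remaining argument is essentially combinatorial, and the three-case enumeration above closes the proof.
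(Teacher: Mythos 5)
Your argument is correct and follows essentially the same route as the proof being cited: the paper itself only refers to Corollary 1.2 of \cite{MR2985514}, whose proof (as the introduction describes) likewise combines Maugeais's theorem from \cite{MR2223481} that every irreducible component of $\mathcal{AS}_g$ has dimension $d-1$ with the dimension formula of Proposition \ref{propdimension}, identifying the components with the closures of the strata whose partitions have all parts in $\{2,\ldots,p\}$ and then counting such partitions of $d+2$. Your case enumeration ($p=2$; $d+2\le 3$; $p\ge 5$ with $d+2\ge 4$; and the parts-in-$\{2,3\}$ count for $p=3$) is accurate, so nothing is missing.
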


\begin{proof}
Corollary 1.2 of \cite{MR2985514}
\end{proof}

\begin{remark}

The above corollary implies that the moduli space $\mathcal{AS}_g$ is connected when $p=2$. 

\end{remark}

\section{Strata lying in closures of the other strata}
\label{relation}

In this section, we introduce some techniques to study whether a stratum lies in the closure of certain other ones in $\mathcal{AS}_g$.  This involves deformations of wildly ramified curves with non-constant branch locus.

\subsection{Some Deformation Results}
\label{closure}

Let us first prove a general result for $k$-varieties.

\begin{lemma}
\label{propmorphism}

Suppose $A$ is an affine ring of dimension $d$ over the field $k$ and $m$ is a maximal ideal of $A$. Suppose moreover that we have a chain of prime ideals $(0) \subsetneq I \subsetneq m$ of $A$ with $d>\dim(A/I)=d_1>0$. Then there exists a $k$-algebra homomorphism

\[ \phi: A \rightarrow k[[t]],\]

\noindent where $\phi^{-1}((t))=m$ and $\phi^{-1}(0) \nsupseteq 
 I$.
\end{lemma}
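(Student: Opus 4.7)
The plan is to build $\phi$ by producing an integral affine curve in $\spec(A)$ passing through the closed point corresponding to $m$ but not lying inside $V(I)$, and then composing the coordinate-ring surjection with normalization and completion at a point above $m$. Because $(0)$ appears in the given chain of prime ideals, $A$ is already a domain, so no preliminary reduction is needed.

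\textbf{Step 1 (finding the curve).} By Noether normalization I pick a finite injection $A\hookrightarrow k[x_1,\dots,x_d]$, giving a finite morphism $\pi\colon\spec(A)\to\mathbb{A}^d_k$. Translating the $x_i$ by constants I arrange that $\pi$ sends the closed point cut out by $m$ to the origin; this uses $A/m\cong k$, i.e.\ that $k$ is algebraically closed. Since $\pi$ is finite, $\pi(V(I))$ is closed in $\mathbb{A}^d_k$ of dimension $d_1<d$. The lines through the origin in $\mathbb{A}^d_k$ are parametrized by $\mathbb{P}^{d-1}_k$, and the sublocus of those lines entirely contained in $\pi(V(I))$ is a proper closed subvariety of $\mathbb{P}^{d-1}_k$: otherwise the union of all such lines (which is $\mathbb{A}^d_k$) would sit inside $\pi(V(I))$, contradicting the dimension bound. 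Since $k$ is infinite I choose a $k$-rational line $L$ through the origin with $L\not\subseteq\pi(V(I))$. Then $C_0:=\pi^{-1}(L)$ is finite over $L\cong\mathbb{A}^1_k$, hence pure of dimension $1$, and contains the point of $\spec(A)$ cut out by $m$. I pick an irreducible component $C=V(\mathfrak{p})$ of $C_0$ through that point; then $\mathfrak{p}\subseteq m$ with $\dim A/\mathfrak{p}=1$, and $\pi|_C\colon C\to L$ is finite and surjective, so $C\not\subseteq V(I)$, i.e.\ $\mathfrak{p}\not\supseteq I$.

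\textbf{Step 2 (building $\phi$).} Let $B:=A/\mathfrak{p}$, a one-dimensional affine $k$-domain. Its normalization $\widetilde B$ is finite over $B$; pick a maximal ideal $\widetilde m\subset\widetilde B$ lying over $m/\mathfrak{p}$ and localize. Then $\widetilde B_{\widetilde m}$ is a DVR whose residue field is a finite extension of $k=\bar k$ and hence equals $k$, and whose $\widetilde m$-adic completion is isomorphic to $k[[t]]$ by Cohen's structure theorem. Define $\phi$ as the composition
\[
\phi\colon A\twoheadrightarrow A/\mathfrak{p}\hookrightarrow\widetilde B\hookrightarrow\widetilde B_{\widetilde m}\hookrightarrow k[[t]].
\]
Pulling back $(t)$ through each arrow recovers $\widetilde m\widetilde B_{\widetilde m}$, then $\widetilde m$, then $m/\mathfrak{p}$, and finally $m$, so $\phi^{-1}((t))=m$. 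Each arrow after the quotient is injective (localization and completion of Noetherian domains being injective), so $\ker\phi=\mathfrak{p}\not\supseteq I$ by construction.

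\textbf{Main obstacle.} The only step with genuine content is the curve selection in Step 1, where I must arrange for an irreducible curve through the closed point cut out by $m$ that escapes $V(I)$. The Noether-normalization-plus-generic-line device handles it and is what forces me to invoke $k=\bar k$ (both for translating the point to the origin and for picking a $k$-rational line avoiding a proper subvariety of $\mathbb{P}^{d-1}_k$). Once the curve is in hand, Step 2 is a routine application of normalization and Cohen's structure theorem.
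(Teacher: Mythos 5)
Your argument is correct, and its skeleton is the same as the paper's: both proofs use Noether normalization to project $\spec(A)$ finitely onto $\mathbb{A}^d_k$, slice by a line through the image of $m$ that is not contained in the image of $V(I)$ (possible because $\dim V(I)=d_1<d$), and then convert the resulting one-dimensional object upstairs into a $k[[t]]$-point. The differences are in execution. The paper arranges coordinates so that $I$ contracts to $(x_{d_1+1},\ldots,x_d)$ and uses the diagonal line $x_1=\dots=x_d=T$, sending each $y_j$ to a root of its monic equation in $\overline{k((T))}$ and invoking the unique extension of the $T$-adic valuation to argue those roots have positive valuation and hence land in $k[[t]]$; you instead take the scheme-theoretic preimage of a generic line, pick an irreducible component through the point, normalize, localize, and apply Cohen's structure theorem. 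Your route buys a cleaner second half: it sidesteps the valuation-theoretic computation and the paper's implicit claims that the splitting field of the $p_j$ over $k((T))$ is again a Laurent series field and that each individual root (not just a norm) has positive valuation, at the modest cost of quoting finiteness of normalization. You also make explicit the genericity argument for the choice of line that the paper compresses into ``with a good choice of $x_i$ and $y_j$.'' Two small points to keep honest: the purity of dimension of $\pi^{-1}(L)$ needs Krull's height theorem (finiteness over $L$ alone only bounds the dimension from above), and your uses of $k=\overline{k}$ for the residue fields at $m$ and $\widetilde m$ are legitimate since the paper fixes an algebraically closed base field throughout.
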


\begin{proof}

By Noether's Normalization Lemma (see \cite{MR1322960}, \S$13$, Theorem $13.3$), $A$ is isomorphic as a $k$-algebra to $k[x_1, \ldots ,x_d, y_1, \ldots, y_l]/(p_1,\ldots, p_n)$ where each $p_i$ is irreducible and of the form

\[p_i(X,y_i)= y_i^{L_i}+Q^i_{k_i-1}(X)y_i^{L_i-1}+ \dots +Q^i_0(X), \]

\noindent with $Q^i_k(X)=\sum_{\nu \in \mathbb{N}^d}\alpha_{\nu}X^{\nu} \in k[x_1,\ldots,x_d]$. In other words, $A$ is integral over $k[x_1,\ldots,x_d]$. With a good choice of $x_i$ and $y_j$, one may assume that $m=(x_1, \ldots ,x_d, y_1, \ldots, y_l)$, $I\cap k[x_1,\ldots,x_d]=(x_{d_1+1},\ldots, x_d) \neq (0)$ and $Q^j_0(X)$ has no constant term for all $j$. Define a map

\[ \phi: A=k[x_1, \ldots ,x_d, y_1, \ldots, y_m]/(p_1,\ldots, p_m) \rightarrow \overline{k((T))},\]

\noindent by mapping each $x_i$ to $T$ and each $y_j$ to a root of 

\[p_j(\phi(X),Y_j)=Y_j^{L_i}+Q^i_{k_i-1}(\phi(X))Y_j^{L_i-1}+ \dots +Q^i_0(\phi(X))\]

\noindent in $\overline{k((T))}$. Since $Q^i_0(X)$ has no constant term, $Q^i_0(\phi(X))$ is a multiple of $T$, i.e. $v_{T} (Q^i_0(\phi(X))\in \mathbb{Z}_+$. Let $L/k((T))$ be the splitting field of the polynomials $p_j(\phi(X),Y_j)$. Suppose $[L:k((T))]=s$. Then there exists $t \in L$ such that $L=k((t))$. Because $k((T))$ is complete with respect to the $T$-adic valuation, the valuation $v_T$ extends uniquely to $k((t))$ (see \cite{MR1697859}, II, \S$4$, Theorem $4.8$). Moreover, the extension $v_t$ is given by the formula:

\[v_t(\alpha)=\dfrac{1}{s}v_T(N_{k((t))/k((T))}(\alpha)).\]

\noindent Hence, $v_t(\phi(y_j))=\frac{1}{s}v_T(Q^j_0(X)^{(s/L_j)})=\frac{1}{L_j}v_T(Q^j_0(X)) \in \frac{1}{s}\mathbb{Z}_+$ for each $j=1,2,\ldots m$. Thus, the map $\phi$ restricts to a $k$-algebra homomorphism:

\[ \phi: k[x_1, \ldots ,x_d, y_1, \ldots, y_m]/(p_1,\ldots, p_m) \rightarrow k[[t]],\]

\noindent where each of the generators of $A$ is mapped to a multiple of a positive integer power of $t$. The proposition follows easily. 

\end{proof}

\begin{proposition}
\label{propclosurevariety}

Suppose $U$ and $V$ are two irreducible disjoint subsets of a $k$-variety $X$ and $0 < \dim U= \dim \overline{U} < \dim V=:d$. If $U$ is contained in the closure of $V$ then for each point $u$ of $U$, there exists a $k[[t]]$-point of $X$ whose generic point is in $V$ and whose closed point is $u$.

\end{proposition}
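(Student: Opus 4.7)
The plan is to reduce to an affine local question on $\overline{V}$ and then invoke Lemma~\ref{propmorphism}, with a mild variant of its proof to guarantee that the generic point lands in $V$ rather than merely in $\overline{V}\setminus\overline{U}$.

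First I would replace $X$ by an affine open neighborhood $\spec A$ of $u$, so that $u$ corresponds to a maximal ideal $m\subset A$. Since $u\in U\subseteq\overline{V}$, the intersection $\overline{V}\cap\spec A$ is a nonempty irreducible closed subset cut out by a prime ideal $J\subseteq m$, and $\overline{U}\cap\spec A$ is cut out by a prime $I$ with $J\subsetneq I\subsetneq m$. Set $B:=A/J$, an integral $k$-algebra of dimension $d$ with maximal ideal $\mathfrak{m}:=m/J$; the chain $(0)\subsetneq I/J\subsetneq\mathfrak{m}$ then satisfies $0<\dim B/(I/J)=\dim U<d$. Applying Lemma~\ref{propmorphism} to $B$ produces a $k$-algebra homomorphism $\psi\colon B\to k[[t]]$ with $\psi^{-1}((t))=\mathfrak{m}$ and $\psi^{-1}(0)\nsupseteq I/J$; composing with $A\twoheadrightarrow B$ yields $\phi\colon A\to k[[t]]$ whose closed point is $u$ and whose generic point is a prime $P\subseteq A$ lying in $\overline{V}$ but not in $\overline{U}$.

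The remaining subtlety, which is the main obstacle, is that $P$ must actually lie in $V$, not merely in $\overline{V}\setminus\overline{U}$, since the boundary $\overline{V}\setminus V$ may properly contain $\overline{U}$. Because the strata of $\mathcal{AS}_g$ are locally closed, $V$ is open in $\overline{V}$, and $\overline{V}\setminus V$ is cut out in $\overline{V}$ by a nonzero ideal of $B$; I would pick any nonzero element $f$ of this ideal, so that $D(f)\subseteq V\cap\spec B$. I would then refine the construction in the proof of Lemma~\ref{propmorphism}: writing $B$ via Noether normalization as $k[x_1,\dots,x_d,y_1,\dots,y_l]/(p_1,\dots,p_n)$ with $\mathfrak{m}$ at the origin and $(I/J)\cap k[x_1,\dots,x_d]=(x_{d_1+1},\dots,x_d)$, map $x_i\mapsto v_i T$ for a vector $v=(v_1,\dots,v_d)\in k^d$, rather than the fixed choice $x_i\mapsto T$ used in the lemma. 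The conditions that some $v_i$ with $i>d_1$ be nonzero (which forces $P\nsupseteq I$) and that $f$ not vanish along the line parameterized by $v$, e.g.\ by demanding it of a nonzero norm of $f$ in $k[x_1,\dots,x_d]$ (which forces $f\notin P$, hence $P\in D(f)\subseteq V$) are both nonempty Zariski-open conditions on $v\in k^d$, so a generic choice produces the desired arc. All remaining steps — extracting a DVR via the integral relations among the $y_j$ and completing to $k[[t]]$ — are routine and essentially as in the proof of Lemma~\ref{propmorphism}.
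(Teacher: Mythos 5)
Your proposal is correct and follows the same skeleton as the paper (reduce to an affine chart containing $u$, produce a chain of primes, feed it to Lemma~\ref{propmorphism}), but it resolves the one genuinely delicate step differently. The issue you flag --- that Lemma~\ref{propmorphism} only guarantees the generic point avoids $\overline{U}$, not that it lands in $V$ --- is real, and the paper handles it by \emph{choice of chart}: it takes $\spec A$ to be an irreducible affine variety of dimension $d$ containing $u$ that sits inside $\overline{U}\cup V$, so that ``not in $\mathcal{V}(I)=\overline{U}\cap\spec A$'' automatically means ``in $V$,'' and the lemma can then be quoted verbatim. You instead keep the chart inside $\overline{V}$ and perturb the arc: choose $f\in B$ cutting out (part of) the boundary $\overline{V}\setminus V$ and send $x_i\mapsto v_iT$ for generic $v$, so that the kernel avoids both $I$ and $f$. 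Each route has a cost that the other avoids. The paper's version uses the lemma as a black box but silently assumes the existence of such an affine chart inside $\overline{U}\cup V$ --- which is exactly where the local closedness of the strata is being used, without being said; your version makes that hypothesis explicit ($V$ open in $\overline{V}$, which is not in the literal statement of Proposition~\ref{propclosurevariety} but holds for the strata $\Gamma_{\overrightarrow{E}}$), at the price of reopening the proof of Lemma~\ref{propmorphism}. Your genericity argument does go through: the nonvanishing of $N(v_1T,\dots,v_dT)$ is a nonempty open condition on $v$ because the homogeneous components of a nonzero $N\in k[x_1,\dots,x_d]$ cannot all vanish identically, and to pass from $\phi(N)\neq 0$ to $\phi(f)\neq 0$ you should note that $f$ divides its norm inside $B$ (the characteristic polynomial of $f$ over $\Frac(k[x_1,\dots,x_d])$ has coefficients in $k[x_1,\dots,x_d]$ since that ring is integrally closed and $f$ is integral over it), so the cofactor lies in $B$ and $\phi$ applies to it. With that one sentence added, your argument is complete and, if anything, more self-contained than the paper's.
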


\begin{proof}

Suppose $\spec A \subsetneq (\overline{U}\cup V)$ is an irreducible affine variety of dimension equal to $d$ that containing $u$. Then there exists a chain of prime ideals $(0) \subsetneq I \subsetneq m$ of $A$ where $\overline{U}\cap \spec A=\mathcal{V}(I)$ and $u=\mathcal{V}(m)$ and $\dim A>\dim(A/I)=:d_1>0$ and $\dim(A/m)=0$. By Lemma \ref{propmorphism}, there exists a $k$-algebra homomorphism: 

\[ \tilde{\theta}: A \rightarrow k[[t]],\]

\noindent where $\tilde{\theta}^{-1}(t)=m$ and $\tilde{\theta}^{-1}(0) \nsupseteq 
 I$. Thus $\tilde{\theta}$ induces a morphism of $k$-varieties:
 
 \[ \theta: \spec k[[t]] \rightarrow \spec A,\]
 
\noindent where $\theta(\mathcal{V}(t))=\mathcal{V}(m)$ and $\theta(\mathcal{V}(0)) \not\in  \mathcal{V}(I)$, that is $\theta(\mathcal{V}(0))\in (V\cap \spec A)$. Hence, $\theta$ is a $k[[t]]$-point whose generic point is in $V$ and whose closed point is $u$.

\end{proof}

The next fact about germs of Artin-Schreier curves will help us to reduce the problem of constructing deformations for an Artin-Schreier curve to the local case.

\begin{proposition}
\label{localAS}
Suppose $\phi: Y \rightarrow \mathbb{P}^1_k$ is an Artin-Schreier cover and $P \in \mathbb{P}^1_k$ is a branch point of $\phi$. Then the localization of $\phi$ at $P$ is determined by the ramification jump at $P$. 
\end{proposition}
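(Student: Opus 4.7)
The plan is to reduce to Artin--Schreier theory on the formal completion at $P$. Fix a local parameter $x$ at $P$, so that $\widehat{\mathcal{O}}_{\mathbb{P}^1_k, P} \cong k[[x]]$; the localization of $\phi$ at $P$ is the base change $Y \times_{\mathbb{P}^1_k} \spec(k[[x]])$, a $\mathbb{Z}/p$-Galois cover of $\spec(k[[x]])$ with the same ramification jump $d$ as $\phi$ has at $P$. By Artin--Schreier theory applied to $k((x))$, this local cover admits an affine model $y^p - y = g(x)$ with $g \in k((x))$ well-defined modulo $\wp(k((x))) := \{h^p - h : h \in k((x))\}$. Reducing $g$ to minimal form yields $v_x(g) = -d$ with $p \nmid d$, so the proposition amounts to showing that the isomorphism class of the local cover, with isomorphisms allowed to change the local parameter, depends only on $d$.

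I would prove this by exhibiting a canonical normal form. After a further $\wp$-reduction I may assume
\[ g \;=\; \sum_{\substack{1 \le j \le d \\ p \nmid j}} a_j\, x^{-j}, \qquad a_d \ne 0. \]
Rescaling $x \mapsto \lambda x$ with $\lambda^d = a_d$, possible since $k$ is algebraically closed, normalizes $a_d$ to $1$. Then I iterate the following step until no nonzero subleading term remains: let $k$ be the largest index with $1 \le k < d$ and $a_k \ne 0$, apply the change of local parameter $x \mapsto x(1 + c\, x^{d-k})$ with $c = a_k/d$, and follow with a $\wp$-reduction to absorb any $x^{-pm}$ terms produced. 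A direct binomial expansion shows the substitution sends $x^{-d}$ to $x^{-d} - dc\, x^{-k} + (\text{terms of pole order} < k)$, so $c = a_k/d$ (well-defined since $p \nmid d$) kills the current $a_k$; moreover, the contribution to the new coefficient at index $k' > k$ comes only from $x^{-j}$ with $j = k' + m(d-k) \le d$ and $m \ge 0$, which forces $m = 0$ and $j = k'$, whose coefficient is already zero. Hence positions above $k$ remain at zero.

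This structure immediately implies termination: the subsequent $\wp$-reduction only shifts coefficients at positions $m \le d/p$ and only acts on positions $pm < k$ that are currently nonzero, so it never touches indices $\ge k$. Thus each pass strictly decreases the largest subleading index $k$, a positive integer, and the process ends in at most $d-1$ iterations with $g = x^{-d}$. Since this canonical form depends only on $d$, the local cover at $P$ is determined up to isomorphism by the ramification jump. The main place where care is needed is the binomial computation and the bookkeeping it induces: one must verify in all characteristics that the choice $c = a_k/d$ really eliminates $a_k$ without accidentally creating a new pole of order $\ge k$, and that iterated $\wp$-reduction terminates even when it triggers further reductions at positions divisible by higher powers of $p$ (which it does, since each step divides the affected index by $p$ and so descends through a finite sequence of positive integers).
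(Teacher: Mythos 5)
Your argument is correct, but it is genuinely different in kind from what the paper does: the paper's entire proof is a citation to Lemma 2.1.2 of \cite{MR2016596}, whereas you give a self-contained normal-form argument showing directly that every $\mathbb{Z}/p$-germ over $k[[x]]$ with jump $d$ is isomorphic to $y^p-y=x^{-d}$. Your key computation is sound: with $z=cx^{d-k}$ the expansion $(1+z)^{-j}$ contributes to pole order $j-m(d-k)$, and for a target index strictly between $k$ and $d$ the constraint $j=k'+m(d-k)\le d$ with $k'>k$ does force $m=0$, so the elimination of the top subleading coefficient via $c=a_k/d$ (using $p\nmid d$) does not repopulate higher positions; the subsequent $\wp$-reductions strictly decrease the indices they touch, all of which lie below $k$, so the induction terminates. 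This buys a concrete, checkable proof at the cost of some bookkeeping; the citation buys brevity. Two small points you should tighten if you keep this version: (i) the substitution also produces terms of nonnegative degree in $x$, and you should say explicitly that these are absorbed because $\wp$ is surjective on $k$ and on $x k[[x]]$ (successive approximation), so they do not affect the equivalence class; (ii) you overload the symbol $k$ as both the base field and the running index, and you should also record that the branch point is totally ramified (inertia is all of $\mathbb{Z}/p$), so the base change to $\operatorname{Spec} k[[x]]$ is the spectrum of a single complete local ring and ``isomorphism'' may legitimately include an automorphism of the base $k[[x]]$ --- which is exactly the flexibility Proposition \ref{propreduce} needs when it matches germs living over different copies of $k[[x]]$.
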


\begin{proof}

See Lemma $2.1.2$ of \cite{MR2016596}.

\end{proof}

The next proposition, which is about the relation between the irreducible strata of $\mathcal{AS}_g$ and deformations of wildly ramified curves, will be used intensively in \S\ref{technical}.

\begin{proposition}
\label{propreduce}
Suppose $\overrightarrow{E}_1=\{e_1,e_2,\ldots,e_n\} \prec \overrightarrow{E}_2=\{f_{11},f_{12},\ldots,f_{1m_1},f_{21},f_{22},\ldots,f_{nm_n}\}$, with $\{e_i\}\prec \{f_{i1},\ldots, f_{im_i}\}$ for $i=1,2,\ldots,n$. Then $\Gamma_{\overrightarrow{E}_1}$ is in the closure of $\Gamma_{\overrightarrow{E}_2}$ if and only if there exist $n$ Artin-Schreier covers $\Phi_1, \Phi_2, \ldots, \Phi_n$ over $S=\spec k[[t]]$, where the generic fiber of $\Phi_i$ yields a $k((t))$-point of $\Gamma_{\{f_{1i},\ldots, f_{im_i}\}}$ and the special fiber yields a $k$-point of $\Gamma_{\{e_i\}}$.

\end{proposition}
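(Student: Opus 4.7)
I would treat this as an if-and-only-if, bridging closure relations in $\mathcal{AS}_g$ with flat deformations over $S=\spec k[[t]]$ via Proposition \ref{propclosurevariety}, and passing between global families and their localizations at branch points via Proposition \ref{localAS}. Throughout, a $k[[t]]$-point of $\mathcal{AS}_g$ is the same datum as a flat family $\Phi\colon\mathcal{Y}\to\mathbb{P}^1_S$ of Artin-Schreier covers of genus $g$, with special and generic fibers landing in the strata indexed by their respective branch partitions.

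For the forward implication, assume $\Gamma_{\overrightarrow{E}_1}\subseteq\overline{\Gamma_{\overrightarrow{E}_2}}$. Because $\overrightarrow{E}_1\prec\overrightarrow{E}_2$ strictly and no stratum can lie in the closure of another of the same dimension, Proposition \ref{propdimension} together with the dimension lemma following it force $\dim\Gamma_{\overrightarrow{E}_1}<\dim\Gamma_{\overrightarrow{E}_2}$. I then apply Proposition \ref{propclosurevariety} with $U=\Gamma_{\overrightarrow{E}_1}$ and $V=\Gamma_{\overrightarrow{E}_2}$: for any $k$-point $u$ of $\Gamma_{\overrightarrow{E}_1}$ this produces a flat family $\Phi$ over $S$ with closed fiber $u$ and generic fiber in $\Gamma_{\overrightarrow{E}_2}$. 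Let $P_1,\ldots,P_n$ denote the branch points of the special fiber, with jumps $e_i-1$. Because the branch locus of $\Phi$ is a closed subscheme of $\mathbb{P}^1_S$, each of the $m_1+\cdots+m_n$ generic branch points specializes to a unique $P_i$, and the jumps specializing over $P_i$ form a refinement of $e_i$. Localizing $\Phi$ on the formal neighborhood of $P_i$ then produces, via Proposition \ref{localAS}, a cover $\Phi_i$ with the required ramification on both fibers; after (if necessary) reindexing the $f_{ij}$, its generic partition is $\{f_{i1},\ldots,f_{im_i}\}$, as forced by the decomposition $\overrightarrow{E}_2=\bigsqcup_i\{f_{i1},\ldots,f_{im_i}\}$.

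For the reverse implication, I would glue the given local $\Phi_i$ into a global family. Fix distinct sections $P_1,\ldots,P_n$ of $\mathbb{P}^1_S$, taken constant in $t$, with local parameter $x_i$ at $P_i$. By Proposition \ref{localAS}, each $\Phi_i$ is determined, near its unique special branch point, by an Artin-Schreier equation $y_i^p-y_i=F_i(x_i,t)$ in minimal form, whose principal part $F_i^{\mathrm{pr}}$ has pole of order $e_i-1$ at $x_i=0$ on the special fiber and, after translation of $x_i$, carries the generic ramification data $\{f_{i1}-1,\ldots,f_{im_i}-1\}$. Viewing each $F_i^{\mathrm{pr}}$ as a rational function on $\mathbb{P}^1_S$ with unique pole at $P_i$, put
\[F(x,t) := \sum_{i=1}^{n} F_i^{\mathrm{pr}}(x,t) \in k[[t]](x).\]
The equation $y^p-y=F(x,t)$ defines a global $\mathbb{Z}/p$-cover $\Phi$ of $\mathbb{P}^1_S$; because the summands $F_j^{\mathrm{pr}}$ with $j\neq i$ are regular at $P_i$, the principal part of $F$ at $P_i$ is still $F_i^{\mathrm{pr}}$, so $\Phi$ inherits the full ramification data of $\Phi_i$ at $P_i$ on both fibers. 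Summing over $i$, the special fiber of $\Phi$ lies in $\Gamma_{\overrightarrow{E}_1}$ and the generic fiber in $\Gamma_{\overrightarrow{E}_2}$, and the induced morphism $S\to\mathcal{AS}_g$ witnesses $\Gamma_{\overrightarrow{E}_1}\subseteq\overline{\Gamma_{\overrightarrow{E}_2}}$.

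The main obstacle I anticipate is the minimality check for the glued equation in the reverse direction: I must confirm that $F(x,t)$ remains in minimal form on both fibers, so that the $e_i$ and $f_{ij}$ really are ramification jumps rather than mere pole orders, and that the Riemann-Hurwitz count of Lemma \ref{lemmagenus} delivers genus $g$ on each fiber. Minimality at each $P_i$ descends from that of $\Phi_i$ since principal parts supported at distinct sections cannot interact, and the genus on each fiber follows from $\sum_j f_{ij}=e_i$. This local-to-global bookkeeping, while conceptually straightforward, is exactly the delicate piece that will govern the explicit deformations to be constructed in \S\ref{technical}.
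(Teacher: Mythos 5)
Your forward direction is essentially the paper's: disjointness plus $\Gamma_{\overrightarrow{E}_1}\subseteq\overline{\Gamma_{\overrightarrow{E}_2}}$ forces $\dim\Gamma_{\overrightarrow{E}_1}<\dim\Gamma_{\overrightarrow{E}_2}$, Proposition \ref{propclosurevariety} produces a $k[[t]]$-point through any given point of $\Gamma_{\overrightarrow{E}_1}$ with generic point in $\Gamma_{\overrightarrow{E}_2}$, and restricting to the complete local rings at the ramification points yields the $\Phi_i$. That half is fine.

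The reverse direction has a genuine gap. The glued family $y^p-y=\sum_i F_i^{\mathrm{pr}}(x,t)$ exhibits \emph{one} point of $\Gamma_{\overrightarrow{E}_1}$ in $\overline{\Gamma_{\overrightarrow{E}_2}}$, namely the cover whose polar part at $P_i$ is exactly $F_i^{\mathrm{pr}}(x,0)$; the proposition requires this for \emph{every} point, and a nonempty closed subset of the irreducible stratum $\Gamma_{\overrightarrow{E}_1}$ need not be the whole stratum. Varying the sections $P_i$ does not help, because the coefficients of the polar parts of the special fibre are additional moduli that your construction never varies, and $\dim_k\Gamma_{\overrightarrow{E}_1}=d-1-\sum_j\lfloor(e_j-1)/p\rfloor$ exceeds the dimension of the configuration space of the branch points. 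For an arbitrary $\phi\colon y^p-y=f(x)$ in $\Gamma_{\overrightarrow{E}_1}$, Proposition \ref{localAS} identifies the germ of $\phi$ at $P_i$ with the special-fibre germ of $\Phi_i$ only up to an automorphism of the formal disk $k[[x-P_i]]$ together with an Artin--Schreier substitution; such an automorphism does not extend to $\mathbb{P}^1$, and transporting $F_i^{\mathrm{pr}}(x,t)$ through it produces a formal Laurent series over $k[[t]]$ rather than a rational function, so the naive global sum cannot deform the given $\phi$. This is precisely where the paper invokes formal patching (\cite[Theorem 3.3.4]{MR1767273}) to glue the local deformations of the germs $\hat\phi_i$ onto the fixed cover away from its branch points, deforming the arbitrary $\phi$ itself; you need that step (or an equivalent) in place of the summation. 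A secondary caveat: a local family need not admit a minimal-form rational representative over $k[[t]]$ --- in the paper's explicit deformations the pole order at $x=0$ is $pw>e_1-1$, and minimality on the generic fibre appears only after the inseparable extension $t_1^p=t$ --- so ``principal part in minimal form'' requires more care than your closing paragraph allows.
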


\begin{proof}

The ``if'' conditions of the above proposition can be rephrased as follows. There are $n$ Artin-Schreier covers $\Phi_i:Y_i \rightarrow Z_i \simeq \mathbb{P}^1_S$, for $i=1,2, \ldots,n$, whose special fiber is an Artin-Schreier curve over $k$, branched at a point $b_i$ with ramification jump $e_i-1$, and whose generic fiber is an Artin-Schreier curve over $k((t))$, branched at $m_i$ points that specialize to $b_i$ and which have ramification jumps $f_{i1}-1,\ldots, f_{im_i}-1$. Consider a point in $\Gamma_{\overrightarrow{E}_1}$ that represents an Artin-Schreier cover $\phi: Y \rightarrow \mathbb{P}^1_k$, which is wildly ramified at $y_1,\ldots,y_n \in Y$ over $b_1,\ldots,b_n \in \mathbb{P}^1_k$, respectively,  with ramification jumps $e_1-1,\ldots,e_n-1$, respectively. Let $\hat{\phi}_{j}$ be the germ of $\phi$ at $y_j$. The localization of the special fiber of $\Phi_j$ at its branch point has ramification jump $e_{j-1}$ by the hypothesis. Since the germ of an Artin-Schreier cover at its branch point is uniquely determined by the ramification jump at this point by Proposition \ref{localAS}, the germ in the previous sentence is isomorphic to $\hat{\phi}_j$. Thus, $\Phi_j$ is a deformation of a curve such that the localization at its branch point is isomorphic to $\hat{\phi}_j$.  Using formal patching, see e.g., \cite[Theorem 3.3.4]{MR1767273}, one can construct a deformation $\Phi: \mathscr{Y}\rightarrow \mathbb{P}_S^1$ of $\phi$ locally via the deformations $\Phi_j$ of $\hat{\phi}_j$. It follows from the construction that $\Phi$ is a $\mathbb{Z}/p$-cover whose special fiber is isomorphic to $\phi$ and whose generic fiber can be represented by a $k((t))$-point of $\Gamma_{\overrightarrow{E}_2}$. Whence, the point in $\Gamma_{\overrightarrow{E}_1}$ that indexes $\phi$ is in the closure of $\Gamma_{\overrightarrow{E}_2}$.

Conversely, if $\Gamma_{\overrightarrow{E}_1}$ is in the closure of $\Gamma_{\overrightarrow{E}_2}$, then $\dim \Gamma_{\overrightarrow{E}_1}< \dim \Gamma_{\overrightarrow{E}_2}$. Thus, by Proposition \ref{propclosurevariety}, for each $k$-point of $\Gamma_{\overrightarrow{E}_1}$ that represents a cover $\phi: Y \rightarrow \mathbb{P}^1_k$, there exists an Artin-Schreier curve $\Phi: \mathscr{Y} \rightarrow \mathbb{P}^1_S$ so that the generic fiber yields a $k((t))$-point of $\Gamma_{\overrightarrow{E}_2}$ and the special fiber is isomorphic to $\phi$. Suppose $\phi$ is wildly ramified at $y_1,\ldots,y_n \in Y$ over $b_1,\ldots,b_n \in \mathbb{P}^1_k$, respectively,  with ramification jumps $e_1-1,\ldots,e_n-1$, respectively. We can get each of the local deformations $\Phi_i$ simply by restricting $\Phi$ to the spectrum of the complete local ring $\hat{\mathcal{O}}_{\mathscr{Y}, y_i}$.
 
\end{proof}

\begin{remark}
The above proposition implies that it suffices to study the deformations of germs or, in other words, to reduce to the case $\overrightarrow{E}_1$ has only one entry.
\end{remark}




The following corollary will be used in \S\ref{disconnectedsection} to prove a result about disconnectedness of $\mathcal{AS}_g$.

\begin{corollary}
\label{corclosureintersect}

Let $\overrightarrow{E}_1$ and $\overrightarrow{E}_2$ be as in the preceding proposition. Suppose $\Gamma_{\overrightarrow{E}_1}$ is not in the closure of $\Gamma_{\overrightarrow{E}_2}$. Then $\Gamma_{\overrightarrow{E}_1}$ is disjoint from the closure of $\Gamma_{\overrightarrow{E}_2}$.

\end{corollary}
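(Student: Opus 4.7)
The plan is to prove the contrapositive: if $\Gamma_{\overrightarrow{E}_1}$ meets $\overline{\Gamma_{\overrightarrow{E}_2}}$ at even one point, then in fact $\Gamma_{\overrightarrow{E}_1} \subseteq \overline{\Gamma_{\overrightarrow{E}_2}}$, which directly contradicts the hypothesis. So I would start by picking a point $x \in \Gamma_{\overrightarrow{E}_1} \cap \overline{\Gamma_{\overrightarrow{E}_2}}$; it represents an Artin-Schreier cover $\phi_x: Y_x \to \mathbb{P}^1_k$ branched at $n$ points with ramification jumps $e_1-1, \ldots, e_n-1$. Using Proposition \ref{propclosurevariety} (applied after passing to any positive-dimensional irreducible subvariety of $\overline{\Gamma_{\overrightarrow{E}_2}}$ that contains $x$ and meets $\Gamma_{\overrightarrow{E}_2}$ in an open dense subset), I would produce a $k[[t]]$-point $\Phi: \mathscr{Y} \to \mathbb{P}_S^1$ of $\mathcal{AS}_g$ whose special fiber is isomorphic to $\phi_x$ and whose generic fiber yields a $k((t))$-point of $\Gamma_{\overrightarrow{E}_2}$.

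Next, following the proof of the ``only if'' direction of Proposition \ref{propreduce}, I would restrict $\Phi$ to the spectra of the complete local rings $\hat{\mathcal{O}}_{\mathscr{Y}, y_i}$ at each of the ramification points $y_1, \ldots, y_n$ of the special fiber. This produces $n$ local Artin-Schreier deformations $\Phi_1, \ldots, \Phi_n$ over $\spec k[[t]]$, where $\Phi_i$ has special fiber of ramification jump $e_i-1$ and generic fiber of jumps $f_{i1}-1, \ldots, f_{im_i}-1$.

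The key step is to transport these local deformations to every other point of $\Gamma_{\overrightarrow{E}_1}$. Let $x' \in \Gamma_{\overrightarrow{E}_1}$ be any other point, representing a cover $\phi_{x'}$. By Proposition \ref{localAS}, the germ of an Artin-Schreier cover at a ramification point depends, up to isomorphism, only on the ramification jump; hence the germs of $\phi_{x'}$ at its branch points are isomorphic to those of $\phi_x$ at the corresponding branch points. So the same $\Phi_1, \ldots, \Phi_n$ serve as local deformations of the germs of $\phi_{x'}$. Applying the ``if'' direction of Proposition \ref{propreduce} (which is really the formal patching argument of \cite[Theorem 3.3.4]{MR1767273}), these local pieces glue into a global $k[[t]]$-deformation of $\phi_{x'}$ whose generic fiber lies in $\Gamma_{\overrightarrow{E}_2}$, so that $x' \in \overline{\Gamma_{\overrightarrow{E}_2}}$. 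Since $x'$ was arbitrary, $\Gamma_{\overrightarrow{E}_1} \subseteq \overline{\Gamma_{\overrightarrow{E}_2}}$.

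The main obstacle is the mild dimension mismatch in the first step: Proposition \ref{propclosurevariety} strictly requires $\dim U \geq 1$, while we naturally want to take $U = \{x\}$. This is why I would pass through an auxiliary positive-dimensional irreducible subvariety of $\overline{\Gamma_{\overrightarrow{E}_2}}$ through $x$, extracting the trait from there (equivalently, invoking the standard ``trait through a point of the closure'' construction that underlies Proposition \ref{propclosurevariety}). Beyond this small technical detour, the argument is a direct application of the portability of local germ deformations across the stratum, made possible by Proposition \ref{localAS}: the mere existence of a single deformation at one point of $\Gamma_{\overrightarrow{E}_1}$ automatically forces the same behavior at every point of the stratum.
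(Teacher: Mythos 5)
Your proposal is correct and follows essentially the same route as the paper: assume a single point of $\Gamma_{\overrightarrow{E}_1}$ lies in $\overline{\Gamma_{\overrightarrow{E}_2}}$, extract a $k[[t]]$-deformation with that point as special fiber, restrict to the germs at the branch points to obtain the local deformations, and then invoke Proposition \ref{propreduce} (whose ``if'' direction already contains the transport-via-Proposition~\ref{localAS} and formal patching argument you spell out) to conclude the whole stratum is in the closure, contradicting the hypothesis. Your explicit handling of the trait-through-a-point issue is a point the paper glosses over, but it is a refinement of, not a departure from, the paper's argument.
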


\begin{proof}

Suppose that there is a $k$-point of $\Gamma_{\overrightarrow{E}_1}$, which corresponds to a cover $\phi: Y \rightarrow \mathbb{P}^1_k$, that is in the closure of $\Gamma_{\overrightarrow{E}_2}$. Then, there is a deformation $\Phi$ over $S=\spec k[[t]]$ whose generic fiber yields a $k((t))$-point of $\Gamma_{\overrightarrow{E}_2}$ and its special fiber is isomorphic to $\phi$. Thus, for each $e_i$,  there exists a deformation over $S$ from a $k((t))$-point of $\Gamma_{\{f_{1i},\ldots, f_{im_i}\}}$ to a $k$-point of $\Gamma_{\{e_i\}}$ formed by restricting $\Phi$. Hence, by the preceding proposition, $\Gamma_{\overrightarrow{E}_1}$ is in the closure of $\Gamma_{\overrightarrow{E}_2}$.

\end{proof}

Motivated by Proposition 4.1 of  \cite{MR2985514} and with the aim to find sufficient conditions for the moduli space $\mathcal{AS}_g$ to be connected, we construct deformations that will prove Theorem \ref{connectedness}.

\begin{theorem}
\label{deformationtheorem}

 Let $e \not \equiv 1 \pmod{p}$ be an integer and $\{e_1, e_2, \ldots, e_n\}$ be a partition of $e$ such that $e_i \not\equiv 1 \pmod{p}$ for all $i$. Fix a projective line $\mathbb{P}^1_{k[[t]]}$ and a $k$-point $b$ of $\mathbb{P}^1_{k[[t]]}$. Identify a projective line $\mathbb{P}^1_{k((t))}$ with the generic fiber of $\mathbb{P}^1_{k[[t]]}$. Then there exists a $\mathbb{Z}/p$-Galois cover $\psi_S$ over $S=\spec k[[t]]$ whose special fiber is an Artin-Schreier cover over $k$, branched only at $b$ with ramification jump $e-1$, and whose generic fiber is isomorphic to an Artin-Schreier cover $\psi_{\eta}$ over $\mathbb{P}^1_{k((t))}$ that is branched at $n$ distinct points $a_1, a_2, \ldots, a_n$ which specialize to $b$ and which have ramification jumps $e_i-1$ at $a_i$ in the following situations:

\begin{enumerate}[label=(\alph*)]
\item $n=2$ and $\lfloor \frac{e-1}{p} \rfloor > \lfloor \frac{e_1-1}{p} \rfloor + \lfloor \frac{e_2-1}{p} \rfloor$, or
\item $n=3$ and $e_i \equiv \frac{p+1}{2} \pmod p$ for all $i$, or
\item $n=3$ and $e_i=p-1$ for some $i$, or
\item $n=p-m+1$ and $e_i=m+1$ for all $i$, or
\item $n=4$ and $\{e_1,e_2,e_3,e_4\}=\{3,2,2,2\}$ or $\{3,3,2,2\}$ where $p=5$.

\end{enumerate}


\end{theorem}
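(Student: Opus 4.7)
The plan is to combine Propositions \ref{localAS} and \ref{propreduce}: since an Artin-Schreier germ at a branch point is determined by its ramification jump, it suffices to construct, for each configuration (a)--(e), a single local deformation over $\spec k[[t]]$. The problem thus reduces to writing down an explicit family
\[
y^p - y = f(x,t), \qquad f(x,t) \in k[[t]](x),
\]
such that the generic fiber, after an Artin-Schreier change of variable (i.e.\ subtracting $h(x,t)^p - h(x,t)$ to put $f$ in minimal form over $k((t))$), exhibits exactly $n$ poles of orders $e_i - 1$ at distinct points $a_i(t)$ specializing to $x = 0$, while the special fiber $f(x,0)$, likewise reduced to minimal form, has a single pole of order $e - 1$ at $x = 0$.

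For case (a) the natural template is
\[
f(x,t) = \frac{P(x)}{x^{e_1-1}(x - t)^{e_2-1}},
\]
which manifestly has the desired two poles over $k((t))$; at $t = 0$ these collide into a single pole of order $e - 2$, and the hypothesis $\lfloor (e-1)/p\rfloor > \lfloor (e_1-1)/p\rfloor + \lfloor (e_2-1)/p\rfloor$ is exactly what is needed to force an Artin-Schreier correction $h(x)^p - h(x)$ to raise the special-fiber pole to precisely $e - 1$ (and no higher) after minimalization. For cases (b) and (c), where three poles with residue conditions $e_i \equiv (p+1)/2$ or $e_i = p - 1$ are required, I would choose $f$ to be symmetric under a cyclic permutation of the three centers $a_1(t), a_2(t), a_3(t)$ so that at $t = 0$ a visible algebraic identity collapses the three poles into a single pole of the prescribed jump $e - 1$. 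For case (d), with $n = p - m + 1$ and equal jumps $m$, the symmetry suggests taking the branch points to be translates of the $n$-th roots of $t$ and using partial-fraction identities over those roots; Fermat's little theorem then forces the appropriate cancellation modulo $p$ at $t = 0$. Case (e) is handled by an ad-hoc construction specific to $p = 5$.

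The main obstacle is the careful bookkeeping of minimal-form reductions. One must simultaneously ensure that over $k((t))$ no cancellation occurs which would lower the generic jumps $e_i - 1$, and that at $t = 0$ the correct $h(x)$ can be produced whose Artin-Schreier coboundary absorbs the $p$-divisible part of the pole of $f(x,0)$, leaving a jump of exactly $e - 1$. One also has to verify that no spurious branch points appear away from the collapsing cluster. Assembling these verifications simultaneously across all five cases is the technical core of the construction, and is precisely what Section~\ref{technical} is devoted to.
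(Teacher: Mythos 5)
Your reduction to local germs via Propositions \ref{localAS} and \ref{propreduce} matches the paper, but the explicit construction you propose for case (a) cannot work, and its failure points to the key idea you are missing. With the template $f(x,t)=P(x)/(x^{e_1-1}(x-t)^{e_2-1})$ and a numerator regular at $t=0$, the special fiber has a pole at $x=0$ of order at most $(e_1-1)+(e_2-1)=e-2$. You claim the hypothesis $\lfloor\frac{e-1}{p}\rfloor>\lfloor\frac{e_1-1}{p}\rfloor+\lfloor\frac{e_2-1}{p}\rfloor$ lets an Artin--Schreier correction $h^p-h$ ``raise'' this to $e-1$; but passing to minimal form can only preserve or lower a pole order (it removes $p$-th-power leading terms), never raise it, so your special fiber would have genus strictly smaller than that of $\psi_0$ and the family would not be a deformation at all. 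The count $\sum_i(e_i-1)=e-n<e-1$ shows the same obstruction in every case with $n\ge 2$: a naive collision of poles of the generic orders can never produce the required special-fiber jump $e-1$.

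The paper's mechanism is the reverse of what you describe. In case (a) the denominator at the collapsing point is $x^{pw}$, where $pw$ is the smallest multiple of $p$ with $pw\ge e_1$, and the numerator $\sum_{r=0}^{pw-e_1}\binom{e_2-1}{r}x^r(-1)^{e_2-1-r}t^{pw-e_1-r}$ is engineered (Proposition \ref{type1lemma2}) so that the special fiber is exactly $1/x^{e-1}$ \emph{with no reduction needed}, while on the generic fiber the pole at $x=0$ has order $pw\equiv 0\pmod p$; after the inseparable base change $t_1^p=t$ its leading term is a $p$-th power and Artin--Schreier reduction drops the jump there to $e_1-1$, the nonvanishing of the subleading coefficient $a_{e_1-1}$ and of the residue data at $x=t$ being guaranteed by the binomial non-divisibility statements of Lemma \ref{type1lemma1} (this is where your floor-function hypothesis actually enters). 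The same pattern --- a pole of order divisible by $p$ at the collapsing center, with reduction happening on the generic fiber only --- underlies cases (b)--(e) as well (e.g.\ $y^p-y=1/\bigl(x^p(x^{p-n}-t^{p-n})^n\bigr)$ for (d)), together with non-obvious auxiliary inputs such as the separability of $\sum_i\binom{(p-1)/2}{i}^2x^{(p-1)/2-i}$ for (b). None of this is recoverable from the symmetry and ``Fermat's little theorem'' heuristics you sketch, so the proposal has a genuine gap.
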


\begin{proof}

The proof will be provided in \S \ref{technical}. 

\end{proof}

\begin{remark}
Proposition 4.1 of \cite{MR2985514} is a special case of Theorem \ref{deformationtheorem} part $(a)$.
\end{remark}

\subsubsection{The Oort-Sekiguchi-Suwa Component}

Suppose $\Psi$ is a $\mathbb{Z}/p$-cover over $R=\mathcal{W}(k)[\zeta_p]$. In \cite{MR1767273}, Bertin and M\'{e}zard study the \emph{versal deformation ring} $R_{\Psi}$ whose spectrum is the formal deformation space of $\Psi$. They construct explicitly a family of infinitesimal deformations for $\Psi$ that is parameterized by an irreducible component of the formal deformation space, called \emph{the Oort-Sekiguchi-Suwa component}. It turns out that the dimension of the component is equal to the dimension of the versal deformation ring. In particular, if $\Psi$ has ramification jump $m=pq-l$, where $l \in [1,p[$, we have:

\[\dim_{\text{Krull}} R_{\Psi}=\begin{cases} 
      q & l \neq 1 \\
      q+1 & l=1 
   \end{cases}
\]

\noindent For more details, see Theorem 5.3.3 of \cite{MR1767273}. Most of the deformations that we will construct to prove Theorem \ref{deformationtheorem} are explicit examples of ones that do \emph{not} lie in the Oort-Sekiguchi-Suwa component. In particular, only the local deformation $\psi_S$ in part $(a)$ of Theorem \ref{deformationtheorem}, under the additional condition $e_1=p$ and $e_2 \le p$ is an element of the component.

In the rest of this subsection, we shall realize the characteristic $p$ fibers of the deformations in the Oort-Sekiguchi-Suwa component as special cases of ones that are used by Pries and Zhu in the proof of Proposition 4.1 of \cite{MR2985514}. Consider an Artin-Schreier cover $\phi$ over $k$ defined by an affine equation:

\begin{equation}
\label{bertinmezardArtinschreier}
    y^p-y=t^{-m},
\end{equation}

\noindent where $m=pq-l$, $l \in [1,p[$. One can think of $\phi$ as a $k$-point of $\Gamma_{\{m+1\}}$. We will study the case $l=1$ and $l \ne 1$ separately.

\textit{Case $l=1$}. Let $\zeta=yt^q$, then (\ref{bertinmezardArtinschreier}) can be written as:

\begin{equation}
\label{firstzetabertinmezardArtinschreier}
    \zeta^p-t^{(p-1)q}\zeta=t.
\end{equation}

\noindent Consider the Artin-Schreier cover $\Phi$ over $S=\spec k[[x_1, \ldots, x_q]]$ given by the following affine equation:

\begin{equation}
\label{firstbertinmezarddeformation}
    \zeta^p-(t^q+x_1t^{q-1}+\dots+x_q)^{p-1}\zeta=t.
\end{equation}

\noindent The special fiber of $\Phi$ is clearly birationally equivalent to $\phi$. On the generic fiber of $\Phi$, we define $Z=\zeta/(t^q+x_1t^{q-1}+\dots+x_q)$. Equation (\ref{firstbertinmezarddeformation}) can be written in familiar form:

\begin{equation}
\label{firstdeformcannonical}
    Z^p-Z=\dfrac{t}{(t^q+x_1t^{q-1}+\dots+x_q)^p}=:F(t).
\end{equation}

\noindent Let us study $a(t)=t^q+x_1t^{q-1}+\dots+x_q$. One can show that the polynomial is separable over $k[[t]][[x_1,\ldots,x_q]]$. Suppose $u_0,u_1, \ldots, u_q$ are roots of $x^q+x_1t^{q-1}+\dots+x_q$ over a fixed algebraic closure of $k[[t]][[x_1,\ldots,x_q]]$. Then the right hand side of the above equation is:

\[F(t)=\dfrac{t}{\prod_{i=1}^q(t-u_i)^p}.\]

\noindent Thus, 

\[((t-u_j)^pF(t))'=\dfrac{1}{\prod_{\substack{i=1 \\ i \ne j}}^q(t-u_i)^p}\]

\noindent is clearly not equal to $0$ or undefined when we plug in $t=u_j$ Hence, the minimal form of Artin-Schreier equation (\ref{firstdeformcannonical}) has precisely $q$ distinct poles of order $p-1$. It follows from Lemma IV.2.3 of \cite{MR1011987} that $\Phi$ is a deformation from a point of $\Gamma_{\{pq\}}$ to a point of $\Gamma_{\overrightarrow{E}_2}$ where $\overrightarrow{E}_2=\underbrace{\{p,p,\ldots,p\}}_{q}$.

\textit{Case $l \ne 1$}. Again, let $\zeta=yt^q$. This time, equation (\ref{bertinmezardArtinschreier}) is written as:

\begin{equation}
\label{secondzetabertinmezardArtinschreier}
    \zeta^p-t^{(p-1)q}\zeta=t^l.
\end{equation}

\noindent Consider the Artin-Schreier cover $\Phi$ over $S=k[[x_1, \ldots, x_{q-1}]]$ defined by the equation:

\begin{equation}
\label{secondbertinmezarddeformation}
    \zeta^p-(t^q+x_1t^{q-1}+\dots+x_{q-1}t)^{p-1}\zeta=t^l.
\end{equation}

\noindent Again, the special fiber is birationally equivalent to $\phi$. On the generic fiber, just like when $l=1$, define $Z=\zeta/(t^q+x_1t^{q-1}+\dots+x_{q-1}t)$. Equation (\ref{secondbertinmezarddeformation}) becomes:

\begin{equation}
\label{seconddeformationcannonical}
    Z^p-Z=\dfrac{t^l}{(t^q+x_1t^{q-1}+\dots+x_{q-1}t)^p}.
\end{equation}

\noindent By a similar argument as in the case $l=1$, the minimal form of Artin-Schrier equation (\ref{seconddeformationcannonical}) has precisely a pole at $0$ of order $p-l$ and $q-1$ other distinct poles of order $p-1$. Thus, again by Lemma IV.2.3 of \cite{MR1011987}, $\Phi$ is a deformation from a point of $\Gamma_{\{m+1\}}$ to a point of $\Gamma_{\overrightarrow{E}_2}$ where $\overrightarrow{E}_2=\{\underbrace{p,p,\ldots,p}_{q-1},p-l+1\}$.

Let us compare those deformations above with one in \cite{MR2985514}. Pries and Zhu's deformations in Proposition $4.1$ of \cite{MR2985514} are defined by equations of the form:

\begin{equation}
\label{RPdeform}
y^p-y=\dfrac{1}{x^{e_2-1}(x-t)^{e_1}},
\end{equation}

\noindent where $p \mid e_1$, say $e_1=pn_1$. One can check the minimal form of the Artin-Schreier equation (\ref{RPdeform}) has a pole at $0$ of order $e_2-1$ and at $t$ of order $e_1-1$. Thus, this is a deformation from a point of $\Gamma_{\{e\}}$ to a point of $\Gamma_{\{pn_1,e_2\}}$. One can get all elements of the Oort-Sekiguchi-Suwa component by iterating Pries and Zhu's deformation.

\begin{remark}
When $p=2$, if $\overrightarrow{E}=\{e_1,e_2,\ldots,e_n\} \in \Omega_d$ then $e_i$ must all be even, or else $e_i$ would be congruent to $1$ modulo $p$. Thus, all deformations are formed by repeated applications of equation (\ref{RPdeform}).
\end{remark}

\subsection{Some Closure Results}
\label{closuresection}

In this section, we show that the combinatorial data in the graph $G_d$ gives partial information about how irreducible strata of $\mathcal{AS}_g$ relate. In fact, we will see in \S\ref{connectednesschap} that the graph $G_d$ gives complete information about this question when $p=2$ or $p=3$.

Let us fix a prime $p$ and construct a directed graph $C_d$. The vertices of the graph correspond to the partitions $\overrightarrow{E}$ in $\Omega_d$. There is an edge from $\overrightarrow{E}$ to $\overrightarrow{E}'$ if and only if $\overrightarrow{E}\prec \overrightarrow{E}'$, and $\Gamma_{\overrightarrow{E}}$ lies in the closure $\Gamma_{\overrightarrow{E}'}$. We will show in \S\ref{connectednesschap} that for $p=2$ or $p=3$, the graph $G_d$ is a subgraph of $C_d$.
 
In general topology, if one irreducible subset of a space lies in the closure of another, then they are contained in the same connected component of that space. Thus, if $C_d$ is connected then so is $\mathcal{AS}_g$. From now on, we will study the connectedness of $C_d$.


\begin{theorem}
\label{theoremclosure}

Suppose $\overrightarrow{E}_1 \prec \overrightarrow{E}_2$. Then there is an edge from the former to the latter in $C_d$ if each entry of $\overrightarrow{E}_1$ splits into entries of $\overrightarrow{E}_2$ of the form:

    \begin{enumerate}[label=(\alph*)]
    \item $\{e\} \rightarrow \{e_1,e_2\}$ where  $\lfloor \frac{e-1}{p} \rfloor > \lfloor \frac{e_1-1}{p} \rfloor + \lfloor \frac{e_2-1}{p} \rfloor$.
    \item $\{e\} \rightarrow \{e_1,e_2,e_3\}$ where $e_i \equiv \frac{p+1}{2} \pmod p$ for $i=1,2,3$.
    \item $\{e\} \rightarrow \{p-1, e_1,e_2\}$. 
    \item $\{(n+1)(p-n+1)\} \rightarrow \underbrace{\{n+1, n+1, \ldots,n+1\}}_{p-n+1}$ where $1\le n \le p-1$.
    \item $\{9\} \rightarrow \{3,2,2,2\}$ or $\{10\} \rightarrow \{3,3,2,2\}$ where $p=5$.
    \end{enumerate}

\end{theorem}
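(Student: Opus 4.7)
The plan is to derive Theorem \ref{theoremclosure} as an immediate consequence of Theorem \ref{deformationtheorem} together with Proposition \ref{propreduce}. The five cases (a)--(e) in the statement of Theorem \ref{theoremclosure} are matched, case by case, with the five situations (a)--(e) for which Theorem \ref{deformationtheorem} guarantees the existence of an explicit local deformation; this correspondence is by construction.

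More concretely, write $\overrightarrow{E}_1=\{e_1,\ldots,e_n\}$ and assume that each entry $e_i$ refines to a sub-partition $\{f_{i1},\ldots,f_{im_i}\}$ of one of the admissible types. For each $i$, Theorem \ref{deformationtheorem} produces a $\mathbb{Z}/p$-Galois cover $\Phi_i$ over $S=\spec k[[t]]$ whose special fiber is an Artin-Schreier cover over $k$ branched only at a prescribed $k$-point with ramification jump $e_i-1$, and whose generic fiber is an Artin-Schreier cover over $k((t))$ branched at $m_i$ distinct points specializing to that point, with ramification jumps $f_{ij}-1$. These $\Phi_i$ are precisely the local deformations demanded by the ``if'' direction of Proposition \ref{propreduce}. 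Therefore $\Gamma_{\overrightarrow{E}_1}$ lies in the closure of $\Gamma_{\overrightarrow{E}_2}$, and combined with the hypothesis $\overrightarrow{E}_1\prec\overrightarrow{E}_2$ this places an edge from $\overrightarrow{E}_1$ to $\overrightarrow{E}_2$ in $C_d$ by the very definition of that graph.

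Thus essentially all the content is packaged into the construction of the local deformations in Theorem \ref{deformationtheorem}, whose proof is deferred to \S\ref{technical}; the formal patching, the reduction from global to local, and the translation between the existence of a $k[[t]]$-arc and membership in a Zariski closure have all been absorbed into Proposition \ref{propreduce}. The main obstacle, therefore, is not in the present section but in \S\ref{technical}, where one must write down explicit Artin-Schreier equations over $k[[t]]$ realizing each of the five splitting types and verify, via careful analysis of minimal forms and pole orders on the generic fiber, that the required ramification jumps are achieved. Cases (b)--(e) are where this work is non-routine, since they fall outside the Oort-Sekiguchi-Suwa component, and case (d) in particular requires a splitting into $p-n+1$ equal pieces that must be handled uniformly.
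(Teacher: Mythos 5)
Your proposal is correct and follows essentially the same route as the paper: reduce via Proposition \ref{propreduce} to local deformations of germs, invoke Theorem \ref{deformationtheorem} case by case to supply those deformations, and conclude the closure relation (hence the edge in $C_d$) from the ``if'' direction of Proposition \ref{propreduce}. The paper phrases the reduction as passing to $\overrightarrow{E}_1=\{e\}$ and writes out only case (a) in detail, but the content is identical to what you describe.
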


\begin{proof}

It suffices to show that if $\overrightarrow{E}_1$ and $\overrightarrow{E}_2$ satisfy the conditions listed above then $\Gamma_{\overrightarrow{E}_1}$ is in the closure of $\Gamma_{\overrightarrow{E}_2}$. By Proposition \ref{propreduce}, we can reduce to the case $\overrightarrow{E}_1=\{e\}$.

$(a)$ Let $\phi_o: Y_o \rightarrow \mathbb{P}_k ^1$ be an Artin-Schreier cover that corresponds to a $k$-point of $\Gamma_{\overrightarrow{E}_1}$. The element $e$ in the partition $\Gamma_{\vec{E}_1}$ indicates the ramification jump of a branch point $b \in \mathbb{P}_k^1$ so that the ramification jump of $\phi_o$ above $b$ is $e-1$.

Let $S=\spec(k[[t]])$. By Theorem \ref{deformationtheorem} $(a)$, there exists an Artin-Schreier cover $\phi_S$ over $S$ whose special fiber is isomorphic to $\phi_o$ and whose generic fiber is branched at two points that specialize to $b$ and that have ramification jumps $e_1-1$ and $e_2-1$. Furthermore, the ramification divisor is otherwise constant. Thus the generic fiber $\phi_S$ has partition $\vec{E}_2$. Thus $\Gamma_{\vec{E}_1}$ is in the closure of $\Gamma_{\vec{E}_2}$ in $\mathcal{AS}_g$.

Parts $(b),(c),(d),(e)$, are proved using similar arguments as in part $(a)$ and Theorem \ref{deformationtheorem} parts $(b),(c),(d),(e)$.
\end{proof}

\begin{remark}
Item $(e)$ of the above lemma is used to lower the bound for the genus $g$ so that the moduli space $\mathcal{AS}_g$ is connected when the base field has characteristic $5$.
\end{remark}

Combining the result above with the discussion in Section \ref{partition} about the two types of edges in the directed graph $G_d$, we have the complete answer for Open Question 1 of \cite{MR2985514}: the necessary and sufficient conditions on the edge $\{e\} \rightarrow \{e_1,e_2\}$ or the edge $\{e\} \rightarrow \{e_1,e_2,e_3\}$ in $G_d$ for $\Gamma_{\vec{E}_1}$ to be in the closure of $\Gamma_{\vec{E}_2}$ in $\mathcal{AS}_g$.

\begin{corollary}
\label{openquestion1}

\begin{enumerate}[label=(\alph*)]
\item If $\vec{E}_1 \prec \vec{E}_2$ with an edge of the form $\{e\} \rightarrow \{ e_1, e_2\}$ in $G_d$ from $\vec{E}_1$ to $\vec{E}_2$, then $\Gamma_{\vec{E}_1}$ is in the closure of $\Gamma_{\vec{E}_2}$ if and only if $\lfloor \frac{e-1}{p} \rfloor > \lfloor \frac{e_1-1}{p} \rfloor + \lfloor \frac{e_2-1}{p} \rfloor$.

\item If $\vec{E}_1 \prec \vec{E}_2$ with an edge of the form $\{e\} \rightarrow \{ e_1, e_2, e_3\}$ in $G_d$ from $\vec{E}_1$ to $\vec{E}_2$, then $\Gamma_{\vec{E}_1}$ is in the closure of $\Gamma_{\vec{E}_2}$. 

\item Suppose $\vec{E}_1 \prec \vec{E}_2$ with an edge between them in $G_d$. Then $\Gamma_{\vec{E}_1}$ is in the closure of $\Gamma_{\vec{E}_2}$ if and only if $\dim_k(\Gamma_{\vec{E}_1})< \dim_k(\Gamma_{\vec{E}_2})$.

\end{enumerate}

\end{corollary}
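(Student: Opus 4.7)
The plan is to derive all three parts from Theorem~\ref{theoremclosure}, the dimension formula of Proposition~\ref{propdimension}, and the dimension dichotomy of Lemma~4.5 of \cite{MR2985514}. Throughout I will use the basic topological fact that since distinct strata are pairwise disjoint locally closed subsets and $\Gamma_{\vec{E}_2}$ is irreducible, if $\Gamma_{\vec{E}_1} \subseteq \overline{\Gamma_{\vec{E}_2}}$ then $\overline{\Gamma_{\vec{E}_1}}$ is a proper irreducible closed subset of $\overline{\Gamma_{\vec{E}_2}}$ (properness because $\Gamma_{\vec{E}_2}$ is the generic stratum of its own closure and is disjoint from $\Gamma_{\vec{E}_1}$), and hence has strictly smaller dimension.

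For part (a), the ``$\Leftarrow$'' direction is immediate from Theorem~\ref{theoremclosure}(a), whose hypothesis is exactly the stated floor inequality. For ``$\Rightarrow$'' I argue by contrapositive: if $\lfloor (e-1)/p \rfloor \le \lfloor (e_1-1)/p \rfloor + \lfloor (e_2-1)/p \rfloor$, then Proposition~\ref{propdimension} gives $\dim_k \Gamma_{\vec{E}_1} \ge \dim_k \Gamma_{\vec{E}_2}$, which by the topological observation above prevents $\Gamma_{\vec{E}_1}$ from lying in the closure of $\Gamma_{\vec{E}_2}$. For part (b), the only second-type edges in $G_d$ are by definition of the form $\{e\} \to \{e_1,e_2,e_3\}$ with $e_i \equiv (p+1)/2 \pmod p$ for all $i$, which is precisely the hypothesis of Theorem~\ref{theoremclosure}(b); so the closure relation holds unconditionally along such an edge.

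Part (c) follows by combining (a) and (b) with the dimension dichotomy. The forward implication is the same topological argument as above. For the reverse implication, I split on the two edge types. If the edge is of first type, then strict dimension increase is, via Proposition~\ref{propdimension}, equivalent to the floor inequality of part (a), so closure follows from (a). If the edge is of second type, then (b) already guarantees closure, and moreover Lemma~4.5 places the edge in its ``case~2,'' giving a strict dimension increase automatically; both sides of the equivalence are therefore satisfied simultaneously.

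The one point requiring real care — the main obstacle, though of a bookkeeping nature — is checking that the floor inequality in (a) matches the complement of the equality-of-dimension case ``$2 < \bar{e}_1 + \bar{e}_2 \le p$ and $\bar{e}_1\bar{e}_2 \ne 0$'' of Lemma~4.5, so that the two halves of (c) glue consistently. This reduces to a short residue analysis using $\bar{e}_1, \bar{e}_2 \in \{0,2,3,\ldots,p-1\}$ (since $e_i \not\equiv 1 \pmod p$), dividing into the subcases $\bar{e}_1\bar{e}_2 = 0$, $4 \le \bar{e}_1+\bar{e}_2 \le p$, and $\bar{e}_1+\bar{e}_2 > p$, and computing $\lfloor (e-1)/p \rfloor - \lfloor (e_1-1)/p \rfloor - \lfloor (e_2-1)/p \rfloor$ in each; in all cases the difference is either $0$ or $1$, matching Lemma~4.5 on the nose.
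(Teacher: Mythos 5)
Your proposal is correct and follows essentially the same route as the paper: the forward implications come from Theorem~\ref{theoremclosure}(a),(b), the converse of (a) comes from the dimension formula, and (c) follows by splitting on the two edge types. The only difference is one of detail: where the paper simply cites Lemma~4.5 of \cite{MR2985514} for the fact that equal dimensions preclude the closure relation, you re-derive it from the disjointness of the locally closed strata and irreducibility, and you spell out the residue bookkeeping that the paper leaves implicit; both of these check out.
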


\begin{proof}

\textit{For edge of the form $\{e\} \rightarrow \{ e_1, e_2\}$:}. The ``if'' direction follows from part $(a)$ of Theorem \ref{theoremclosure}. Conversely, if $\lfloor \frac{e-1}{p} \rfloor = \lfloor \frac{e_1-1}{p} \rfloor + \lfloor \frac{e_2-1}{p} \rfloor$, then by theorem 3.10 of \cite{MR2985514}, dim$_k$ $(\Gamma_{\overrightarrow{E}_1})$=dim$_k$ $(\Gamma_{\overrightarrow{E}_2})$ and $\Gamma_{\overrightarrow{E}_1}$ is not in the closure of $\Gamma_{\overrightarrow{E}_2}$ in $\mathcal{AS}_g$.

\textit{For edge of the form $\{e\} \rightarrow \{ e_1, e_2, e_3\}$:} In order to have an edge of type $\{e\} \rightarrow \{e_1,e_2,e_3\}$ in $G_d$, we must have $e_1+e_2 \equiv 1 \pmod p$, $e_1+e_3 \equiv 1 \pmod p$ and $e_2+e_3 \equiv 1 \pmod p$. Thus, $e_1 \equiv e_2 \equiv e_3 \equiv \frac{p+1}{2} \pmod p$. This is then an immediate result of part $(b)$ of Theorem \ref{theoremclosure}.

Since an edge in $G_d$ is either of a type in part (a) or type in part (b), part (c) easily follows.

\end{proof}




\section{Connectedness}
\label{connectednesschap}

In this section, we will prove Theorem \ref{deformationtheorem}. Let's take a look at one example.

\begin{example}

Let $p=5$ and $g=14$. Then $d=7$. Using Lemma \ref{theoremclosure}, we can draw a subgraph of $C_7$.

 \begin {center}
\begin {tikzpicture}[-latex ,auto ,node distance =1.5 cm and 2.5cm ,on grid ,
semithick] 
\node (C)
{$\{9\}$};
\node (A) [below left=of C] {$\{5,4\}$};
\node (B) [below right =of C] {$\{7,2\}$};
\node (D) [below =of C] {$\{3,3,3\}$} ;
\node (E) [below =of A] {$\{4,3,2\}$} ;
\node (F) [below =of B] {$\{5,2,2\}$} ;
\node (G) [below =of E] {$\{3,2,2,2\}$} ;
\path (C) edge [bend right =15] node[below =0.15 cm] { } (A);
\path (C) edge node[below =0.15 cm] { } (D);
\path (B) edge [bend left =15] node[below =0.15 cm] { } (E);
\path (B) edge node[below =0.15 cm] { } (F);
\path (C) edge [bend right =90] node[below =0.15 cm] { } (G);
\path (C) edge [bend left =5] node[below =0.15 cm] { } (E);
\end{tikzpicture}

\end{center}



\noindent Since the subgraph is connected, the graph $C_7$ is connected. Thus, follow the discussion at the beginning of \S \ref{closuresection}, $\mathcal{AS}_{14}$ is also connected.

\end{example}

\begin{proposition}
\label{cortype3}

Suppose $\overrightarrow{E}_1=\{e,e_3,\ldots,e_n\} \prec \overrightarrow{E}_2=\{e_1,e_2,e_3,\ldots,e_n\}$  with an edge in $G_d$ of the form $\{e\} $ to $\{e_1,e_2\}$ such that $e_1 >p$ and $\Gamma_{\overrightarrow{E}_1}$ is not in the closure of $\Gamma_{\overrightarrow{E}_2}$. Then $\Gamma_{\overrightarrow{E}_1}$ and $\Gamma_{\overrightarrow{E}_2}$ are both in the closure of $\Gamma_{{\overrightarrow{E}_3}}$, where  $\overrightarrow{E}_3=\{p-1,e_1-p+1,e_2,e_3,\ldots\,e_n\}$.

\end{proposition}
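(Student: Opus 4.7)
The plan is to establish the two inclusions $\Gamma_{\overrightarrow{E}_1},\Gamma_{\overrightarrow{E}_2}\subset\overline{\Gamma_{\overrightarrow{E}_3}}$ separately. In each case Proposition~\ref{propreduce} reduces the global closure statement to the existence of a local deformation at a single branch point (the other branch points contribute trivial constant local deformations), and the local deformation is then produced by the appropriate part of Theorem~\ref{deformationtheorem}.

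Before doing anything, I would verify that $\overrightarrow{E}_3=\{p-1,e_1-p+1,e_2,e_3,\ldots,e_n\}$ really lies in $\Omega_d$. The only delicate entry is $e_1-p+1$; it is $\not\equiv 1\pmod p$ precisely when $p\nmid e_1$. The hypothesis that $\Gamma_{\overrightarrow{E}_1}$ is not in the closure of $\Gamma_{\overrightarrow{E}_2}$, combined with Corollary~\ref{openquestion1}(a), forces $\lfloor(e-1)/p\rfloor=\lfloor(e_1-1)/p\rfloor+\lfloor(e_2-1)/p\rfloor$. Writing $e_i=q_ip+r_i$ with $r_i\in\{0,2,\ldots,p-1\}$ and comparing floors case-by-case shows that this equality fails whenever $p\mid e_1$ (the left-hand side jumps by one). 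Hence under our hypotheses $p\nmid e_1$, so $e_1=qp+r$ with $r\in\{2,\ldots,p-1\}$ and $q\geq 1$, and $\overrightarrow{E}_3\in\Omega_d$.

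For $\Gamma_{\overrightarrow{E}_2}\subset\overline{\Gamma_{\overrightarrow{E}_3}}$, I would split the entry $e_1$ of $\overrightarrow{E}_2$ into $\{p-1,e_1-p+1\}$ and keep the entries $e_2,\ldots,e_n$ unchanged. The splitting of $e_1$ is governed by Theorem~\ref{deformationtheorem}(a), whose numerical hypothesis becomes
\[
\Bigl\lfloor\tfrac{e_1-1}{p}\Bigr\rfloor \;>\; \Bigl\lfloor\tfrac{p-2}{p}\Bigr\rfloor + \Bigl\lfloor\tfrac{e_1-p}{p}\Bigr\rfloor,
\]
which, with $e_1=qp+r$ and $r\in\{2,\ldots,p-1\}$, reduces to $q>0+(q-1)$ and is trivially true.

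For $\Gamma_{\overrightarrow{E}_1}\subset\overline{\Gamma_{\overrightarrow{E}_3}}$, I would split the single entry $e$ of $\overrightarrow{E}_1$ into the triple $\{p-1,e_1-p+1,e_2\}$, whose sum is $e_1+e_2=e$; since one piece equals $p-1$, Theorem~\ref{deformationtheorem}(c) applies immediately, with no further dimensional constraint. Proposition~\ref{propreduce} then assembles these local deformations into the desired global deformations proving the two inclusions. The only real obstacle I anticipate is the preliminary arithmetic step showing $p\nmid e_1$; once that is in hand, both inclusions follow directly from the deformation theorem, with the new ingredient being part~(c) (the splitting off of a $p-1$ piece), which produces a witness in $\Gamma_{\overrightarrow{E}_3}$ that is common to the closures of both strata.
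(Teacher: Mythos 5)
Your proposal is correct and follows essentially the same route as the paper: both verify $p\nmid e_1$ from the floor-equality forced by the non-closure hypothesis, obtain $\Gamma_{\overrightarrow{E}_2}\subset\overline{\Gamma_{\overrightarrow{E}_3}}$ by splitting $e_1$ into $\{p-1,e_1-p+1\}$ via part (a), and obtain $\Gamma_{\overrightarrow{E}_1}\subset\overline{\Gamma_{\overrightarrow{E}_3}}$ by splitting $e$ into $\{p-1,e_1-p+1,e_2\}$ via part (c). The paper simply invokes the packaged closure statement (Theorem \ref{theoremclosure}) rather than Theorem \ref{deformationtheorem} plus Proposition \ref{propreduce} directly, which is not a substantive difference.
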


\begin{proof}

Suppose $e_i-1=pr_i+s_i$ where $1 \le s_i \le p-1$ and $r_1 \ge 1$. Then $e-1=p(r_1+r_2)+s_1+s_2+1$. By Corollary \ref{openquestion1}, $\Gamma_{\overrightarrow{E}_1}$ is not in the closure of $\Gamma_{\overrightarrow{E}_2}$ if and only if $\lfloor \frac{e-1}{p} \rfloor = \lfloor \frac{e_1-1}{p} \rfloor + \lfloor \frac{e_2-1}{p} \rfloor$, or $3 \le s_1+s_2+1 \le p-1$. Thus, $e_1-p=p(r_1-1)+s_1+1$, and $s_1+1 <p-1$. It follows that $\lfloor \frac{e_1-1}{p} \rfloor> \lfloor \frac{p-1}{p}\rfloor + \lfloor \frac{e_1-p+1-1}{p} \rfloor$. Whence, $\Gamma_{\overrightarrow{E}_2}$ is in the closure of $\Gamma_{\overrightarrow{E}_3}$ by Lemma \ref{theoremclosure} $(a)$. Also, by Lemma \ref{theoremclosure} $(c)$, $\Gamma_{\overrightarrow{E}_1}$ is in the closure of $\Gamma_{\overrightarrow{E}_3}$.


\end{proof}

The above proposition gives us a strategy to study the connectedness of $C_d$: even though there is no edge on $C_d$ directly connecting two vertices, we can still go from one vertex to another through a third one. The following corollary will show that for most choices of the genus $g$, we can connect all the vertices of $C_d$ corresponding to partitions where at least one entry has a value greater than or equal to $p$.

\begin{corollary}
\label{corhighlevel}

Fix a base field of characteristic $p>0$. 

\begin{enumerate}
    \item If $d+2 \not\equiv 1 \pmod p$, then all vertices of $C_d$ that correspond to partitions with at least one entry having a value greater than or equal to $p$ lie in the same connected component.
    \item If $d+2 \equiv 1 \pmod p$, then all vertices of $C_d$ that correspond to partitions with at least one entry having a value greater than or equal to $p$ are in the same connected component with a partition on the second level.
\end{enumerate} 

\end{corollary}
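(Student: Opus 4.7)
The plan is to prove this corollary by induction on the length $n$ of the partition $\vec{E}$. The base cases follow from Lemma \ref{lemmaminimalpartition}: in case 1 the only length-$1$ partition in $\Omega_d$ is $\{d+2\}$ itself, while in case 2 every length-$2$ partition already lies on the second level. For the inductive step I take $\vec{E}=\{e_1,\ldots,e_n\}$ with $e_1\ge p$ and either $n\ge 2$ (case 1) or $n\ge 3$ (case 2), and construct a path in $C_d$ from $\vec{E}$ to a strictly shorter partition still containing an entry $\ge p$.

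The construction splits into sub-cases governed by residues modulo $p$. \textbf{Sub-case A:} some $j\ge 2$ satisfies $e_1+e_j\not\equiv 1\pmod p$, so I merge $e_1$ and $e_j$, obtaining a length-$(n-1)$ partition that still contains the large entry $e_1+e_j\ge p$. The edge in $C_d$ is either direct from Theorem \ref{theoremclosure}(a) or, when its dimension hypothesis fails, provided by Proposition \ref{cortype3} via the detour $\{p-1,e_1-p+1,e_j,\ldots\}$; in the degenerate case $e_1=p$ one computes directly that $\lfloor (p+e_j-1)/p\rfloor=1+\lfloor (e_j-1)/p\rfloor$, so Theorem \ref{theoremclosure}(a) already applies.

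\textbf{Sub-case B:} every $e_j$ with $j\ge 2$ has residue $1-e_1\pmod p$, blocking all two-entry merges involving $e_1$. If $e_1\equiv (p+1)/2\pmod p$ (\textbf{B1}), all entries share residue $(p+1)/2$ and, provided $n\ge 3$, Theorem \ref{theoremclosure}(b) supplies a triple-merge edge $\{e_1+e_2+e_3,e_4,\ldots,e_n\}\to\vec{E}$, reducing $n$ by $2$ while keeping the merged entry $\ge p$. Otherwise (\textbf{B2}, which forces $e_1\ge p+2$ since $e_1=p$ falls into A), I take a three-edge detour: first the refinement $\vec{E}\to\{p,e_1-p,e_2,\ldots,e_n\}$ by splitting $e_1$ (Theorem \ref{theoremclosure}(a), whose hypothesis $\lfloor (e_1-1)/p\rfloor>\lfloor (e_1-p-1)/p\rfloor$ is easily verified); next $\{p+e_2,e_1-p,e_3,\ldots\}\to\{p,e_1-p,e_2,\ldots\}$ by merging $p$ with $e_2$ (again direct); and finally $\{(p+e_2)+e_3,e_1-p,e_4,\ldots\}\to\{p+e_2,e_1-p,e_3,\ldots\}$ by merging $p+e_2$ with $e_3$, which is direct when $r:=1-e_1\bmod p$ satisfies $r\ge(p+1)/2$ and follows from Proposition \ref{cortype3} otherwise, since $p+e_2>p$. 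Because $e_1\not\equiv(p+1)/2$, one has $2(1-e_1)\not\equiv 1\pmod p$, so the final partition is valid; it has length $n-1$ and still contains the entry $(p+e_2)+e_3\ge p$.

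Iterating these moves drives any $\vec{E}$ containing an entry $\ge p$ down to $\{d+2\}$ in case 1 or to some length-$2$ partition in case 2, completing the induction. The main obstacle is \textbf{Sub-case B2}, where neither a two-entry merge nor the triple-merge of Theorem \ref{theoremclosure}(b) is directly available; the key device is to temporarily introduce $p$ as an auxiliary entry (always permissible by Theorem \ref{theoremclosure}(a)) and absorb two successive entries into it, so that the combined term has residue $2(1-e_1)$ and escapes the forbidden class $1-e_1\pmod p$.
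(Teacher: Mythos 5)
Your argument is correct, but it takes a genuinely different route from the paper's. The paper argues top-down: for case 1 it asserts the existence of a path in $G_d$ from $\{d+2\}$ to $\overrightarrow{E}$ all of whose intermediate partitions retain an entry greater than $p$, applies Proposition \ref{cortype3} edge by edge, and treats the entry equal to $p$ by the same direct floor computation you use; case 2 is then reduced to case 1 by merging all entries but one (chosen $\not\equiv 0 \pmod p$) into a single large entry and working inside $C_{e'-2}$. You instead run a bottom-up induction on the length of the partition, absorbing entries into the large one two or three at a time, with an explicit residue analysis deciding which merge is available. Your extra care buys something real: Proposition \ref{cortype3} only applies to a split whose larger piece exceeds $p$, and for partitions in your sub-case B2 --- e.g.\ $\{12,3,3\}$ with $p=7$, where every $e_j$ with $j\ge 2$ is forced into the class $1-e_1 \pmod p$ --- every $G_d$-path from $\{d+2\}$ must traverse an edge that splits an entry $\le p$ (here $\{12,6\}\to\{12,3,3\}$), so the paper's edge-by-edge argument does not literally apply there. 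Your split-then-absorb detour through $\{p,e_1-p,e_2,\ldots\}$ is exactly what is needed to cover this configuration, and the supporting verifications check out: the floor inequalities for the three moves, the fact that $e_1-p\equiv e_1$, $p+e_2\equiv e_2$ and $p+e_2+e_3\equiv 2(1-e_1)\not\equiv 1 \pmod p$ keep every intermediate partition in $\Omega_d$, and the observation that $n=2$ in case 1 always lands in sub-case A because $e_1+e_2=d+2\not\equiv 1 \pmod p$. The only thing I would ask you to make explicit is the corresponding check in sub-case B1, namely that $e_1+e_2+e_3\equiv \frac{p+3}{2}\not\equiv 1 \pmod p$, so the triple merge also stays inside $\Omega_d$; with that noted the induction is complete.
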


\begin{proof}
\textit{Case $d+2 \not\equiv 1 \pmod p$}. By Lemma \ref{lemmaminimalpartition}, we have that $\overrightarrow{E} \succ \{d+2\}$ for all $\overrightarrow{E} \in \Omega_d$. Thus, it suffices to show that all partitions having at least one entry greater than or equal to $p$ can be connected with $\{d+2\}$ in $C_d$.

Suppose $\overrightarrow{E}=\{e_1,\ldots,e_n\}$ with $e_1 > p$. Then there is a ``path'' from $\{d+2\}$ to $\{e_1,\ldots,e_n\}$ in $G_d$ such that each partition in the path has at least one entry greater than $p$. By Proposition \ref{cortype3}, all the partitions in the path are in the same connected component in $C_d$. In other words, one can go from $\{d+2\}$ to $\overrightarrow{E}$ in $C_d$.

If $\overrightarrow{E}=\{p,e_2, \ldots,e_n\}$ then $\lfloor \frac{p+e_2-1}{p}\rfloor=1+\lfloor \frac{e_2-1}{p}\rfloor>\lfloor \frac{p-1}{p}\rfloor + \lfloor \frac{e_2-1}{p}\rfloor=\lfloor \frac{e_2-1}{p}\rfloor$. Thus, there is an edge from $\overrightarrow{E}'=\{p+e_2, \ldots, e_n\}$ to $\overrightarrow{E}$ in $C_d$ by Lemma \ref{theoremclosure}$(a)$. By the argument in the previous paragraph, $\overrightarrow{E}'$ is in the same component with $\{d+2\}$, then so is $\overrightarrow{E}$.

\textit{Case $d+2 \equiv 1 \pmod p$}. Consider a partition $\overrightarrow{E}=\{e_1,\ldots,e_n\} \in \Omega_d$. Suppose, without loss of generality, that $e_1 >p$ and $e_n \not\equiv 0 \pmod{p}$. Then, the number $e'=\sum_{i=1}^{n-1}e_i$ is greater than $p$ and not congruent to $1$ modulo $p$. Thus, $\overrightarrow{E}=\{e_1,\ldots,e_n\} \prec \overrightarrow{E}'=\{e',e_n\}$, and by the previous discussion, $\{e'\}$ and $\{e_1, \ldots, e_{n-1}\}$ are linked in $C_{e'-2}$. Thus, $\overrightarrow{E}'$ and $\overrightarrow{E}$ lie in the same connected component in $C_d$.
\end{proof}

\begin{remark}
Corollary \ref{corhighlevel} implies that the problem of determining whether $\mathcal{AS}_g$ with $d+2 \not\equiv 1 \pmod p$ is connected can be reduced to the study of whether all strata indexed by partitions with all entries less than $p$ contain in their closure at least one stratum enumerated by a partition with at least one entry greater than or equal to $p$.
\end{remark}

\subsection{Case $p=3$}

\begin{proposition}
\label{propchar3}

The strata $\overrightarrow{E}_1, \overrightarrow{E}_2$ satisfy $\overrightarrow{E}_1 \prec \overrightarrow{E}_2$ if and only if $\Gamma_{\overrightarrow{E}_1}$ is in the closure of $\Gamma_{\overrightarrow{E}_2}$.
\end{proposition}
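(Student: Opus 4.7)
The ``only if'' direction is immediate from Lemma 4.3 of \cite{MR2985514}: a closure containment forces the existence of a chain of oriented edges in $G_d$ from $\overrightarrow{E}_1$ to $\overrightarrow{E}_2$, which in particular forces $\overrightarrow{E}_1 \prec \overrightarrow{E}_2$. For the ``if'' direction, the plan is to reduce to a single edge of $G_d$ and then verify, for $p=3$, that every such edge is realized by a closure relation. By the definition of $\prec$, whenever $\overrightarrow{E}_1 \prec \overrightarrow{E}_2$ there is a sequence $\overrightarrow{E}_1 = \vec{F}_0 \to \vec{F}_1 \to \cdots \to \vec{F}_k = \overrightarrow{E}_2$ of edges in $G_d$. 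Since ``lies in the closure'' is transitive, it suffices to show each $\Gamma_{\vec{F}_i}$ lies in the closure of $\Gamma_{\vec{F}_{i+1}}$. Moreover, any single edge $\vec{F}_i \to \vec{F}_{i+1}$ affects only one entry, so by Proposition \ref{propreduce} (using the trivial/constant deformation at all unchanged entries) we may further reduce to the case where $\overrightarrow{E}_1 = \{e\}$ has a single entry.

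In this reduced situation, and with $p=3$, the edges of $G_d$ from $\{e\}$ come in exactly two flavors. A Type~2 edge $\{e\}\to\{e_1,e_2,e_3\}$ requires $e_i\equiv (p+1)/2 = 2\pmod 3$ for each $i$, and the closure containment is immediate from Corollary~\ref{openquestion1}(b). For a Type~1 edge $\{e\}\to\{e_1,e_2\}$, by Corollary~\ref{openquestion1}(a) it suffices to verify that
\[
\lfloor (e-1)/3 \rfloor \;>\; \lfloor (e_1-1)/3 \rfloor + \lfloor (e_2-1)/3 \rfloor.
\]
Since neither of $e_1,e_2$ is $\equiv 1\pmod 3$, there are only two cases to check. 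If $e\equiv 0\pmod 3$, then both $e_1,e_2\equiv 0\pmod 3$ (the only other compatible residue pair, $(1,2)$ up to swap, is excluded), and writing $e=3m$, $e_1=3a$, $e_2=3b$ with $a+b=m$ and $a,b\ge 1$, the inequality reduces to $m-1 > (a-1)+(b-1)$, which is true. If $e\equiv 2\pmod 3$, then up to swapping one has $e_1\equiv 0$ and $e_2\equiv 2 \pmod 3$, and a direct computation with $e=3m+2$, $e_1=3a$, $e_2=3b+2$, $a+b=m$, $a\ge 1$, gives $\lfloor (e-1)/3\rfloor = m$ and $\lfloor (e_1-1)/3\rfloor+\lfloor (e_2-1)/3\rfloor = (a-1)+b = m-1$. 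In both cases the strict inequality holds.

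Combining the two previous paragraphs, every edge of $G_d$ is realized by a closure containment in $C_d$, and transitivity then upgrades the chain from $\overrightarrow{E}_1$ to $\overrightarrow{E}_2$ to the desired closure relation. The only real content is the residue bookkeeping in Type~1, which I expect to be the main (but entirely elementary) obstacle; the crucial feature of $p=3$ that makes the argument go through is that there is no ``bad'' residue pair of the form $(\bar e_1,\bar e_2)$ with $2 < \bar e_1 + \bar e_2 \le p$ and $\bar e_1\bar e_2\neq 0$ obstructing Corollary~\ref{openquestion1}(a), because the list of permissible residues $\{0,2\}$ modulo $3$ is too short to produce one.
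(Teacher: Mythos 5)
Your proposal is correct and follows essentially the same route as the paper: reduce to a single edge of $G_d$, invoke the deformation results (Theorem \ref{theoremclosure}(a)/(b), equivalently Corollary \ref{openquestion1}) for the two edge types, and verify the floor inequality for type-1 edges by residue bookkeeping modulo $3$. The paper packages that last computation slightly differently (writing $e_i-1=3a+u$ with $1\le u<3$ and noting $u+v+1$ must exceed $3$), but your case split on $\bar e$ is an equivalent calculation, and your explicit appeal to transitivity and Proposition \ref{propreduce} only spells out what the paper leaves implicit.
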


\begin{proof}

``If'' follows from Lemma 4.3 of \cite{MR2985514}.

Conversly, suppose $\overrightarrow{E}_1$ and $\overrightarrow{E}_2$ are connected by an edge of the form $\{e\} \rightarrow \{e_1,e_2\}$. Suppose moreover that $e_2-1=3a+u$ and $e_1-1=3b+v$ with $1 \le u,v <3$. Then $e-1=e_1+e_2-1=3(a+b)+u+v+1$. In order for $3 \nmid (e-1)$, we need $u+v+1 \ne 3$. Moreover, since $u,v \ge 1$, we must have $u+v+1>3$, thus $\lfloor \frac{e-1}{3} \rfloor > \lfloor \frac{e_1-1}{3} \rfloor + \lfloor \frac{e_2-1}{3} \rfloor$. Hence, by Lemma \ref{theoremclosure} $(a)$,  $\Gamma_{\overrightarrow{E}_1}$ is in the closure of $\Gamma_{\overrightarrow{E}_2}$. If the edge connecting $\overrightarrow{E}_1$ and $\overrightarrow{E}_2$ is of the form $\{e\} \rightarrow \{e_1,e_2,e_3\}$, we get the same result by Lemma \ref{theoremclosure} $(b)$. Thus, one can conclude that for any $\overrightarrow{E}_1 \prec \overrightarrow{E}_2$, $\Gamma_{\overrightarrow{E}_1}$ is in the closure of $\Gamma_{\overrightarrow{E}_2}$.

\end{proof}

\begin{proposition}
When the base field has characteristic $3$, the moduli space $\mathcal{AS}_g$ is connected for any value of $g$.

\end{proposition}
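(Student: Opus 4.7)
The plan is to deduce this from the two preceding propositions, using the observation at the start of \S\ref{closuresection} that connectedness of $C_d$ forces connectedness of $\mathcal{AS}_g$. The main point is that when $p=3$ the combinatorial graph $G_d$, which is easy to analyze, actually agrees with the geometric closure graph $C_d$, which controls the topology of the moduli space.

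First, I would invoke Proposition \ref{propchar3}: it shows that for $p=3$ and any $\overrightarrow{E}_1 \prec \overrightarrow{E}_2$ connected by a single edge in $G_d$, the stratum $\Gamma_{\overrightarrow{E}_1}$ lies in the closure of $\Gamma_{\overrightarrow{E}_2}$. In particular every edge of $G_d$ is also an edge of $C_d$, so $G_d$ is a subgraph of $C_d$ on the same vertex set $\Omega_d$.

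Next, by Proposition \ref{connectednessfirstgraph}, the graph $G_d$ is connected for every $d$ when $p=3$. Since $G_d\subseteq C_d$ with the same vertices, $C_d$ is connected as well.

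Finally, the irreducible strata $\{\Gamma_{\overrightarrow{E}}\}_{\overrightarrow{E}\in \Omega_d}$ partition $\mathcal{AS}_g$, each is irreducible hence connected, and whenever an edge in $C_d$ joins two partitions one of the corresponding strata meets the closure of the other, so the two sit in a single topological connected component of $\mathcal{AS}_g$. Combined with the connectedness of $C_d$, this shows that all strata lie in the same connected component, and therefore $\mathcal{AS}_g$ itself is connected. There is no real obstacle here: the substantive work was already done in Propositions \ref{connectednessfirstgraph} and \ref{propchar3} (the latter relying on the deformation construction in Theorem \ref{deformationtheorem}$(a),(b)$), and the present statement is a direct synthesis.
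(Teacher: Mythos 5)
Your argument is correct and follows exactly the same route as the paper: Proposition \ref{propchar3} shows $G_d$ is a subgraph of $C_d$, Proposition \ref{connectednessfirstgraph} gives connectedness of $G_d$, and the discussion at the start of \S\ref{closuresection} transfers connectedness of $C_d$ to $\mathcal{AS}_g$. Nothing to add.
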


\begin{proof}

Just like the case $p=2$, Proposition \ref{propchar3} implies that the graph $G_d$ is a subgraph of $C_d$. Hence, in order to show that $\mathcal{AS}_g$ is connected, it suffices to prove that $G_d$ is connected, which follows from Proposition \ref{connectednessfirstgraph}.

\end{proof}

\subsection{Case $p=5$}

\begin{proposition} Given $p=5$, the graph $C_d$ is connected when $d \ge 7$. In particular, $\mathcal{AS}_g$ is connected when $g \ge 14$.

\end{proposition}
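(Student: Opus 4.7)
The plan is to combine Corollary~\ref{corhighlevel} with Theorem~\ref{theoremclosure} to show that every vertex of $C_d$ lies in a single connected component. By Corollary~\ref{corhighlevel}, every partition with at least one entry $\geq 5$ (call it \emph{big-entry}) already sits in a component containing some second-level partition; so the task reduces to (i) joining each \emph{small-entry} partition (with all entries in $\{2,3,4\}$) to a big-entry partition by a single $C_d$-edge, and (ii) in the case $d+2 \equiv 1 \pmod 5$, linking together all second-level partitions of $d+2$.

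For (i), I would dispatch a case analysis on the multiset of entries of $\vec{E}$, applying the clauses of Theorem~\ref{theoremclosure} in each case: if $\vec{E}$ contains a $\{3,4\}$ or $\{4,4\}$ pair, merge that pair into a $7$ or $8$ via clause~(a); if $\vec{E}=\{4,2^k\}$ with $k\geq 3$, collapse a $\{4,2,2\}$ subpartition into an $8$ via clause~(c); if $\vec{E}$ contains three $3$'s, collapse them into a $9$ via clause~(b); if $\vec{E}$ contains $\{3,3,2,2\}$, collapse it into a $10$ via clause~(e); if $\vec{E}=\{3,2^{n-1}\}$ with $n\geq 4$, collapse $\{3,2,2,2\}$ into a $9$ via clause~(e); and if $\vec{E}=\{2^n\}$ with $n\geq 5$, collapse five $2$'s into a $10$ via clause~(d). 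The bound $d\geq 7$ ensures every small-entry partition fits at least one case; for instance, the only small-entry partitions at $d=7$ are $\{3,3,3\}$, $\{3,2,2,2\}$, and $\{4,3,2\}$, all covered above.

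For (ii), which arises when $d+2 \equiv 1 \pmod 5$ (i.e.\ $d \equiv 4 \pmod 5$), I would show each second-level partition $\{a,b\}$ of $d+2$ is connected in $C_d$ to the canonical second-level partition $\{d,2\}$ via a chain through three-part refinements. The guiding observation is that a triple $\{c_1,c_2,c_3\}$ is simultaneously a refinement of each of the three second-level partitions $\{c_i+c_j,c_k\}$ obtained by pairing two of its parts, and the corresponding edge $\{c_i+c_j,c_k\}\to\{c_1,c_2,c_3\}$ lies in $C_d$ exactly when the floor-inequality of Theorem~\ref{theoremclosure}(a) holds, equivalently when $((c_i-1)\bmod 5)+((c_j-1)\bmod 5)\geq 4$. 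Since valid entries of $\Omega_d$ have $(e-1)\bmod 5 \in\{1,2,3,4\}$, there is enough residue flexibility to construct such chains; the explicit example $\{2,9\}\sim\{2,4,5\}\sim\{4,7\}\sim\{3,4,4\}\sim\{3,8\}$ in $C_9$ is the model that generalizes uniformly to larger $d\equiv 4 \pmod 5$.

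The main obstacle is step (ii): uniformly arranging the residues so that every step of the chain is a valid $C_d$-edge requires careful bookkeeping as $d$ grows through the arithmetic progression $d \equiv 4 \pmod 5$, but the flexibility among residues in $\{1,2,3,4\}$ makes this feasible. Once (i) and (ii) are established, the connectedness of $C_d$, and hence that of $\mathcal{AS}_g$ for $g = d(p-1)/2 = 2d \geq 14$, follows from the topological discussion at the start of Section~\ref{closuresection}.
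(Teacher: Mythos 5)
Your step (i) — connecting every partition with all entries in $\{2,3,4\}$ to a partition with a large entry via the clauses of Theorem~\ref{theoremclosure}, and then invoking Corollary~\ref{corhighlevel} — is essentially the paper's own argument, and your case analysis ($\{3,4\}$ or $\{4,4\}$ pairs via (a), $\{4,2,2\}$ via (c), three $3$'s via (b), five $2$'s via (d), and the residual $\{3,2,2,2\}$, $\{3,3,2,2\}$ via (e)) is complete for $d\ge 7$. The problem is step (ii). You explicitly defer the entire content of that step to ``careful bookkeeping,'' exhibiting only one chain in $C_9$ and asserting it ``generalizes uniformly.'' It does not generalize in any automatic way: as $d$ grows through $d\equiv 4\pmod 5$ the number of second-level partitions grows (for $d=14$ there are already five: $\{2,14\},\{7,9\},\{12,4\},\{3,13\},\{8,8\}$), and a pairwise linkage of all of them through three-part refinements requires a genuine construction, not just the observation that residues in $\{1,2,3,4\}$ leave room. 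Note also that your stated edge criterion should be $((c_i-1)\bmod 5)+((c_j-1)\bmod 5)\ge 5$ rather than $\ge 4$, since equality with $4$ forces the merged entry to be $\equiv 1\pmod 5$ and hence inadmissible; this matters when you try to split an entry $\equiv 4\pmod 5$, where the only usable residue pair is $(4,3)$ up to order.

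The paper closes this gap by a different and cleaner route: it does not connect second-level partitions to one another directly, but observes that every second-level partition refines to a maximal partition consisting only of $2$'s and $3$'s, namely $\{5b_1+3,5b_2+3\}\prec\{3^{b_1+b_2+2},2^{b_1+b_2}\}$ and $\{5a_1+2,5a_2+4\}\prec\{3^{a_1+a_2},2^{a_1+a_2+3}\}$, with each branch internally connected by the step-(i) machinery. These two maximal partitions differ, so the paper routes the $(2,4)$-type through an intermediate three-part partition containing a $9$ (possible since $d+2\ge 11$ forces some $a_2\ge 1$), which then refines to the \emph{same} maximal partition $\{3^{b_1+b_2+2},2^{b_1+b_2}\}$ as the $(3,3)$-type. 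This replaces your unbounded family of chains with a single fixed maneuver. As written, your proposal establishes the case $d+2\not\equiv 1\pmod 5$ but leaves the case $d+2\equiv 1\pmod 5$ unproved.
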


\begin{proof}

When $g \ge 14$, then $d+2 \ge 9$ so $d \ge 7$.

\bigskip

\emph{Case $d+2 \not\equiv 1 \pmod 5$}: It suffices to show that every partition of $\Omega_d$ lies in the same connected component with $\{d+2\}$ in $C_d$.

\begin{itemize}
    
\item  If $\overrightarrow{E}=\{e_1,\ldots,e_n\}$ has at least one entry greater than or equal to $5$, $\overrightarrow{E}$ is in the same component with $\{d+2\}$ by Proposition \ref{cortype3}.

\item If $\overrightarrow{E}=\{e_1,\ldots,e_n\}$ has no entry greater than $5$, then $e_i$ can only be $4,3$ or $2$. 

\noindent Suppose there exists an entry equal to $4$, say $e_1=4$. Since $d+2 \ge 9$, there exists at least one more entry, say $e_2$. Let $e'=e_1+e_2$. If $e_2=3$ or $e_2=4$, then $e'\not\equiv 1 \bmod 5$ and $\lfloor \frac{e'-1}{5} \rfloor = \lfloor \frac{e_1+e_2-1}{5} \rfloor >  \lfloor \frac{4+2-1}{5} \rfloor =1$ and $\lfloor \frac{e_1-1}{5} \rfloor +  \lfloor \frac{e_2-1}{5} \rfloor =0$ . Thus, by Lemma \ref{theoremclosure}$(a)$, $\Gamma_{\{e'\}}$ is in the closure of $\Gamma_{\{e_1,e_2\}}$, that means $\Gamma_{\{e_1,\ldots,e_n\}}$ is in the closure of $\Gamma_{\{e',\ldots,e_n\}}$ which is in the same component with $\Gamma_{\{d+2\}}$. If $e_i=2$ for all $i \ge 2$, then $n > 3$. Thus, there is an edge from $\{8,e_4,\ldots,e_n\}$ to $\{4,2,2,e_4,\ldots,e_n\}$ in $C_d$ by Lemma \ref{theoremclosure}$(c)$, and $\{8,e_4,\ldots,e_n\}$ is in the same component with $\{d+2\}$.

\noindent Suppose all entries are equal to $2$ or $3$. If there are more than three entries equal to $3$ or more than $5$ entries equal to $2$, then, by Lemma \ref{theoremclosure}$(b)$ and $(d)$, there is an edge in $C_d$ that connects $\overrightarrow{E}$ with a partition $\overrightarrow{E}'$, which is different from $\overrightarrow{E}$ by edges in $G_d$ of the form $\{9\} \rightarrow \{3,3,3\}$ or $\{10\} \rightarrow \{2,2,2,2,2\}$. So, $\overrightarrow{E}$ is in the same component with  $\{d+2\}$. Otherwise, since $d+2 \ge 9$, we are left with the only possible partitions: $\{3,2,2,2\}$ or $\{3,3,2,2\}$. These particular cases are solved by Lemma \ref{theoremclosure} $(e)$ . 

\end{itemize}

\bigskip

\emph{Case $d+2 \equiv 1 \pmod 5$}: All the minimal deformations on the second level are of the form $\{5a_1+2,5a_2+4\}$ or $\{5b_1+3,5b_2+3\}$ where $\sum_i a_i=\sum_j b_j$. It follows that:

\[\{5b_1+3,5b_2+3\} \prec \{\underbrace{3,\ldots,3}_{b_1+b_2+2},\underbrace{2,\ldots,2}_{b_1+b_2}\} \textit{ and } \{5a_1+2,5a_2+4\} \prec \{\underbrace{3,\ldots,3}_{a_1+a_2},\underbrace{2,\ldots,2}_{a_1+a_2+3}\}.\]

\noindent Thus, by an argument similar to the case $d+2 \not\equiv 1 \bmod 5$, all partitions on the branches started by ones of the form  $\{5a_1+2,5a_2+4\}$ (or $\{5b_1+3,5b_2+3\}$) are in the same components in $C_d$.

In order to show $C_d$ is connected, it now suffices to show that $\{5a_1+2,5a_2+4\}$ and $\{5b_1+3,5b_2+3\}$ are in the same component in $G_d$ for some values of $a_1,a_2,b_1$ and $b_2$. Again, by the condition that $d+2 \ge 9$, the smallest $d+2$ that can be congruent to $1$ mod $5$ is $11$. Thus, there exists in the second level a partition of the form $\{5a_1+2,5a_2+4\}$ where $a_2 \ge 1$. Whence $\{5a_1+2,5a_2+4\} \prec \{5a_1+2,5(a_2-1),9\} \prec \{3,\ldots,3,2,\ldots,2\}$, where the number of entries of the latter partition equal to $3$ is $a_1+a_2-1+3=a_1+a_2+2=b_1+b_2+2$ and the number of entries equal to $2$ is $a_1+1+a_2-1=a_1+a_2=b_1+b_2$. Thus, all partitions are in the same component in $C_d$.

\end{proof}

\subsection{Case $p>5$}

\begin{proposition}
Given $p>5$, if $d\ge \frac{p(p-1)^2}{4}$ or $d \le 1$ then $C_d$ is connected. In particular, the moduli space $\mathcal{AS}_g$ is connected when $g\ge \frac{(p^3-2p^2+p-8)(p-1)}{8}$ or $g \le \frac{p-1}{2}$.
\end{proposition}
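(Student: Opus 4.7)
The case $d \le 1$ (equivalently $g \le (p-1)/2$) is immediate: by Corollary \ref{corollaryirreducible}, $\mathcal{AS}_g$ is already irreducible, hence connected. So assume $d \ge p(p-1)^2/4$.

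I plan to mimic the structure of the $p=5$ proof. By Corollary \ref{corhighlevel}, every vertex of $C_d$ whose partition contains an entry $\ge p$ already lies in one connected component when $d+2 \not\equiv 1 \pmod p$, or is linked to some level-$2$ partition when $d+2 \equiv 1 \pmod p$. In the latter subcase, the hypothesis $d \ge p(p-1)^2/4$ forces $d+2 = Np+1$ with $N \ge 2$; and because both entries $e_1,e_2$ of a level-$2$ minimal partition must be $\not\equiv 0, 1 \pmod p$ while $e_1+e_2 \ge 2p+1$, the larger of the two is automatically $\ge p+2$. Hence in both subcases \emph{every} partition with some entry $\ge p$ lies in a common connected component of $C_d$, and it suffices to show: each partition $\overrightarrow{E} = \{e_1, \ldots, e_n\}$ with all entries in $\{2, \ldots, p-1\}$ admits a coarsening, reachable by some move of Theorem \ref{theoremclosure}, that contains an entry $\ge p$.

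I would dispatch this by a case split on the structure of $\overrightarrow{E}$. If $p-1$ appears as an entry (and $n \ge 3$, which is automatic for $d$ large), move $(c)$ of Theorem \ref{theoremclosure} combines $\{p-1, e_i, e_j\}$ into a single entry $\ge p+3$. If instead some pair satisfies $e_i+e_j \ge p+1$ with $e_i+e_j \not\equiv 1 \pmod p$, move $(a)$ combines them to form an entry $\ge p+1$. When neither of these succeeds, a short check forces the entries of $\overrightarrow{E}$ to lie in $\{2,\ldots,(p-1)/2\}$, apart from possibly a single ``exceptional'' entry $M \in \{(p+1)/2, \ldots, p-2\}$ whose presence compels every other entry to be $\le p+1-M \le (p-1)/2$. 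In either sub-situation the entries in the range $\{2,\ldots,(p-1)/2\}$ number at least $2(d+2-M)/(p-1) \ge p(p-1)/2 - O(1)$, distributed among $(p-3)/2$ allowed values. A direct pigeonhole estimate
\[
\sum_{v=2}^{(p-1)/2} (p+1-v) \;=\; \frac{3p^2 - 8p - 3}{8} \;<\; \frac{4p^2 - 4p}{8} \;=\; \frac{p(p-1)}{2}
\]
then forces some value $v$ to appear at least $p-v+2$ times, and move $(d)$ collapses those copies into a single entry $v(p-v+2) \ge 2p > p$, producing the desired coarsening.

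The main obstacle is this final pigeonhole step: one must verify that the bound $d \ge p(p-1)^2/4$ is strong enough to dominate the worst-case ``optimally spread'' partition, and must track the effect of the exceptional entry $M$ on the count. The ``in particular'' clause on $g$ then follows from the identity $g = d(p-1)/2$; the stated threshold $g \ge (p^3 - 2p^2 + p - 8)(p-1)/8$ corresponds to $d \ge p(p-1)^2/4 - 2$, a marginal weakening of the $d$-bound that arises naturally from passing to the integer $g$.
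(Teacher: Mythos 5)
Your overall strategy is the same as the paper's (use Proposition \ref{cortype3} and Corollary \ref{corhighlevel} to put all partitions with an entry $\ge p$ into one component, then use a pigeonhole argument with move $(d)$ of Theorem \ref{theoremclosure} to coarsen any all-small partition into one with a large entry), but two steps fail as written. The more serious one is the subcase $d+2\equiv 1\pmod p$, where your reduction is circular: Corollary \ref{corhighlevel}(2) only links each partition having an entry $\ge p$ to \emph{some} level-two partition, and observing that every level-two partition itself has an entry $\ge p+2$ does not connect two \emph{different} level-two partitions to each other --- that is exactly the missing content, since in this subcase there is no single minimal vertex $\{d+2\}$ and the level-two partitions form several ``roots.'' The paper closes this by refining every level-two partition $\{a+pe_1,\,p+1-a+pe_2\}$ through an explicit chain in $C_d$ (via partitions containing the entry $2p$, then move $(d)$ with $n=1$) down to the one maximal partition $\{2,\dots,2,3\}$ or $\{2,\dots,2,3,3\}$, which serves as the common meeting point. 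Some argument of this kind is indispensable.

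Second, your structural claim in the ``stuck'' case is false: the value $(p+1)/2$ can occur arbitrarily often, because two copies of it sum to exactly $p+1\equiv 1\pmod p$, so move $(a)$ never applies to such a pair (take $p=7$ and $\overrightarrow{E}=\{4,4,\dots,4\}$). Hence ``at most one exceptional entry $M\ge (p+1)/2$'' is wrong, and the quantity you feed into the pigeonhole --- the number of entries lying in $\{2,\dots,(p-1)/2\}$ --- can be zero, so the bound $2(d+2-M)/(p-1)$ is not available. The repair is to split as the paper does (either all entries are $\le (p+1)/2$, or there is a single entry $m$ with $(p+1)/2<m<p$ and then every other entry is $\le p+1-m\le(p-1)/2$) and to admit $v=(p+1)/2$ into the pigeonhole range in the first branch; move $(d)$ with $n=(p-1)/2$ still produces the entry $\tfrac{(p+1)(p+3)}{4}>p$, and the resulting counts are dominated by $d\ge p(p-1)^2/4$. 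A smaller unverified point: move $(c)$ requires $p-1+e_i+e_j\not\equiv 1\pmod p$, i.e.\ $e_i+e_j\not\equiv 2\pmod p$ (see Proposition \ref{proptype3}); in the genuinely stuck configurations containing $p-1$ all other entries equal $2$, so the condition holds, but you should say so. Your final observation that the stated $g$-threshold corresponds to $d\ge p(p-1)^2/4-2$ rather than $d\ge p(p-1)^2/4$ is a fair catch, but calling it a ``marginal weakening that arises naturally'' does not prove the $g$-statement for the two extra values of $d$.
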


\begin{proof}
 \emph{Case $d+2 \not\equiv 1 \bmod p$}: By Proposition \ref{cortype3}, all partitions of $\Omega_d$ with at least one entry greater than or equal to $p$ lie in the same connected component of $C_d$. Thus, it suffices to show that when $g \ge \frac{(p^3-2p^2+p-8)(p-1)}{8}$ i.e $d \ge \frac{p(p-1)^2}{4}$, every partition of the form $\{e_1,e_2,\ldots,e_n\}$ such that $e_i <p$ for all $i$ can be connected with some partition having an entry not less than $p$. 

Consider a partition of $d+2$ with all entries smaller than or equal to $\frac{p+1}{2}$. By the Pigeonhole Principle, we can guarantee to have $p$ entries with the same value $n$ between $1$ and $\frac{p-1}{2}$ when the partition has at least $\frac{p(p-1)}{2}$ entries. Thus, when $d \ge \frac{p(p-1)^2}{4}$, the partition we consider is in the same component with $\overrightarrow{E}=\{(n+1)p-n^2+1,e_2,\ldots,e_m\}$ for some $n$ between $1$ and $\frac{p-1}{2}$ by Lemma \ref{theoremclosure}$(d)$. The first entry of $\overrightarrow{E}$ is greater than $p$. Hence, $\overrightarrow{E}$ and $\{d+2\}$ are linked.

Consider a partition of $d+2$, called $\overrightarrow{E}=\{e_1,e_2, \ldots,e_n\}$, where one entry has value $m$ such that $\frac{p+1}{2}<m<p$. When $\sum_{i=1}^n e_i>\frac{p(p-1)^2}{4}$, we must have the sum of all entries except the one with value $m$ greater than $\frac{p(p-1)^2}{4}-m$. If there is another entry which has value greater than $p-m+1$, call the value $n$, then the corresponding stratum lies in the same connected component with one indexed by $\{m+n, \ldots\} \prec \{m,n, \ldots \}=\overrightarrow{E}$ by Theorem \ref{theoremclosure}(a). Otherwise, all other entries are smaller than or equal to $p-m+1$. Again by the Pigeonhole Principle, we can guarantee to have $p$ entries with the same values $n$ such that $1\le n\le p-m+1$ when the partitions have at least $p(p-m+1)+1$ entries (``$+1$'' coming from the entry with value $m$). Thus, if $\frac{p(p-1)^2}{4}-m< p(p-m+1)^2$, by a similar argument as in the previous paragraph,  $\overrightarrow{E}$ and $\{d+2\}$ are in the same connected component. In order to prove the above claim, we study the function:

\[ F(m):=p(p-m+1)^2-\dfrac{p(p-1)^2}{4}+m.\]




\noindent One can show that $F(m)$ is strictly increasing on the closed interval $[\frac{p+1}{2}, p-1]$. Moreover, the value of $F(m)$ at $(p+1)/2$ is:  $F(\frac{p+1}{2})=\frac{p+1}{2}>0$. Hence, $F(m)$ is positive on the whole interval $[\frac{p+1}{2}, p-1]$ and we are done.

\emph{Case $d+2 \equiv 1 \bmod p$}: All deformations on the second level are of the form $\{a+pe_1,p+1-a+pe_2\}$ where $2\le a \le p-1$. Suppose $d+2$ is odd. Then one of the entries of each partition in the second level is even and the another is odd. Thus, we have $\{a+pe_1,p+1-a+pe_2\} \prec \{2,2,\ldots,2,3\}$ for all possible $a$, $e_1$, $e_2$. Also, the number of entries equal to $2$ in the latter partition is greater than $p$ and one of the entries of the former partition is greater than $2p$ by the assumption that $d+2 \ge  \frac{p(p-1)^2}{4}$. We have a chain of partitions $\{a+pe_1,p+1-a+pe_2\} \prec \{2p,e+(e_1-2)p,p+1-a+pe_2\}$ (or $\{2p,e+e_1 p,1-a+p(e_2-1)\}$) $\prec \{2p,2,\ldots,2,3\} \prec \{2,2,\ldots,2,3\}$ where the first three are in the same component of $C_d$ by Corollary \ref{corhighlevel}. Applying Lemma \ref{theoremclosure}$(d)$ for $n=1$, we know that there is a path in $C_d$ connecting $\{2p,2,\ldots,2,3\}$ and $\{2,2,\ldots,2,3\}$. Furthermore, $\{a+pe_1,p+1-a+pe_2\}$ and $\{2p,2,\ldots,2,3\}$ are linked by Corollary \ref{corhighlevel}. Hence, the partition $\{a+pe_1,p+1-a+pe_2\}$ lies in the same connected component with $\{2,2,\ldots,2,3\}$. Similarly, if $d+2$ is even, $\{a+pe_1,p+1-a+pe_2\}$ can be linked with $\{2,2,\ldots,2,3,3\}$ in $C_d$ and the proof follows as in the odd case.

When $d \le 1$, i.e. when $d=1$ or $d=0$, then $\Omega_d$ has exactly one partition: either $\{3\}$ or $\{2\}$. Thus, the graph $C_d$ is connected.

\end{proof}

\subsection{Disconnectedness of $AS_g$}
\label{disconnectedsection}

\begin{proposition}
\label{propdisconnected}
Suppose $p \ge 5$. Then $\mathcal{AS}_g$ is disconnected when $\frac{p-1}{2} < g \le \frac{(p-1)^2}{2}$.

\end{proposition}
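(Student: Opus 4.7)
The hypothesis $(p-1)/2<g\le (p-1)^2/2$ translates into $2\le d\le p-1$, i.e.\ $4\le d+2\le p+1$. The plan is to prove that in this range every stratum $\Gamma_{\overrightarrow{E}}$, $\overrightarrow{E}\in\Omega_d$, is already closed in $\mathcal{AS}_g$. Since each part of any partition in $\Omega_d$ automatically lies in $\{2,\ldots,p\}$, one checks directly that $|\Omega_d|\ge 2$: the singleton $\{d+2\}$ belongs to $\Omega_d$ unless $d+2=p+1$, in which case $\{p-1,2\}$ does; in either case a second partition (say one into $2$'s, or into $2$'s and a single $3$) is available. Thus once each $\Gamma_{\overrightarrow{E}}$ is shown to be closed, $\mathcal{AS}_g$ will be a finite disjoint union of at least two closed (hence clopen) subsets, and therefore disconnected.

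The first technical step is to classify the edges of $G_d$ in this range. A type-$3$ edge $\{e\}\to\{e_1,e_2,e_3\}$ forces $e_i\equiv (p+1)/2\pmod p$ with $e_i\ge 2$, so $e\ge 3(p+1)/2>p+1\ge d+2\ge e$, a contradiction; so every edge is of type $\{e\}\to\{e_1,e_2\}$. For such an edge, $e\le d+2\le p+1$ and $e\not\equiv 1\pmod p$ give $e\le p$, while $e_i\le e-2\le p-2$, so $\lfloor (e-1)/p\rfloor=\lfloor (e_i-1)/p\rfloor=0$. The criterion in Corollary~\ref{openquestion1}(a) therefore fails, meaning $\Gamma_{\overrightarrow{E}_1}$ is not in the closure of $\Gamma_{\overrightarrow{E}_2}$, and by Corollary~\ref{openquestion1}(c) the two strata share the same dimension.

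Next I would promote this per-edge information to all $\overrightarrow{E}_1\prec\overrightarrow{E}_2$ in $\Omega_d$ with $\overrightarrow{E}_1\neq\overrightarrow{E}_2$ by chaining. Any such pair is joined by a sequence of $G_d$-edges, along which the dimension is constant, so $\dim\Gamma_{\overrightarrow{E}_1}=\dim\Gamma_{\overrightarrow{E}_2}$. If $\Gamma_{\overrightarrow{E}_1}\subset\overline{\Gamma_{\overrightarrow{E}_2}}$, then since $\Gamma_{\overrightarrow{E}_2}$ is open in its irreducible closure and is disjoint from $\Gamma_{\overrightarrow{E}_1}$, one would get $\dim\Gamma_{\overrightarrow{E}_1}<\dim\Gamma_{\overrightarrow{E}_2}$, a contradiction. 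Hence $\Gamma_{\overrightarrow{E}_1}$ is not in the closure of $\Gamma_{\overrightarrow{E}_2}$, and Corollary~\ref{corclosureintersect} upgrades this to $\Gamma_{\overrightarrow{E}_1}\cap\overline{\Gamma_{\overrightarrow{E}_2}}=\emptyset$. For incomparable pairs, Lemma~4.3 of \cite{MR2985514} together with the deformation-plus-splitting argument of Propositions~\ref{propclosurevariety} and~\ref{propreduce} rules out any intersection with the closure as well. Consequently $\overline{\Gamma_{\overrightarrow{E}}}=\Gamma_{\overrightarrow{E}}$ for every $\overrightarrow{E}\in\Omega_d$, and $\mathcal{AS}_g=\bigsqcup_{\overrightarrow{E}\in\Omega_d}\Gamma_{\overrightarrow{E}}$ is a finite disjoint decomposition into clopen sets, giving the desired disconnection. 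The main obstacle I anticipate is the chaining step: one must propagate a per-edge dimension equality across a long chain and verify that the stratification is well enough behaved for ``$\Gamma_{\overrightarrow{E}_1}$ not entirely contained in $\overline{\Gamma_{\overrightarrow{E}_2}}$'' to upgrade to disjointness (as already codified in Corollary~\ref{corclosureintersect} for the $\prec$-comparable case), as well as a careful treatment of the boundary case $d+2=p+1$, where the top partition $\{d+2\}$ drops out of $\Omega_d$ and the analysis must begin at the second level of $G_d$.
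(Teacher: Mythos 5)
Your proposal is correct and takes essentially the same route as the paper: all strata in this range have equal dimension, so no stratum lies in the closure of another, Corollary \ref{corclosureintersect} upgrades this to disjointness from closures, and the existence of at least two partitions in $\Omega_d$ then gives disconnectedness. The only real difference is cosmetic: the paper reads off the common dimension $d-1$ directly (citing Maugeais's Corollary 3.16, equivalently Proposition \ref{propdimension} with all entries $\le p$) and gets $|\Omega_d|\ge 2$ from Corollary \ref{corollaryirreducible}, whereas you derive the dimension equality by classifying edges and chaining the per-edge comparison, and exhibit the second partition explicitly.
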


\begin{proof}
If $g \le \frac{(p-1)^2}{2}$, then $d+2\le p+1$. This implies that all elements of $\Omega_d$ have entries with values less than $p$. By Corollary 3.16 of \cite{MR2223481}, each irreducible stratum of $\mathcal{AS}_g$ has dimension equal to $d-1$. Thus, no closure of one stratum contains another. Hence, by Corollary \ref{corclosureintersect}, each stratum is a connected component of $\mathcal{AS}_g$. Whence, if $g$ satisfies the second inequality and $\Omega_d$ has more than one partition in $\Omega_d$ then $\mathcal{AS}_g$ is disconnected. By Corollary \ref{corollaryirreducible}, $\mathcal{AS}_g$ is irreducible if and only if $g=0$ or $g=\frac{p-1}{2}$, which implies the first inequality.

\end{proof}

\begin{remark}
The only obstructions for a deformation that have been known thus far were the dimensions of the strata and the ``$\prec$'' relation of the partitions which says $\overrightarrow{E}_1 \prec \overrightarrow{E}_2$ if $\overrightarrow{E}_2$ is a refinement of $\overrightarrow{E}_1$. More precisely, there is a deformation from  $\Gamma_{\overrightarrow{E}_1}$ to $\Gamma_{\overrightarrow{E}_2}$ only if $\dim_k(\Gamma_{\overrightarrow{E}_1})< \dim_k(\Gamma_{\overrightarrow{E}_2})$ and $\overrightarrow{E}_1 \prec \overrightarrow{E}_2$. Our new approach, which was mentioned in Remark \ref{char5}, gives us another obstruction and improves Proposition \ref{propdisconnected}. In particular, we are able to show that for $p>5$, $\mathcal{AS}_g$ disconnected if $(p-1)/2<g \le 2(p-1)$.
\end{remark}

 \section{Proof of Technical Results}
 \label{technical}

 In this section, we give the proof of Theorem \ref{deformationtheorem}. Each subsection will prove one item of this theorem.
 
By the discussion at the beginning of \S \ref{closure}, it suffices to study relations between two partitions $\overrightarrow{E}_1=\{e\} \prec \overrightarrow{E}_2=\{e_1,e_2, \ldots, e_n\}$.

\subsection{Edge of Type $\{e\} \rightarrow \{e_1,e_2\}$}

Throughout this subsection, suppose $\overrightarrow{E}_1=\{e\} \prec \overrightarrow{E}_2=\{e_1,e_2\}$. Then there is an edge in $G_d$ from $\overrightarrow{E}_1$ to $\overrightarrow{E}_2$. Given $e=e_1+e_2$ such that $p \nmid (e-1)(e_1-1)(e_2-1)$. Without loss of generality, one can assume that $e_1 \ge e_2$. Let $pw$ be the smallest multiple of $p$ that is greater than or equal to $e_1$. Before going into the technical details, let us take a look at a concrete example.

\begin{example}
Let $p=5$, $g=10$ and consider $\overrightarrow{E}_1=\{7\}$ and $\overrightarrow{E}_2=\{4,3\}$. Then there is a deformation over $S=\spec k[[t]]$ given by the normalization of $\mathbb{P}^1_S$ over the extension of its fraction field that is defined by the following affine equation:

\[y^5-y=\dfrac{-2x+t}{(-2)x^5(x-t)^2}=H(x,t).\]

The special fiber has the affine equation:

\[y^5-y=\dfrac{1}{x^6},\]

\noindent which represents a $k$-point of $\Gamma_{\overrightarrow{E}_1}$.

On the generic fiber, when $t \neq 0$, the partial fraction decomposition of $H(x,t)$ is of the form:

\[\dfrac{-1}{2tx^5}+\dfrac{1}{2t^3x^3}+\dfrac{1}{t^4x^2}+\dfrac{3}{2t^5x}+\dfrac{1}{2t^4(x-t)^2}-\dfrac{3}{2t^5(x-t)}\]

\noindent One can obtain a $\mathbb{Z}/p$-curve that is isomorphic to the generic fiber by adding $\frac{1}{2tx^5}-\frac{1}{\sqrt[5]{2t}x}$ to $H(x)$. The curve is thus defined by the affine equation:

\[y^5-y=\dfrac{-1}{\sqrt[5]{2t}x}+\dfrac{1}{2t^3x^3}+\dfrac{1}{t^4x^2}+\dfrac{3}{2t^5x}+\dfrac{1}{2t^4(x-t)^2}-\dfrac{3}{2t^5(x-t)}.\]

\noindent Hence, the generic fiber is branched at two points $x=0$ and $x=t$, which have ramification jumps $3$ and $2$, respectively. Thus, it represents a $k((t))$-point of $\Gamma_{\overrightarrow{E}_2}$. 

\end{example}

\begin{remark}
The previous example also illustrates our main approach in the whole chapter. We will look for a rational function $H(x,t) \in \Frac(k[x,t])$  such that $H(x,0)=c/x^{e-1}$ and has a desired partial fraction decomposition in terms of $x$. 
\end{remark}

\begin{lemma}
\label{subprop}

Suppose $v \le u$. Then:

\[\sum_{k=0}^v {u \choose k}(-1)^k =\dfrac{(-1)^v}{v!} \prod_{l=1}^v (u-l). \]

\end{lemma}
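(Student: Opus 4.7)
The plan is to recognize that the right-hand side is exactly $(-1)^v\binom{u-1}{v}$, since by the falling-factorial formula $\binom{u-1}{v} = \frac{(u-1)(u-2)\cdots(u-v)}{v!} = \frac{1}{v!}\prod_{l=1}^v (u-l)$. So the lemma reduces to the familiar partial-sum identity
\[
\sum_{k=0}^v \binom{u}{k}(-1)^k \;=\; (-1)^v \binom{u-1}{v}.
\]

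First I would dispose of the base case $v=0$, where both sides equal $1$ (with the usual convention that an empty product is $1$). For the induction step, I would apply Pascal's rule $\binom{u}{k} = \binom{u-1}{k-1} + \binom{u-1}{k}$ inside the sum and telescope:
\[
\sum_{k=0}^v \binom{u}{k}(-1)^k \;=\; \sum_{k=0}^v \binom{u-1}{k-1}(-1)^k + \sum_{k=0}^v \binom{u-1}{k}(-1)^k.
\]
Reindexing the first sum by $j=k-1$ gives $-\sum_{j=-1}^{v-1}\binom{u-1}{j}(-1)^j$, and the $j=-1$ term vanishes since $\binom{u-1}{-1}=0$. When added to the second sum, every term with $0 \le k \le v-1$ cancels, leaving only $(-1)^v\binom{u-1}{v}$, as desired.

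Alternatively, the identity admits a one-line generating-function proof: the sum $\sum_{k=0}^v\binom{u}{k}(-1)^k$ is the coefficient of $x^v$ in the truncation of $(1-x)^u$, equivalently the coefficient of $x^v$ in $(1-x)^u/(1-x) = (1-x)^{u-1}$, which is $(-1)^v\binom{u-1}{v}$. Either version will do; I would present the inductive Pascal-telescoping argument since it is self-contained and avoids introducing power series. There is no genuine obstacle here — the hypothesis $v\le u$ only ensures that the binomial coefficients on both sides have their standard combinatorial meaning, while the algebraic identity itself is valid in any commutative ring.
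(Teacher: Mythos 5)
Your proof is correct, and it takes a genuinely different route from the paper's. The paper proves the identity by a direct induction on $v$: it peels off the top term $\binom{u}{n+1}(-1)^{n+1}=\frac{(-1)^{n+1}}{(n+1)!}\prod_{m=0}^{n}(u-m)$, invokes the inductive hypothesis for the remaining partial sum, and recombines the two products by explicit algebra to produce $\frac{(-1)^{n+1}}{(n+1)!}\prod_{l=1}^{n+1}(u-l)$. You instead recognize the right-hand side as $(-1)^v\binom{u-1}{v}$ and reduce the lemma to the classical partial alternating-sum identity, which your Pascal-rule telescoping then establishes directly --- note that once the two reindexed sums cancel, no inductive hypothesis is actually used, so what you call the ``induction step'' is really a one-shot computation and you could drop the induction framing entirely. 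Your version buys a conceptual identification with a standard identity (and a generating-function one-liner as a bonus), while the paper's version stays entirely at the level of the product formula it needs later, at the cost of slightly messier bookkeeping. One small remark: the paper starts its induction at $v=1$ and omits the case $v=0$ (where both sides are $1$ by the empty-product convention), which you handle explicitly; and your closing observation that $v\le u$ is not needed for the identity itself is accurate, since both sides vanish when $u\le v$ for $u$ a positive integer.
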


\begin{proof}

Induction on $v$.

\noindent For $v=1$:  

\[ \sum_{k=0}^1 {u \choose k}(-1)^k ={u \choose 0} -{u \choose 1}=1-u=\dfrac{(-1)^1}{1!} \prod_{l=1}^1 (u-l).\]

\noindent Suppose the proposition is true for $v=n$. Then, for $v=n+1$:

\begin{equation} \label{induction}
\begin{split}
\sum_{k=0}^{n+1} {u \choose k}(-1)^k & =\sum_{k=0}^n {u \choose k}(-1)^k + \dfrac{(-1)^{n+1}}{(n+1)!} \prod_{m=0}^{n} (u-m) \\
&=\dfrac{(-1)^n}{n!} \prod_{l=1}^n (u-l)+ \dfrac{(-1)^{n+1}}{(n+1)!} \prod_{m=0}^{n} (u-m)  \hspace{3mm}(\text{by induction}) \\
 & =\dfrac{(-1)^{n+1}}{(n+1)!} \prod_{l=1}^{n+1} (u-l) . 
\end{split}
\end{equation}

\end{proof}

\begin{lemma}
\label{type1lemma1}

Suppose $\lfloor \frac{e-1}{p} \rfloor > \lfloor \frac{e_1-1}{p} \rfloor + \lfloor \frac{e_2-1}{p} \rfloor$. Then $p \nmid {{e_2-1}\choose {pw-e_1}}$,  $p \nmid {{e_2-1}\choose {pw-e_1+1}}$ and $p \nmid \sum_{k=0}^{pw-e_1}{{e_2-1} \choose k} (-1)^{e_2-1-k}=:Z$.

\end{lemma}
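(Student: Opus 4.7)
The plan is to translate the hypothesis into a clean modular-arithmetic condition, after which all three divisibility claims reduce to Lucas's theorem and Lemma \ref{subprop}. Write $e_i - 1 = p a_i + s_i$ with $s_i \in \{1, \ldots, p-1\}$, which is possible since $p \nmid e_i - 1$. Then $e - 1 = p(a_1 + a_2) + (s_1 + s_2 + 1)$, and the floor inequality forces $s_1 + s_2 + 1 \geq p$; combined with $p \nmid e - 1$ (which rules out the case $s_1 + s_2 + 1 = p$), we get $s_1 + s_2 \geq p$. A brief case check, separating $p \mid e_1$ (where $s_1 = p-1$ and $pw = e_1$) from $e_1 \equiv s_1 + 1 \pmod p$ with $s_1 \leq p - 2$, shows uniformly that $pw - e_1 = p - 1 - s_1 =: v \in \{0, \ldots, p-2\}$.

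For the two binomial coefficients, since both $v$ and $v+1$ lie in $\{0, \ldots, p-1\}$, Lucas's theorem applied to $e_2 - 1 = p a_2 + s_2$ gives
\[
\binom{e_2 - 1}{v} \equiv \binom{s_2}{v} \pmod{p}, \qquad \binom{e_2 - 1}{v+1} \equiv \binom{s_2}{v+1} \pmod{p}.
\]
The hypothesis $s_1 + s_2 \geq p$ rewrites as $s_2 \geq v + 1$, so in both $\binom{s_2}{v}$ and $\binom{s_2}{v+1}$ every numerator and denominator factor lies in $\{1, \ldots, p-1\}$, hence both are units modulo $p$.

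For $Z$, I would apply Lemma \ref{subprop} with $u = e_2 - 1$; factoring out $(-1)^{e_2-1}$ and using $(-1)^{-k} = (-1)^k$ yields
\[
Z = \frac{(-1)^{e_2 - 1 + v}}{v!}\prod_{l=1}^{v}(e_2 - 1 - l).
\]
Since $v \leq p - 2$, $v!$ is a unit modulo $p$, and a factor $e_2 - 1 - l \equiv s_2 - l \pmod p$ vanishes modulo $p$ exactly when $l = s_2$; as $l$ ranges over $\{1, \ldots, v\}$ and $s_2 \geq v + 1$, no factor is divisible by $p$, whence $p \nmid Z$. The main obstacle is purely the initial book-keeping, in particular correctly reading off $pw - e_1 = p - 1 - s_1$ in the boundary case $p \mid e_1$ and using $p \nmid e - 1$ to strengthen the floor inequality to $s_1 + s_2 \geq p$; beyond this, the proof is a mechanical Lucas/product computation.
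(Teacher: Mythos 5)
Your proof is correct and follows essentially the same route as the paper: both write $e_i-1=pa_i+s_i$, upgrade the floor hypothesis to $s_1+s_2\ge p$ using $p\nmid e-1$, identify $pw-e_1=p-1-s_1$, and invoke Lemma \ref{subprop} to turn $Z$ into a product of factors none of which is divisible by $p$. The only cosmetic differences are that you phrase the two binomial-coefficient claims via Lucas's theorem where the paper expands the coefficients as explicit products of consecutive integers (an equivalent computation), and you apply Lemma \ref{subprop} before reducing modulo $p$ rather than after.
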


\begin{proof}

Suppose $e_2-1=pa+u$, and $e_1-1=pb+v$ such that  $1 \le u,v <p$ and either $a \le b$ or $u \le v$ if $a=b$. Thus, $e-1=e_1+e_2-1=p(a+b)+u+v+1$, and $pw=p(b+1)$. Because $\lfloor \frac{e-1}{p} \rfloor > \lfloor \frac{e_1-1}{p} \rfloor + \lfloor \frac{e_2-1}{p} \rfloor$, we have $p+1 \le u+v+1 \le 2p-1$.

Now consider:

\[{{e_2-1}\choose {pw-e_1}}=\dfrac{(e_2-1)!}{(pw-e_1)!(e_1+e_2-1-pw)!}=\dfrac{1}{(pw-e_1)!} (e_1+e_2-pw) \ldots (e_2-1).\]

\noindent Because $e_1+e_2-pw=pa+u+v-p+2 \ge pa+2$, we have $p \nmid   (e_1+e_2-pw) \ldots (e_2-1)$. Thus, $p \nmid {{e_2-1}\choose {pw-e_1}}$.

\bigskip

Similarly,

\[{{e_2-1}\choose {pw-e_1+1}}=\dfrac{(e_2-1)!}{(pw-e_1+1)!(e_1+e_2-2-pw)!}=\dfrac{1}{(pw-e_1+1)!} (e_1+e_2-1-pw) \ldots (e_2-1).\]
 
\noindent Because  $e_1+e_2-1-pw=pa+u+v-p+1 \ge pa+1$, we have $p \nmid   (e_1+e_2-1-pw) \ldots (e_2-1)$. Thus, $p \nmid {{e_2-1}\choose {pw-e_1+1}}$.

\bigskip

Now we want to show $p \nmid Z=\sum_{k=0}^{pw-e_1} \dfrac{(e_2-k)\ldots (e_2-1)}{1\cdot 2 \ldots k} (-1)^{e_2-1-k}$. Taking reduction modulo $p$, it suffices to prove that:

\[p \nmid \sum_{k=0}^{p-v-1} {u \choose k} (-1)^k \textrm{ for } 1 \le u,v <p , u+v \ge p.\]

\noindent Apply Lemma \ref{subprop}, $\sum_{k=0}^{p-v-1} {u \choose k} (-1)^k=\dfrac{(-1)^{p-v-1}}{(p-v-1)!} \prod_{l=1}^{p-v-1} (u-l) $. Since $p \nmid (u-l)$ for any $1 \le l \le (p-v-1)$, we conclude that $p\nmid \sum_{k=0}^{p-v-1} {u \choose k} (-1)^k$.

\end{proof}

\begin{proposition}
\label{type1lemma2}

Suppose $\lfloor \frac{e-1}{p} \rfloor > \lfloor \frac{e_1-1}{p} \rfloor + \lfloor \frac{e_2-1}{p} \rfloor$. Then there exists a rational function $G(x,t) \in \Frac(k[x,t])$ that has partial fraction decomposition in $k(t)(x)$ of the form:

\[ G(x,t)= \sum_{i=1}^{e_1-1} \dfrac{a_i}{x^i} + \dfrac{a_{pw}}{x^{pw}} + \sum_{j=1}^{e_2-1} \dfrac{b_j}{(x-t)^j}\]

\noindent such that $a_{pw}, a_{e_1-1}, b_{e_2-1} \ne 0$; $a_i$,$b_j$ are elements of $k(t)$; and $G(x,0) \in \Frac(k[x])$ has exactly one pole of order $e-1$ at $x=0$.

\end{proposition}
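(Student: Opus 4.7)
The plan is to exhibit an explicit $G(x,t) = P(x,t) / [x^{pw}(x-t)^{e_2-1}]$ for a carefully chosen polynomial $P(x,t) \in k[x,t]$. Since the requirement that $G(x,0)$ have a unique pole of order $e-1$ at $x=0$ forces $P(x,0) = c \cdot x^{pw-e_1}$ for some $c \neq 0$, it is natural to take $\deg_x P = pw - e_1$. Writing $P(x,t) = \sum_{k=0}^{pw-e_1} p_k(t)\,x^k$, the partial-fraction coefficient $a_j$ of $1/x^j$ in $G$ is the Taylor coefficient of $x^{pw-j}$ in the expansion of $P(x,t)/(x-t)^{e_2-1}$ around $x=0$. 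Using the geometric-series expansion $(x-t)^{-(e_2-1)} = (-t)^{-(e_2-1)} \sum_{j \geq 0} \binom{e_2+j-2}{j} (x/t)^j$, the vanishing conditions $a_{e_1} = a_{e_1+1} = \cdots = a_{pw-1} = 0$ amount to a lower-triangular linear system of $pw - e_1$ equations in $p_0, p_1, \ldots, p_{pw-e_1}$.

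Substituting the ansatz $p_k(t) = \alpha_k\, t^{pw - e_1 - k}$ collapses the system into the single functional equation $B(x)\,(1-x)^{-(e_2-1)} = \alpha_0$ for $B(x) := \sum_{k \geq 0} \alpha_k x^k$, whose unique solution is $B(x) = \alpha_0 (1-x)^{e_2-1}$, i.e.\ $\alpha_k = \alpha_0 (-1)^k \binom{e_2-1}{k}$. The hypothesis $\lfloor (e-1)/p \rfloor > \lfloor (e_1-1)/p \rfloor + \lfloor (e_2-1)/p \rfloor$ implies $pw - e_1 \leq e_2 - 1$, so setting $\alpha_0 = 1$ yields the concrete candidate
\[
  P(x,t) \;=\; \sum_{k=0}^{pw-e_1}(-1)^k \binom{e_2-1}{k}\, x^k\, t^{pw-e_1-k}.
\]

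With this ansatz in hand, the three nonvanishing claims reduce directly to the three parts of Lemma \ref{type1lemma1}. The coefficient $a_{pw}$ equals $\alpha_0$ times a power of $t$ and is therefore nonzero by construction. The evaluation $P(x,0) = (-1)^{pw-e_1}\binom{e_2-1}{pw-e_1}\, x^{pw-e_1}$ is nonzero by the first claim of the lemma, confirming that $G(x,0)$ has a single pole of order exactly $e-1$. Applying the functional equation $B(x)(1-x)^{-(e_2-1)} = \alpha_0$ one step beyond the vanishing block identifies $a_{e_1-1}$ as a $t$-monomial multiple of $(-1)^{pw-e_1}\binom{e_2-1}{pw-e_1+1}$, nonzero by the second claim. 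Finally $b_{e_2-1} = P(t,t)/t^{pw}$ simplifies to a $t$-monomial times $\sum_{k=0}^{pw-e_1}(-1)^k \binom{e_2-1}{k}$, which by Lemma \ref{subprop} equals (up to sign) the quantity $Z$ of the third claim.

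The main obstacle is recognizing the correct ansatz; once the closed form $B(x) = \alpha_0(1-x)^{e_2-1}$ emerges from the triangular recursion, the three nonvanishing requirements each reduce to one of the three nondivisibility statements of Lemma \ref{type1lemma1}, which appear to have been designed with exactly this application in mind.
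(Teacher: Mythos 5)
Your proposal is correct and arrives at exactly the rational function used in the paper: your $P(x,t)$ is $(-1)^{e_2-1}$ times the numerator of the paper's equation (\ref{type1edge}), and the three nonvanishing claims are verified via the same three parts of Lemma \ref{type1lemma1} (together with Lemma \ref{subprop} for $b_{e_2-1}$). The only difference is presentational: you derive the binomial-coefficient formula by solving the triangular recursion in closed form via the generating-function identity (note $B(x)$ is really the degree-$(pw-e_1)$ truncation of $\alpha_0(1-x)^{e_2-1}$, since $pw-e_1<e_2-1$), whereas the paper first establishes existence through an invertible block-triangular linear system in the unknowns $a_i,b_j$ and then exhibits the same closed form.
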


\begin{proof}

If $G(x,t)$ is the rational function we are looking for, it should have the fraction form:

\[ G(x,t)= \dfrac{ \sum_{i=1}^{e_1-1} a_i x^{pw-i} (x-t)^{e_2-1} + a_{pw}(x-t)^{e_2-1}+ \sum_{j=1}^{e_2-1} b_j x^{pw} (x-t)^{e_2-1-j}}{x^{pw}(x-t)^{e_2-1}}=\dfrac{N(x,t)}{D(x,t)}.\]


\noindent We want the numerator of $G(x,t)$, called $N(x,t)$, to lie in $k[t][x]$ and $N(x,0)$ to be a $k^{\times}$- multiple of $x^{pw-e_1}$.

In order to construct such a function, it suffices to find values for $a_{pw}$, $a_1$, $\ldots$,$a_{e_1-1}$,$b_1$, $\ldots, b_{e_2-1}$ in $k(t)$ that satisfy the following conditions:

\begin{enumerate}
    \item Each term of $N(x,t)$ having power of $x$ less than $pw-e_1$ is a multiple of $t$.
    \item The term with power of $x$ equal to $pw-e_1$ is not a multiple of $t$.
    \item All the terms of $N(x,t)$ having $x$-degree greater than $pw-e_1$ are equal to $0$.
    \item $a_{pw}$, $a_{e_1-1}$ and $b_{e_2-1}$ are not equal to $0$.
\end{enumerate}


Observe that all the terms of $ b_j x^{pw} (x-t)^{e_2-1-j}$ have $x$-degree at least $pw$, which is strictly greater than $pw-e_1$. The terms of $a_i x^{pw-i} (x-t)^{e_2-1}$ have $x$-degree at least $pw-e_1+1$, which is also greater $pw-e_1$. Thus, all the terms of $N(x,t)$ with $x$-degree less than or equal to $pw-e_1$ lie in $a_{pw} (x-t)^{e_2-1}$. Note that $e_2-1 > pw-e_1$ follows from the assumption that  $\lfloor \frac{e-1}{p} \rfloor > \lfloor \frac{e_1-1}{p} \rfloor + \lfloor \frac{e_2-1}{p} \rfloor$.


The term of $N(x,t)$ with $x$-degree $n$, for $pw-e_1+1 \le n \le pw+e_2-2$, has coefficient of the form:

\[\sum_{\substack{pw-i+r=n \\ 1 \le i \le e_1-1 \\ 0 \le r \le e_2-1 }} a_i {{e_2-1} \choose {r}} (-t)^{e_2-1-r}+ \sum_{\substack{pw+l=n \\ 0 \le l \le e_2-1-j \\ 1 \le j \le e_2-1 }} b_j {{e_2-1-j} \choose {l}} (-t)^{e_2-1-j-r}+a_{pw} {{e_2-1} \choose n}(-t)^{e_2-1-n}\]

\noindent We want all these terms to be equal to $0$, i.e., $a_1, \ldots, a_{e_1-1},b_1, \ldots, b_{e_2-1}$ are the solutions of the linear system:

\[ \left( \begin{array}{cc}\

A_{ij}

\end{array} \right) \times \left( \begin{array}{cc}
b_1 \\
b_2 \\
\vdots \\
b_{e_2-2}\\
b_{e_2-1}\\
a_1 \\
a_2 \\
\vdots \\
a_{e_1-2} \\
a_{e_1-1}

\end{array} \right) = \left( \begin{array}{cc}
0 \\
0\\ 
\vdots \\
0\\
-a_{pw}\\
-a_{pw}(-t)\\
\vdots \\
\vdots \\
-a_{pw} {{e_2-1} \choose {pw-e_1+2}}(-t)^{e_2-1-(pw-e_1+2)} \\
-a_{pw} {{e_2-1} \choose {pw-e_1+1}}(-t)^{e_2-1-(pw-e_1+1)} 
\end{array} \right)\] 

Where $A_{ij}$ is a size $(e_1+e_2-2)$ square matrix of the form: 

\bigskip

\[
\renewcommand\arraystretch{1.3}
\mleft[
\begin{array}{ccccc|ccccc}
 1&  0&  \ldots&  0&  0&  1&  0&  \ldots&  0&  0 \\ 
 (e_2-2)(-t)&  1&  \ldots&  \ldots&  \ldots&  \ldots&  \ldots&  \ldots&  \ldots&  \ldots \\ 
 \ldots&  \ldots&  \ldots&  \ldots&  \ldots&  \ldots&  \ldots&  \ldots&  \ldots&  \ldots \\ 
 \ldots&  \ldots&  \ldots&  1&  0&  \ldots&  \ldots&  \ldots&  \ldots&  \ldots \\ 
 \ldots&  \ldots&  \ldots&  \ldots&  1&  \ldots&  \ldots&  \ldots&  \ldots&  \ldots\\ 
 \hline
 0&  0&  \ldots&  0&  0&  (-t)^{e_2-1}&  \ldots&  \ldots&  \ldots&  \ldots \\ 
 0&  0&  \ldots&  0&  0&  0&  (-t)^{e_2-1}&  \ldots&  \ldots&  \ldots \\ 
 \ldots&  \ldots&  \ldots&  \ldots&  \ldots&  \ldots&  \ldots&  \ldots&  \ldots&  \ldots \\ 
 0&  0&  \ldots&  0&  0&  0&  0&  \ldots&  (-t)^{e_2-1}&  \ldots\\ 
 0&  0&  \ldots&  0&  0&  0&  0&  \ldots&  0& (-t)^{e_2-1} 
\end{array}
\mright]
\]

\noindent The entry $a_{ij}$ of the matrix is the multiple of $b_j$ if $1 \le j \le e_2-1$ or the multiple of $a_{j-e_2+1}$ if $e_2 \le j \le e_1+e_2-2$ in the coefficient of the term with $x$-degree $pw+e_2-1-i$. The right hand side of the linear system is a $(e_1+e_2-2) \times 1$ matrix where the $i$-th row is the coefficient of $x$-degree $pw+e_2-1-i$ term of the polynomial $-a_{pw}(x-t)^{e_2-1}$. One can easily see that $A_{ij}$ is a block triangular matrix. The size of the blocks, clockwise from top, are $(e_2-1)\times (e_2-1)$, $(e_1-1)\times (e_1-1)$, $(e_2-1)\times (e_2-1)$, and $(e_1-1)\times (e_1-1)$. Thus, it is invertible and we can always find the desired values for $a_{pw}$, $a_1$, $\ldots$,$a_{e_1-1}$,$b_1$, $\ldots, b_{e_2-1}$.

Now consider:

\[ a_{pw} (x-t)^{e_2-1}= \sum_{r=0}^{e_2-1} a_{pw} {{e_2-1} \choose r} x^r (-t)^{e_2-1-r}. \]

\noindent Pick $a_{pw}=\dfrac{1}{t^{e_1+e_2-1-pw}}.$ Then  $G(x,t)$ has the form:

\begin{equation}
\label{type1edge}
    G(x,t)= \dfrac{ \sum_{r=0}^{pw-e_1} {{e_2-1} \choose r}x^r (-1)^{e_2-1-r} t^{pw-e_1-r}}{x^{pw}(x-t)^{e_2-1}}. 
\end{equation}

\noindent With that choice of $a_{pw}$, conditions $1, 2, 3$ and the part of condition $4$ that $a_{pw} \neq 0$ are satisfied.

 It now suffices to show that $ a_{e_1-1} \neq 0$ and $b_{e_2-1} \neq 0$. Consider the coefficient of the term of $N(x,t)$ with $x$-degree $pw-e_1+1$, which is equal to $0$ by the previous construction:

$$a_{e_1-1} (-t)^{e_2-1} + a_{pw} {{e_2-1} \choose {pw-e_1+1}}(-t)^{e_2-1-(pw-e_1+1)}=a_{e_1-1} (-t)^{e_2-1} +{{e_2-1} \choose {pw-e_1+1}} (-1)^{e-1-pw} \dfrac{1}{t}.$$

\noindent By Lemma \ref{type1lemma1}, $p \nmid {{e_2-1} \choose {pw-e_1+1}}.$ So, in order for the  $pw-e_1+1^{th}$ coefficient to be $0$, we must have $a_{e_1-1} \ne 0.$ In particular, we have $a_{e_1-1}={{e_2-1} \choose {pw-e_1+1}} (-1)^{e_1-e_2+1-pw} t^{-e_2}$.


Now, we want to prove that $b_{e_2-1} \ne 0$. Consider:

\[H(x,t)=G(x,t)(x-t)^{e_2-1}=\dfrac{ \sum_{r=0}^{pw-e_1} {{e_2-1} \choose r}x^r (-1)^{e_2-1-r} t^{pw-e_1-r}}{x^{pw}}.\]

\noindent The coefficient of the leading term of the expansion of $H(x,t)$ around $x=t$ coincides with the coefficient of the leading term of the expansion of $G(x,t)$ around $x=t$, i.e., $b_{e_2-1}$. Note that:

\[H(t,t)= \dfrac{\sum_{r=0}^{pw-e_1}{{e_2-1} \choose r}  (-1)^{e_2-1-r}}{t^{e_1}}=b_{e_2-1}.\]
\noindent By Lemma \ref{type1lemma1}, $p \nmid \sum_{r=0}^{pw-e_1}{{e_2-1} \choose r} (-1)^{e_2-1-r}$. So $b_{e_2-1} \ne 0$.

\end{proof}

The following proposition will prove part $(a)$ of Theorem \ref{deformationtheorem}.

\begin{proposition}
\label{proptype1}

Suppose $\lfloor \frac{e-1}{p} \rfloor > \lfloor \frac{e_1-1}{p} \rfloor + \lfloor \frac{e_2-1}{p} \rfloor$ . Suppose moreover that $\psi_0$ is an Artin-Schreier cover over $k$, branched at a point $b$ with ramification jump $e_1+e_2-1$. Then there exists an Artin-Schreier cover $\psi_S$ over $S=\spec k[[t]]$ whose special fiber is isomorphic to $\psi_0$, whose generic fiber is branched at two points that specialize to b and which have ramification jumps $e_1-1$ and $e_2-1$, and which is unramified everywhere else.
\end{proposition}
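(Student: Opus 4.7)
My strategy is to produce an explicit local deformation of the germ of $\psi_0$ at $b$ using the rational function $G(x,t)$ constructed in Proposition \ref{type1lemma2}, and then apply formal patching, exactly as in the proof of Proposition \ref{propreduce}, to obtain the required global cover $\psi_S$.

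Fix a local coordinate $x$ on $\mathbb{P}^1_k$ with $x(b) = 0$. By Proposition \ref{localAS} the germ of $\psi_0$ at $b$ is determined by the ramification jump $e - 1$, so after rescaling $x$ I may assume this germ is presented by $y^p - y = c_0/x^{e-1}$, where $c_0 := \binom{e_2 - 1}{pw - e_1}(-1)^{e - 1 - pw}$ is the leading value of $G(x,t)$ at $t = 0$, nonzero in $k$ by Lemma \ref{type1lemma1}. Define a local deformation $\Phi$ on the formal neighborhood of $\{b\} \times \spec k[[t]]$ in $\mathbb{P}^1_S$ by the Artin-Schreier equation $y^p - y = G(x,t)$. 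Since $G(x,0) = c_0/x^{e-1}$, the special fiber of $\Phi$ is precisely the germ of $\psi_0$, and formal patching (\cite[Theorem 3.3.4]{MR1767273}) with the trivial deformation of $\psi_0$ on the complement of $b$ yields the global cover $\psi_S$.

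The substantive work is verifying the ramification data on the generic fiber of $\Phi$. At $x = t$ the partial fraction decomposition of $G$ has leading term $b_{e_2 - 1}/(x - t)^{e_2 - 1}$ with $b_{e_2 - 1} \ne 0$ by Proposition \ref{type1lemma2}; since $p \nmid (e_2 - 1)$ the presentation is already in minimal form there, so the ramification jump is $e_2 - 1$. At $x = 0$ the leading pole $a_{pw}/x^{pw}$ has order divisible by $p$, so over $\overline{k((t))}$ I perform the Artin-Schreier substitution $y \mapsto y - c_1/x^w$, where $c_1$ is the unique $p$-th root of $a_{pw}$ (unique because $\overline{k((t))}$ is perfect of characteristic $p$). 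This kills the $1/x^{pw}$ term and adds $c_1/x^w$. Since $w = \lceil e_1/p \rceil \le e_1 - 1$ with equality exactly when $e_1 = 2$, for $e_1 \ge 3$ the surviving leading pole is $a_{e_1 - 1}/x^{e_1 - 1}$ (nonzero by Proposition \ref{type1lemma2}, of order prime to $p$), giving the desired jump $e_1 - 1$.

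The one delicate case, which I expect to be the main obstacle, is $e_1 = 2$: here $w = 1$ and the normalization produces the combined term $(a_1 + c_1)/x$, so I must exclude accidental cancellation. Using the explicit formulas from the proof of Proposition \ref{type1lemma2}, one has $v_t(a_1) = -e_2$ and $v_t(c_1) = (p - 1 - e_2)/p$; these valuations coincide only if $e_2 = -1$, which is absurd. Hence $a_1 + c_1 \ne 0$ and the jump at $x = 0$ is $1 = e_1 - 1$, completing the verification.
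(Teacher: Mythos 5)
Your proof is correct and follows essentially the same route as the paper's: deform the germ at $b$ via the Artin--Schreier equation built from the $G(x,t)$ of Proposition \ref{type1lemma2} (the paper uses the scalar multiple $F=G/c_0$ normalized so that $F(x,0)=1/x^{e-1}$), read off the jump $e_2-1$ at $x=t$ from $b_{e_2-1}\neq 0$, and get the jump $e_1-1$ at $x=0$ by absorbing $a_{pw}/x^{pw}$ as a $p$-th power after the inseparable extension $t_1^p=t$. Your explicit handling of the boundary case $e_1=2$, where the added term $c_1/x^{w}$ has the same pole order as $a_{e_1-1}/x^{e_1-1}$, addresses a point the paper's proof passes over silently, and your comparison of $t$-adic valuations correctly rules out cancellation there.
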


\begin{proof}

We use notation from Lemma \ref{type1lemma1}. 

By Proposition \ref{localAS} and \ref{propreduce}, it suffices to construct a deformation for the germ of $\psi_0$ at $b$.

Let us study the Artin-Schreier cover $\Psi$ over $S=\spec k[[t]]$ given by the normalization of $\mathbb{P}^1_S$ over the extension of its fraction field that is defined by the following affine equation:

\begin{equation}
\label{type1affine}    
y^p-y =\dfrac{ \sum_{r=0}^{pw-e_1} {{e_2-1} \choose r}x^r (-1)^{e_2-1-r} t^{pw-e_1-r}}{{{e_2-1} \choose {pw-e_1}}(-1)^{e_1+e_2-1-pw}x^{pw}(x-t)^{e_2-1}}:=F(x,t) 
\end{equation}

\noindent where the right-hand-side of (\ref{type1affine}) is just a scalar multiple of (\ref{type1edge}). We will show that $\Psi$ is locally a deformation of $\psi_0$ that satisfies the conditions we seek.


On the special fiber of $\Psi$, when $t=0$, we have:

\[F(x,0)=\dfrac{{{e_2-1} \choose {pw-e_1}}(-1)^{e_1+e_2-1-pw}}{{{e_2-1} \choose {pw-e_1}}(-1)^{e_1+e_2-1-pw}x^{e-1}}=\dfrac{1}{x^{e-1}}.\]

\noindent Thus, the normalization of the special fiber $\Psi_s$ of $\Psi$ has ramification jump $e-1$ at its branch point. Hence, $\Psi_s$ is birationally equivalent to $\psi_0$.

On the generic fiber, when $t \ne 0$, the cover $\Psi_{\eta}$ is branched above $x=0$ and $x=t$. The order of the pole of $F(x,t)$ at $x=t$ is $e_2-1$ by the construction from Proposition \ref{type1lemma2}. Since, $e_2-1$ is prime to $p$ by hypothesis, the ramification jump above $x=t$ is $e_2-1$. 

The expansion of $F(x,t)$ around $x=0$ is of the form:

\[ \dfrac{a_{pw}}{x^{pw}}+\sum_{i=1}^{e_1-1} \dfrac{a_i}{x^i} +  \textrm{terms with non negative $x$-degree,} \]

\noindent Where $a_{pw}=\dfrac{1}{{{e_2-1} \choose {pw-e_1}}(-1)^{e_1+e_2-1-pw}t^{e_1+e_2-1-pw}}$. After a finite inseparable extension of $k((t))$ with equation $t_1 ^p =t$, the leading term of $F(x,t)$ is a $p$-th power. The second term is $a_{e_1-1}={{e_2-1} \choose {pw-e_1+1}} (-1)^{e_1-e_2+1-pw} t^{-e_2}$, which is non-zero and thus becomes the leading term of the affine equation in standard form for $\Psi_{\eta}$. Thus, the ramification jump above $x=0$ is $e_1-1$.

Hence, the cover $\Psi_{\eta}$ is branched at two points that specialize to $0$ which have ramification jumps $e_1-1$ and $e_2-1$. By Lemma \ref{lemmagenus} and Lemma IV.2.3 of \cite{MR1011987}, $\Psi$ is a deformation of $\psi_0$ over $S$.

\end{proof}

\begin{remark}
Suppose $p \mid e_1 e_2$. Without loss of generality, we may assume $p \mid e_1$. Then, we can use $pw=e_1$ and equation (\ref{type1affine}) becomes:

\begin{equation}
\label{type1PriesZhu}
y^p-y=\dfrac{1}{x^{e_1}(x-t)^{e_2-1}}.    
\end{equation}

\noindent Which is precisely equation (\ref{RPdeform}) after a change of variables.

\end{remark}

\subsection{Edge of Type $\{e\} \rightarrow \{e_1,e_2,e_3\}$}

From the discussion in the beginning of section \ref{partition}, we know that there is an edge of type $\{e\} \rightarrow \{e_1,e_2,e_3\}$ in $G_d$ if and only if $e_i \equiv \frac{p+1}{2} \pmod p$. Let $pw=e_1+\frac{p-1}{2}$ be the smallest multiple of $p$ greater than or equal to $e_1$.

The proof of the below proposition is easy, and we will leave it for the reader.

\begin{lemma}
\label{funlemma}
Suppose $i<p$. Then ${a \choose i} \equiv {{\overline{a}} \choose i} \pmod p$, for $0 \le \overline{a} <p$, $a \equiv \overline{a} \pmod p$.

\end{lemma}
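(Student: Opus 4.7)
The plan is to expand both binomial coefficients as products of $i$ consecutive integers divided by $i!$, and reduce modulo $p$ using the hypothesis $a \equiv \overline{a} \pmod p$. The crucial observation is that since $i < p$, the factor $i!$ is coprime to $p$ and hence invertible modulo $p$, which lets us transfer the congruence across the division.

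Concretely, I would write
\[
{a \choose i} = \frac{a(a-1)(a-2)\cdots(a-i+1)}{i!}, \qquad {\overline{a} \choose i} = \frac{\overline{a}(\overline{a}-1)(\overline{a}-2)\cdots(\overline{a}-i+1)}{i!}.
\]
For each $0 \le k \le i-1$, the hypothesis $a \equiv \overline{a} \pmod p$ gives $a - k \equiv \overline{a} - k \pmod p$. Multiplying these $i$ congruences yields
\[
a(a-1)\cdots(a-i+1) \;\equiv\; \overline{a}(\overline{a}-1)\cdots(\overline{a}-i+1) \pmod p.
\]
Since $i < p$, none of the factors $1, 2, \dots, i$ of $i!$ is divisible by $p$, so $i!$ is a unit in $\mathbb{Z}/p\mathbb{Z}$. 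Multiplying both sides of the displayed congruence by the inverse of $i! \bmod p$ produces the claimed congruence between $\binom{a}{i}$ and $\binom{\overline{a}}{i}$ (noting that each binomial coefficient is already an integer, so the division is legitimate).

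There is no real obstacle to this argument; it is essentially the degree-one case of Lucas's theorem. The only minor point to keep in mind is that $\binom{\overline{a}}{i}$ is interpreted as $0$ when $i > \overline{a}$, which is consistent with the product formula since one of the factors $\overline{a} - k$ will then be zero, and the congruence continues to hold.
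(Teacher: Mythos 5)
Your proof is correct and complete: the falling-factorial expansion, the termwise congruence $a-k\equiv\overline{a}-k\pmod p$, and the invertibility of $i!$ modulo $p$ (since $i<p$) together give exactly the claimed congruence. The paper itself omits the proof, declaring it easy and leaving it to the reader, and your argument is the standard one that is clearly intended.
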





\begin{lemma}
\label{lemmakeith}

 The polynomial $\sum_{i=0}^{\frac{p-1}{2}} {{\frac{p-1}{2}}\choose {i}}^2  x^{\frac{p-1}{2}-i}$ is separable over $k[x]$ where $k$ has characteristic $p$.

\end{lemma}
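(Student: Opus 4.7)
The plan is to identify the polynomial (after reversal) with a hypergeometric polynomial and exploit the second-order ODE it satisfies modulo $p$. Let $m := (p-1)/2$ and set
$$ f(x) := \sum_{i=0}^{m} \binom{m}{i}^{2} x^{i}, $$
so the polynomial of the lemma equals $x^{m} f(1/x)$. Since $f(0)=1$, that reversal is separable if and only if $f$ itself is, so I reduce to showing that $f$ has no multiple root in $\overline{k}$.

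First I would recognise $f$ as the Gauss hypergeometric polynomial ${}_{2}F_{1}(-m,-m;\,1;\,x)$, which is immediate from $(-m)_{k}=(-1)^{k} m!/(m-k)!$. Consequently, in characteristic zero $f$ satisfies
$$ x(1-x)\,f''(x)+\bigl(1-(1-2m)x\bigr) f'(x) - m^{2} f(x) = 0. $$
This is an identity in $\mathbb{Z}[x]$, so it descends to $k[x]$; using $1-2m\equiv 2 \pmod{p}$, it becomes
$$ x(1-x)\,f'' + (1-2x)\,f' - m^{2} f = 0. \qquad (\star) $$

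Next I would rule out roots at $0$ and $1$: clearly $f(0)=1$, and by Vandermonde $f(1)=\binom{2m}{m}=\binom{p-1}{(p-1)/2}\equiv(-1)^{(p-1)/2}\pmod{p}$ via the standard congruence $\binom{p-1}{k}\equiv(-1)^{k}$. Hence any hypothetical multiple root $\alpha\in\overline{k}$ of $f$ must satisfy $\alpha(1-\alpha)\ne 0$. Differentiating $(\star)$ $n$ times by Leibniz yields
$$ x(1-x)\,f^{(n+2)} + (n+1)(1-2x)\,f^{(n+1)} - \bigl(n(n+1)+m^{2}\bigr) f^{(n)} = 0, $$
and, starting from $f(\alpha)=f'(\alpha)=0$, strong induction gives $f^{(n)}(\alpha)=0$ for every $n\ge 0$. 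Because $\deg f = m < p$, all factorials $n!$ with $n\le m$ are invertible in $k$, so the truncated Taylor expansion of $f$ at $\alpha$ forces $f\equiv 0$, contradicting $f(0)=1$.

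The only mild obstacle is to ensure the mod $p$ reduction of the hypergeometric ODE is still second order, i.e.\ that $m^{2}\not\equiv 0 \pmod{p}$, which holds automatically since $0<m<p$; in fact $4m^{2}\equiv 1$, so the zeroth-order coefficient of $(\star)$ is a unit. A more conceptual route would be to invoke Igusa's classical theorem that the Hasse polynomial of the Legendre family of elliptic curves has only simple roots, of which $f$ is (up to normalization) the supersingular polynomial; the argument sketched above is essentially a hands-on unpacking of that fact.
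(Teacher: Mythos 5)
Your argument is correct, and it is essentially the paper's approach: the paper proves this lemma by citing Theorem V.4.1 of Silverman (Igusa's theorem that the Hasse/supersingular polynomial of the Legendre family has distinct roots), and your hypergeometric-ODE argument is precisely the standard proof of that cited result, as you yourself note in your closing remark.
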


\begin{proof}
Theorem 4.1 in chapter V of \cite{MR817210}.
\end{proof}

\begin{proposition}
\label{proptype2}

Suppose $\psi_0$ is an Artin-Schreier cover over $k$, branched at a point $b$ with ramification jump $e_1+e_2+e_3-1$. Then there exists an Artin-Schreier cover $\Psi$ over $S=\spec k[[t]]$ whose special fiber is isomorphic to $\psi_0$, whose generic fiber is branched at three points that specialize to b and which have ramification jumps $e_1-1$, $e_2-1$ and $e_3-1$, where $e_i \equiv \frac{p+1}{2} \pmod p$, and whose ramification divisor is otherwise constant.

\end{proposition}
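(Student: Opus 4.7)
The plan is, following the template of Proposition \ref{proptype1}: by Propositions \ref{localAS} and \ref{propreduce} it suffices to construct a local deformation of the germ of $\psi_0$ at $b$, which I place at $x=0$. I would search for a rational function
\[ G(x,t) = \sum_{i=1}^{e_1-1} \dfrac{a_i(t)}{x^i} + \dfrac{a_{pw}(t)}{x^{pw}} + \sum_{j=1}^{e_2-1} \dfrac{b_j(t)}{(x-t)^j} + \sum_{k=1}^{e_3-1} \dfrac{c_k(t)}{(x-\alpha t)^k} \]
for a well-chosen $\alpha \in k\setminus\{0,1\}$, which reduces at $t=0$ to a single pole $c/x^{e-1}$ at the origin, and whose coefficients $a_{pw}$, $a_{e_1-1}$, $b_{e_2-1}$, $c_{e_3-1}$ are all non-zero. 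Then the cover $\Psi$ over $S$ defined by $y^p-y = F(x,t)$, where $F$ is a $k^{\times}$-scalar multiple of $G$ normalizing $F(x,0)=1/x^{e-1}$, will yield the desired deformation.

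Clearing denominators in $G$ and writing $G=N/D$ with $D=x^{pw}(x-t)^{e_2-1}(x-\alpha t)^{e_3-1}$, I impose that every coefficient of $N(x,t)$ of $x$-degree strictly greater than $pw-e_1-1$ vanishes, while the coefficient of $x^{pw-e_1-1}$ is a nonzero constant in $t$. After fixing $a_{pw}$, this becomes a block-triangular linear system in the remaining $a_i,b_j,c_k$, directly generalizing Proposition \ref{type1lemma2} and hence uniquely solvable. The delicate step is checking the non-vanishing of $b_{e_2-1}$ and $c_{e_3-1}$. Using Lemma \ref{funlemma} to replace each ${e_i-1 \choose l}\bmod p$ by ${(p-1)/2 \choose l}$ (since $e_i-1 \equiv (p-1)/2\pmod p$), I expect these leading coefficients to be, up to nonzero scalars, the values at $x=t$ and $x=\alpha t$ of a one-variable polynomial of degree $(p-1)/2$ whose coefficients involve ${(p-1)/2 \choose l}^2$---namely, up to rescaling, the polynomial of Lemma \ref{lemmakeith}. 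That lemma asserts its separability, so it has exactly $(p-1)/2$ distinct roots, and since $k$ is infinite I may choose $\alpha$ outside this root set.

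Once $G$ is in hand, the cover $\Psi$ has special fiber birationally equivalent to $\psi_0$ (pole order $e-1$ at $x=0$). On the generic fiber, the poles at $x=t$ and $x=\alpha t$ have orders $e_2-1$ and $e_3-1$ respectively, both prime to $p$, contributing the claimed ramification jumps. At $x=0$, the leading term $a_{pw}/x^{pw}$ becomes a $p$-th power after the inseparable base change $t_1^p=t$ and is absorbed by $y\mapsto y+h$; the next surviving term $a_{e_1-1}/x^{e_1-1}$ has exponent $e_1-1 \equiv (p-1)/2 \not\equiv 0\pmod p$, giving jump $e_1-1$. Lemma IV.2.3 of \cite{MR1011987} then confirms that $\Psi$ is a genuine deformation with the required invariants. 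The main obstacle is clearly the non-vanishing step: the symmetric congruence $e_i\equiv(p+1)/2\pmod p$ prevents factoring this three-point move into two successive two-point moves---by Corollary \ref{openquestion1}(a) each intermediate dimension would fail to drop---so the construction must be genuinely three-point, and its validity hinges squarely on the separability result of Lemma \ref{lemmakeith}.
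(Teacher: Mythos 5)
Your overall architecture (partial fractions with poles at $0$, $t$, and a third point $\alpha t$; clearing denominators; a linear system in the coefficients; the inseparable base change $t_1^p=t$ to absorb the $p$-th-power leading term) matches the paper's, but the pivotal step is wrong in a way that makes the construction fail. Count the constraints: with $D=x^{pw}(x-t)^{e_2-1}(x-\alpha t)^{e_3-1}$ and $pw=e_1+\frac{p-1}{2}$, requiring $N(x,0)=c\,x^{pw-e_1-1}$ imposes the vanishing of the coefficients of $x^m$ for $pw-e_1\le m\le pw+e_2+e_3-3$, which is $e_1+e_2+e_3-2$ conditions on only $e_1+e_2+e_3-3$ unknowns $a_i,b_j,c_k$; the system is not a square block-triangular one ``directly generalizing'' Proposition \ref{type1lemma2}. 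Worse, the extra condition --- the vanishing of the coefficient of $x^{pw-e_1}=x^{(p-1)/2}$ --- involves \emph{none} of the unknowns: every term $a_ix^{pw-i}(\cdots)$, $b_jx^{pw}(\cdots)$, $c_kx^{pw}(\cdots)$ has $x$-degree at least $pw-e_1+1$, so only $a_{pw}(x-t)^{e_2-1}(x-\alpha t)^{e_3-1}$ contributes in that degree. Its coefficient there is $a_{pw}(-t)^{e_2+e_3-2-\frac{p-1}{2}}\sum_{i+j=\frac{p-1}{2}}\binom{e_2-1}{i}\binom{e_3-1}{j}\alpha^{e_3-1-j}$, which, with $a_{pw}$ normalized as the special fibre forces, is a nonzero multiple of $t^{-1}$ unless the sum vanishes; and by Lemma \ref{funlemma} that sum is, mod $p$ and up to a power of $\alpha$, exactly the polynomial of Lemma \ref{lemmakeith} evaluated at $\alpha$. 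So $\alpha$ must be a \emph{root} of that polynomial --- the opposite of your prescription to choose $\alpha$ outside its root set. With your choice, the coefficient of $x^{(p-1)/2}$ cannot be killed by any choice of the remaining coefficients, and the special fibre either fails to be defined over $k[[t]]$ or acquires ramification jump $e-2$ instead of $e-1$.

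The paper resolves this by promoting the third branch point to an unknown: it sits at $x=tr$ with $r\in\overline{k((t))}$ defined by $\sum_{i+j=\frac{p-1}{2}}\binom{(p-1)/2}{i}\binom{(p-1)/2}{j}r^{e_3-1-j}=t^2$, so that $r_0=r|_{t=0}$ is a root of the squared-binomial polynomial and the offending coefficient becomes a positive power of $t$. Separability (Lemma \ref{lemmakeith}) is then invoked not to exhibit $(p-1)/2$ bad values to avoid, but to guarantee that among the $(p-1)/2$ \emph{distinct} roots there is one at which the lower-degree polynomial $\sum_{i+j=\frac{p-3}{2}}\binom{(p-1)/2}{i}\binom{(p-1)/2}{j}r_0^{e_3-1-j}$ --- the would-be leading coefficient of $N(x,0)$ --- does not vanish. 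Your instinct that Lemma \ref{lemmakeith} is the crux and that the move is irreducibly three-point is correct, but the lemma enters through the consistency of the system and the location of the third branch point, not through the non-vanishing of $b_{e_2-1}$ and $c_{e_3-1}$.
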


\begin{proof}

Let us study the Artin-Schreier cover $\Psi$ over $S=\spec k[[t]]$ given by the normalization of $\mathbb{P}^1_S$ over the extension of its fraction field that is defined by the following affine equation:

\[ y^p-y =\dfrac{\sum_{n=0}^{\frac{p-3}{2}}( \sum_{i+j=n} {{\frac{p-1}{2}}\choose i}{{\frac{p-1}{2}} \choose j} r^{e_3-1-j}(-1)^{\frac{p-3}{2}-n})t^{\frac{p-3}{2}-n}x^n-tx^{\frac{p-1}{2}}}{(\sum_{i+j=\frac{p-3}{2}} {{\frac{p-1}{2}}\choose i}{{\frac{p-1}{2}} \choose j} r_0^{e_3-1-j})x^{pw}(x-t)^{e_2-1}(x-tr)^{e_3-1}}=:F(x,t), \]


\noindent where $r$ is a solution of:

\begin{equation}
\label{definer}
    \sum_{i+j=\frac{p-1}{2}} {{\frac{p-1}{2}}\choose {i}} {{\frac{p-1}{2}}\choose {j}} r^{e_3-1-j} =t^2.
\end{equation}

\noindent over a fixed algebraic closure of $k((t))$ such that its value when $t=0$, called $r_0$, satisfies $\sum_{i+j=\frac{p-3}{2}} {{\frac{p-1}{2}}\choose {i}} {{\frac{p-1}{2}}\choose {j}} r_0^{e_3-1-j} \ne 0$. We will show that $\Psi$ is locally a deformation of $\psi_0$ that satisfies the conditions we seek.



Since the left-hand-side of (\ref{definer}) is congruent to $\sum_{i=0}^{\frac{p-1}{2}} {{\frac{p-1}{2}}\choose {i}}^2  r^{e_3-1-i}$ modulo $p$ by Lemma \ref{funlemma}, showing that there exists $r \in \overline{k((t))}$ which satisfies the above conditions is equivalent to showing that there are some roots of the polynomial $\sum_{i=0}^{\frac{p-1}{2}} {{\frac{p-1}{2}}\choose {i}}^2  r^{\frac{p-1}{2}-i}$ that are not roots of the polynomial $\sum_{i+j=\frac{p-3}{2}} {{\frac{p-1}{2}}\choose {i}} {{\frac{p-1}{2}}\choose {j}} r^{e_3-1-j}=r^{e_3-1-\frac{p-3}{2}}(\sum_{i+j=\frac{p-3}{2}} {{\frac{p-1}{2}}\choose {i}} {{\frac{p-1}{2}}\choose {j}} r^{\frac{p-3}{2}-j})$. Since the former polynomial has degree higher than $\sum_{i+j=\frac{p-3}{2}} {{\frac{p-1}{2}}\choose {i}} {{\frac{p-1}{2}}\choose {j}} r^{\frac{p-3}{2}-j}$, it suffices to show the former one is separable, which follows from Lemma \ref{lemmakeith}.

On the special fiber, when $t=0$, then:

\[F(x,0)=\dfrac{x^{\frac{p-3}{2}}  }{x^{pw}(x-0)^{e_2-1}(x-0)^{e_3-1}}= \dfrac{1}{x^{e-1}}.\]

\noindent Thus, the normalization of the special fiber $\Psi_s$ has ramification jump $e-1$ at its only branch point. Hence, the cover $\Psi_s$ is birationally equivalent to $\psi_0$.

On the generic fiber, when $t \ne 0$, then $\Psi_{\eta}$ is branched above $x=0$, $x=t$ and $x=tr$. Consider:

\[H(x,t)=F(x,t)(x-t)^{e_2-1}=\dfrac{\sum_{n=0}^{\frac{p-3}{2}} \sum_{i+j=n} {{\frac{p-1}{2}}\choose i}{{\frac{p-1}{2}} \choose j} r^{e_3-1-j}(-1)^{\frac{p-3}{2}-n}t^{\frac{p-3}{2}-n}x^n-tx^{\frac{p-1}{2}}}{(\sum_{i+j=\frac{p-3}{2}} {{\frac{p-1}{2}}\choose i}{{\frac{p-1}{2}} \choose j} r_0^{e_3-1-j})x^{pw}(x-tr)^{e_3-1}}. \]

\noindent Then:

\[H(t,t)=\dfrac{\sum_{n=0}^{\frac{p-3}{2}} \sum_{i+j=n} {{\frac{p-1}{2}}\choose i}{{\frac{p-1}{2}} \choose j} r^{e_3-1-j}(-1)^{\frac{p-3}{2}-n}t^{\frac{p-3}{2}}-t^{\frac{p+1}{2}}}{(\sum_{i+j=\frac{p-3}{2}} {{\frac{p-1}{2}}\choose i}{{\frac{p-1}{2}} \choose j} r_0^{e_3-1-j})t^{pw}(t-tr)^{e_3-1}} \ne 0. \]

\noindent Because $H(x,t)$ does not have a zero at $x=t$, the order of the pole of $F(x,t)$ at $x=t$ is $e_2-1$. Moreover, $e_2-1$ prime to $p$ by hypothesis. Thus, the ramification jump above $x=t$ is $e_2-1$.

Similarly, consider:

\[K(x,t)=F(x,t)(x-tr)^{e_3-1}=\dfrac{\sum_{n=0}^{\frac{p-3}{2}} \sum_{i+j=n} {{\frac{p-1}{2}}\choose i}{{\frac{p-1}{2}} \choose j} r^{e_3-1-j}(-1)^{\frac{p-3}{2}-n}t^{\frac{p-3}{2}-n}x^n-tx^{\frac{p-1}{2}}}{(\sum_{i+j=\frac{p-3}{2}} {{\frac{p-1}{2}}\choose i}{{\frac{p-1}{2}} \choose j} r_0^{e_3-1-j})x^{pw}(x-t)^{e_2-1}}.\]

\noindent Then:

\[K(tr,t)=\dfrac{\sum_{n=0}^{\frac{p-3}{2}} \sum_{i+j=n} {{\frac{p-1}{2}}\choose i}{{\frac{p-1}{2}} \choose j} r^{e_3-1-j}(-1)^{\frac{p-3}{2}-n}t^{\frac{p-3}{2}}r^n-t^{\frac{p+1}{2}}r^{\frac{p-1}{2}}}{(\sum_{i+j=\frac{p-3}{2}} {{\frac{p-1}{2}}\choose i}{{\frac{p-1}{2}} \choose j} r_0^{e_3-1-j})(tr)^{pw}(tr-t)^{e_2-1}} \ne 0.\]

\noindent By the same argument as above, the ramification jump above $x=tr$ is $e_3-1$.

Finally, let us consider the expansion of $F(x,t)$ around $x=0$:

\begin{equation}
\label{type2around0}
    \sum_{i=1}^{pw} \dfrac{a_i}{x^i} +  \textrm{terms with non negative $x$-degree,}
\end{equation}


\noindent where $a_{pw}=\frac{1}{(\sum_{i+j=\frac{p-3}{2}} {{\frac{p-1}{2}}\choose i}{{\frac{p-1}{2}} \choose j} r_0^{e_3-1-j})(-t)^{e_2+e_3-2-\frac{p-3}{2}}}$. In order for $\frac{a_{pw}}{x^{pw}}$ to have the same denominator as $F(x,t)$, we multiply the top and the bottom of the fraction by ${(x-t)^{e_2-1}(x-tr)^{e_3-1}}$. It would have the form:

\begin{equation}
\label{newapw}
    \frac{(x-t)^{e_2-1}(x-tr)^{e_3-1}(-t)^{-(e_2+e_3-2-\frac{p-3}{2})}}{(\sum_{i+j=\frac{p-3}{2}} {{\frac{p-1}{2}}\choose i}{{\frac{p-1}{2}} \choose j} r_0^{e_3-1-j})(x-t)^{e_2-1}(x-tr)^{e_3-1}}.
\end{equation}

\noindent One can check that the coefficients of the terms of the numerator of the above fraction with $x$-degree $0,1, \ldots, \frac{p-1}{2}$ are precisely the corresponding ones of the numerator of $F(x,t)$. Thus, we have $a_{pw-1}=\ldots=a_{pw-\frac{p-1}{2}}=0$. Moreover, we must have $a_{pw-\frac{p+1}{2}}=a_{e_1-1}$ different from $0$, or else we can not cancel out the term with $x$-degree $\frac{p+1}{2}$ of the numerator of (\ref{newapw}). After a finite inseparable extension of $k((t))$ with equation $t_1 ^p =t$, the leading term $\frac{a_{pw}}{x^{pw}}$ of the expansion of $F(x,t)$ around $x=0$ is a $p$-th power. The second term is $\frac{a_{e_1-1}}{x^{e_1-1}}$, which is non-zero and thus becomes the leading term of the affine equation in standard form for $\Psi_{\eta}$. Thus, the ramification jump above $x=0$ is $e_1-1$.

Hence, the cover $\Psi_{\eta}$ is branched at three points that specialize to $0$ which have ramification jumps $e_1-1$ ,$e_2-1$ and $e_3-1$. By Lemma \ref{lemmagenus} and Lemma IV.2.3 of \cite{MR1011987}, $\Psi$ is a deformation of $\psi_0$ over $S$.

\end{proof}

Theorem \ref{deformationtheorem} $(b)$ is proved.

\subsection{Edge of Type $\{e\} \rightarrow \{p-1,e_1,e_2\}$} 
 
\begin{proposition}
\label{proptype3}
Suppose $\psi_0$ is an Artin-Schreier cover over $k$, branched at a point $b$ with ramification jump $e-1=p+e_1+e_2-2$ where $e_1, e_2, e_1+e_2-1 \not\equiv 1 \pmod p$. Then there exists an Artin-Schreier cover $\Psi$ over $S=\spec k[[t]]$ whose special fiber is isomorphic to $\psi_0$, whose generic fiber is branched at three points which specialize to b and which have ramification jumps $p-2$, $e_1-1$ and $e_2-1$, and whose ramification divisor is otherwise constant.

\end{proposition}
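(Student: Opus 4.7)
My approach parallels the proofs of Propositions \ref{proptype1} and \ref{proptype2}: by Propositions \ref{localAS} and \ref{propreduce}, it suffices to construct an explicit deformation of the germ of $\psi_0$ at $b$. I expect the construction to be notably simpler than in the type-2 case, because the third branch point on the generic fiber can be placed at $x = tr$ for a rational constant $r \in k^\times$, rather than requiring a transcendental function of $t$ defined by an auxiliary algebraic equation.

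Concretely, I propose to study the cover $\Psi$ over $S = \spec k[[t]]$ defined by
\[
y^p - y = F(x,t) := \frac{1}{x^{p}(x-t)^{e_1-1}(x-tr)^{e_2-1}}, \qquad r := -\frac{e_2-1}{e_1-1} \in k^\times.
\]
Note $r$ is well-defined and nonzero because $e_1, e_2 \not\equiv 1 \pmod p$, and $r \neq 1$ because $e_1+e_2-1 \not\equiv 1 \pmod p$, so that $0, t, tr$ are pairwise distinct on the generic fiber. The verification then splits naturally into checking the behavior at each branch point. On the special fiber $F(x,0) = 1/x^{p+e_1+e_2-2} = 1/x^{e-1}$, which is in minimal form (since $e-1 \not\equiv 0 \pmod p$) and gives ramification jump $e-1$, so the special fiber is birational to $\psi_0$. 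At $x = t$ on the generic fiber, the auxiliary function $H(x,t) := F(x,t)(x-t)^{e_1-1} = 1/[x^p(x-tr)^{e_2-1}]$ satisfies $H(t,t) = 1/[t^{p+e_2-1}(1-r)^{e_2-1}] \neq 0$, giving ramification jump $e_1-1$; the symmetric calculation at $x = tr$ gives jump $e_2-1$.

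The core of the argument is the branch point at $x = 0$. Writing $g(x) = (x-t)^{e_1-1}(x-tr)^{e_2-1}$ and expanding $1/g(x) = \alpha_0 + \alpha_1 x + \alpha_2 x^2 + \cdots$ around $x = 0$ produces $F(x,t) = \alpha_0/x^p + \alpha_1/x^{p-1} + \alpha_2/x^{p-2} + \cdots$. A direct calculation using $(1/g)'(0) = -g'(0)/g(0)^2$ shows $\alpha_1$ is a unit times $(e_1-1)r + (e_2-1)$, which vanishes exactly by our choice of $r$. Differentiating once more --- and using the simplification $(g'/g)(0) = 0$ that is available because of the previous step --- gives $\alpha_2$ as a nonzero unit times $(e_1-1)(e_1+e_2-2)/(e_2-1)$, which is itself a unit modulo $p$ precisely because of the three hypotheses $e_1, e_2, e_1+e_2-1 \not\equiv 1 \pmod p$ (together with $p > 2$). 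After the inseparable base change $t_1^p = t$ and the substitution $y \mapsto y + c/x$ with $c^p = \alpha_0$, the $\alpha_0/x^p$ term is absorbed; the $1/x^{p-1}$ coefficient remains zero, so the new leading pole is $\alpha_2/x^{p-2}$ of order $p-2$, which is coprime to $p$ and hence in minimal form. The main obstacle will be the bookkeeping for this last step: extracting the equation $(e_1-1)r + (e_2-1) = 0$ as the precise content of $\alpha_1 = 0$, and verifying that the factor $(e_1-1)(e_1+e_2-2)/(e_2-1)$ appearing in $\alpha_2$ matches the three congruence hypotheses exactly. Once these are verified, Lemma \ref{lemmagenus} confirms that $\Psi$ is the desired deformation.
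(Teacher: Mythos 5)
Your proposal is correct and follows essentially the same method as the paper: the paper's proof likewise writes down an explicit cover $y^p-y=N(x,t)/\bigl(Cx^p(x-t)^{e_1-1}(x-tr)^{e_2-1}\bigr)$ and checks the pole orders at $0$, $t$, $tr$, the key point in both cases being that the choice of $r$ makes the quantity $(e_1-1)r+(e_2-1)$ kill the coefficient of $x^{-(p-1)}$ in the expansion at $x=0$, while the coefficient of $x^{-(p-2)}$ comes out as a unit multiple of $(e_1-1)(e_1+e_2-2)/(e_2-1)$, nonzero exactly by the three congruence hypotheses. The only difference is in the specific equation: the paper takes numerator $r-tx$ with the $t$-dependent $r=(t^2-e_2+1)/(e_1-1)$, arranging $(e_1-1)r+(e_2-1)=t^2$, whereas you take numerator $1$ and the constant $r=-(e_2-1)/(e_1-1)$ (precisely the $t=0$ value of the paper's $r$), which checks out and is, if anything, a slightly cleaner variant.
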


\begin{proof}

We define an Artin-Schreier cover $\Psi$ over $S=\spec k[[t]]$ given by the normalization of $\mathbb{P}^1_S$ over the extension of its fraction field that is defined by the following affine equation:

\[y^p-y=\dfrac{r-tx}{\frac{1-e_2}{e_1-1}x^p(x-t)^{e_1-1}(x-tr)^{e_2-1}}=:F(x,t)\]

\noindent Where $r=\frac{t^2-e_2+1}{e_1-1}$. Going through a process similar to one in the proof of Proposition \ref{proptype2}, one can show that $\Psi$ is a deformation satisfies the proposition. In particular, its special fiber is an Artin-Schreier cover over $k$, branched at $0$ with ramification jump $e-1$. Its generic fiber is an Artin-Schreier cover over $k((t))$ branched at $0, t$, and $tr$, which all specialize to $0$ and which have ramification jumps $p-2, e_1-1$, and $e_2-1$ respectively.

\end{proof}

Part $(c)$ of Theorem \ref{deformationtheorem} is proved.

\subsection{Edge of Type $\{(n+1)(p-n+1)\} \rightarrow \{n+1, n+1, \ldots,n+1\}$}
\label{deformation4}
 
 \begin{proposition}
\label{proptype4}

Suppose $\psi_0$ is an Artin-Schreier cover over $k$, branched at a point $b$ with ramification jump $(n+1)p-n^2$ where $p \nmid n[(n+1)p-n^2]$,  then there exists an Artin-Schreier cover $\Psi$ over $S=\spec k[[t]]$ whose special fiber is isomorphic to $\psi_0$, whose generic fiber is branched at $p-n+1$ points that specialize to b and which all have ramification jumps $n$, and whose ramification divisor is otherwise constant.

\end{proposition}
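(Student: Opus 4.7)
The plan is to mimic the explicit-construction strategy of Propositions \ref{proptype1}--\ref{proptype3}, namely to write down a rational function $F(x,t)$ whose associated Artin-Schreier cover specializes to the germ of $\psi_0$ at $b$ and whose generic fiber, after reduction to minimal form, has exactly the prescribed ramification. Writing $e := (n+1)(p-n+1)$ so that $e-1 = (n+1)p - n^2$, one first observes that the hypothesis $p \nmid n[(n+1)p - n^2]$ is equivalent to $p \nmid n$, since $(n+1)p - n^2 \equiv -n^2 \pmod p$. I would define the Artin-Schreier cover $\Psi$ over $S = \spec k[[t]]$ by
\[
y^p - y \;=\; F(x,t) \;:=\; \frac{1}{x^{p}\,(x^{p-n} - t)^{n}}.
\]
The factor $x^{p-n} - t$ is Eisenstein at $(t)$, hence irreducible over $k((t))$, and separable because $p \nmid p - n$; its $p-n$ geometric roots $a_{1}, \ldots, a_{p-n} \in \overline{k((t))}$ are therefore distinct nonzero elements that all specialize to $0$ as $t \to 0$.

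For the special fiber, setting $t = 0$ collapses the factor to $x^{p-n}$ and gives $F(x,0) = 1/x^{p + n(p-n)} = 1/x^{e-1}$, so by Proposition \ref{localAS} the germ of the special fiber at $x=0$ is isomorphic to the germ of $\psi_{0}$ at $b$. On the generic fiber, at each $a_{i}$ the factor $x^{p}$ is nonvanishing while $(x^{p-n}-t)^{n}$ has a zero of order exactly $n$ with leading coefficient $(p-n)^{n} a_{i}^{n(p-n-1)}$, nonzero thanks to $p \nmid n$; since $n$ is prime to $p$, the local equation is already in minimal form there and the ramification jump equals $n$.

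The main analytic step is the reduction at $x = 0$. Expanding geometrically,
\[
F(x,t) \;=\; (-t)^{-n}\sum_{k\ge 0}\binom{n+k-1}{k}\frac{x^{(p-n)k-p}}{t^{k}},
\]
so the $k=0$ term $(-t)^{-n}/x^{p}$ is a pole of order $p$. Adjoining a $p$-th root $\alpha$ of $(-t)^{-n}$ (a finite inseparable extension of $k((t))$) and substituting $y = z + \alpha/x$ absorbs this leading $p$-th power; the $k=1$ term $n(-1)^{n}/(t^{n+1}x^{n})$ then becomes the new leading pole, and its coefficient is nonzero precisely by the hypothesis $p \nmid n$. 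For $k \ge 2$ the pole exponents $p - (p-n)k$ are strictly less than $n$, since $p-(p-n) = n$ and $p-n>0$ force $p - (p-n)k < n$ as soon as $k>1$; the correction $\alpha/x$ contributes only at order $1\le n$, and in the borderline case $n=1$ the $t$-adic valuations of $\alpha$ and the $k=1$ coefficient differ, so no cancellation occurs. Hence in minimal form the ramification jump at $x = 0$ is exactly $n$.

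Combining, the generic fiber is wildly ramified at the $p-n+1$ points $\{0, a_{1}, \ldots, a_{p-n}\}$, each with ramification jump $n$, and the ramification divisor is otherwise constant; Lemma \ref{lemmagenus} together with Lemma IV.2.3 of \cite{MR1011987} confirm that both fibers carry the common genus $(e-2)(p-1)/2$, so $\Psi$ is the desired deformation, as in the previous subsections. The principal obstacle was choosing an ansatz whose Taylor expansion at $x=0$ supplies only pole exponents congruent to $-p \pmod{p-n}$, so that the dangerous subleading terms of orders $p-1, p-2, \ldots, n+1$ all vanish automatically; the single factor $x^{p-n}-t$ arranges this via its geometric-series expansion, which is why the proof collapses to a clean one-parameter construction rather than the matrix-inversion bookkeeping needed in Proposition \ref{proptype1}.
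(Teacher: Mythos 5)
Your construction is essentially the paper's own: the paper takes $y^p-y=1/\bigl(x^p(x^{p-n}-t^{p-n})^n\bigr)$, and your ansatz $x^{p-n}-t$ differs only by the substitution $t\mapsto t^{p-n}$, the sole effect being that your $p-n$ auxiliary branch points are defined over $k((t^{1/(p-n)}))$ rather than being $k((t))$-rational (harmless for landing in the stratum, and removable by reverting to $t^{p-n}$). The geometric-series expansion at $x=0$, the use of $p\nmid n$ to keep the order-$n$ term alive after absorbing the order-$p$ pole, and the check that the $k\ge 2$ terms have pole order $p-(p-n)k<n$ all coincide with the paper's argument, and your treatment of the borderline case $n=1$ is in fact more careful than the original.
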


\begin{proof}

Let us study the Artin-Schreier cover $\Psi$ over $S=\spec k[[t]]$ given by the normalization of $\mathbb{P}^1_S$ over the extension of its fraction field that is defined by the following affine equation:

\[y^p-y=\dfrac{1}{x^p(x^{p-n}-t^{p-n})^n}=:F(x,t).\]

\noindent We will show that $\Psi$ is locally a deformation of $\psi_0$ that satisfies the conditions we seek.

On the special fiber, when $t=0$, we have:

\[F(x,0)=\dfrac{1}{x^{p+(p-n)n}}=\dfrac{1}{x^{(n+1)p-n^2}}.\]

\noindent Thus, the special fiber $\Psi_s$ has ramification jump $e-1$ at its only branch point. Hence, the cover $\Psi_s$ is birationally equivalent to $\psi_0$.

On the generic fiber, where $t \ne 0$, the derivative of $x^{p-n}-t^{p-n}$ (with respect to $x$) is $(p-n)x^{p-n-1}$, which differ from $0$ and only has $x=0$ as root. Thus $x^{p-n}-t^{p-n}$ is relatively prime to its derivative i.e. $x^{p-n}-t^{p-n}$ has $p-n$ distinct roots. Whence $F(x,t)$ on generic fiber has a pole at $x=0$, and $p-n$ distinct poles that are roots of  $x^{p-n}-t^{p-n}$. By a similar argument as before, $F(x,t)$ has poles of order exactly $n$ at those later points.

Let us consider the Taylor expansion of $F(x,t)$ around $x=0$:

\[\dfrac{(-1)^n}{t^{n(p-n)}x^p}(1+n(\dfrac{x}{t})^{p-n}+\cdots) \]

\noindent After a finite inseparable extension of $k((t))$ with equation $t_1^p=t$, the leading term of expansion of $F(x,t)$ around $0$ is a $p$-th power. The second term is $\frac{n (-1)^n}{t^{p(n+1)-n^2-n}x^n}$ which is nonzero and thus becomes the leading term of the affine equation in standard form. Thus, the ramification jump above $x=0$ is $n$.

Thus, the cover $\Psi_{\eta}$ is branched at $n+1$ points that specialize to $0$ which have ramification jumps all equal to $n$. By Lemma \ref{lemmagenus} and Lemma IV.2.3 of \cite{MR1011987}, $\Psi$ is a deformation of $\psi_0$ over $S$.

\end{proof}
 
 That proves part $(d)$ of Theorem \ref{deformationtheorem}.

 \subsection{Deformations in Characteristic $5$}
 
 In this section, the base field $k$ has characteristic $5$.
 
 \begin{proposition}
\label{deform3222}

Suppose $\psi_0$ is an Artin-Schreier cover over $k$, branched at a point $b$ with ramification jump $8$. Then there exists an Artin-Schreier cover $\psi_S$ over $S=\spec k[[t]]$ whose special fiber is isomorphic to $\psi_0$, whose generic fiber is branched at four points that specialize to b and which have ramification jumps $2$,$1$,$1$ and $1$, and whose ramification divisor is otherwise constant.

\end{proposition}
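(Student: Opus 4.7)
Following the template of Propositions~\ref{proptype1}--\ref{proptype4}, I will construct an explicit Artin-Schreier cover $\Psi$ over $S=\spec k[[t]]$ given by an affine equation $y^5-y=F(x,t)$ and verify that its special fiber reproduces $\psi_0$ while its generic fiber has branching of type $\{3,2,2,2\}$. By Propositions~\ref{localAS} and~\ref{propreduce} it suffices to work locally at a single branch point, which I normalize to $x=0$.

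My candidate is
\[ F(x,t) = \frac{1}{x^5(x^3-t^3)}. \]
Since $k$ is algebraically closed of characteristic $5$, there is a primitive cube root of unity $\zeta\in k$, and $x^3-t^3=(x-t)(x-t\zeta)(x-t\zeta^2)$. Setting $t=0$ gives $F(x,0)=1/x^8$, so $\Psi_s$ has a unique branch point at $x=0$ with ramification jump $8=e-1$ and is birationally equivalent to $\psi_0$ by Proposition~\ref{localAS}. For $t\neq 0$, $F$ has three simple poles at the distinct points $t,t\zeta,t\zeta^2$, each contributing ramification jump $1$ on the generic fiber (already in minimal form since $\gcd(1,5)=1$), plus a pole of order $5$ at $x=0$ that requires further analysis.

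The analysis of the pole at $x=0$ is the heart of the proof. A direct Taylor expansion gives
\[ F(x,t) = -\frac{1}{t^3 x^5} - \frac{1}{t^6 x^2} - \frac{x}{t^9} - \cdots, \]
so the coefficients of $1/x^4$ and $1/x^3$ vanish automatically while the coefficient of $1/x^2$ is $-1/t^6\neq 0$. After the inseparable base change $t_1^5=t$, the leading term becomes the $p$-th power $(-t_1^{-3}/x)^5$ and can be absorbed via the Artin-Schreier substitution $y\mapsto y+t_1^{-3}/x$, leaving a pole of order exactly $2$ at $x=0$ in the resulting equation in minimal form. This yields ramification jump $2=e_1-1$ at the specialization $x=0$, producing the partition $\{3,2,2,2\}$ on the generic fiber; Lemma~\ref{lemmagenus} together with Lemma IV.2.3 of \cite{MR1011987} then confirms no other branching appears. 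The only step requiring insight is the choice of denominator: taking the three simple-pole locations to be a Galois orbit of cube roots of unity scaled by $t$ makes the elementary symmetric functions $E_1$ and $E_2$ of $\{1,\zeta,\zeta^2\}$ vanish, which is precisely the combinatorial condition that forces the two middle Laurent coefficients at $x=0$ to cancel.
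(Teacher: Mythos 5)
Your construction is correct, and it takes a genuinely different --- and considerably shorter --- route than the paper's own proof. The paper places the three simple poles at $t, tr, ts$, where $r,s$ are roots of a purpose-built quadratic $X^2-(t^3-1)X+t^2-t^3+1=0$, and then needs the nonconstant numerator $-rs+tx-tx^2$ together with separate checks that $r_0s_0\neq 0$, $r_0\neq s_0$ and $r,s\neq 1$ in order to force the coefficients of $x^{-4}$ and $x^{-3}$ in the principal part at $x=0$ to vanish while keeping the $x^{-2}$ coefficient nonzero. You instead put the three simple poles at the cube roots of $t^3$, so the denominator collapses to $x^5(x^3-t^3)$, the numerator can be taken to be $1$, and the vanishing of the two middle Laurent coefficients is automatic because $1/(x^3-t^3)$ expands in powers of $x^3$; your closing remark correctly isolates the reason (the elementary symmetric functions $e_1,e_2$ of $\{1,\zeta,\zeta^2\}$ vanish). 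Your $F$ is in fact the natural generalization of the paper's own Proposition~\ref{proptype4} (which uses $1/(x^p(x^{p-n}-t^{p-n})^n)$) obtained by decoupling the two exponents, and the same computation shows more generally that $y^p-y=1/(x^p(x^c-t^c)^m)$ with $p\nmid cm$ and $2\le c\le p-1$ deforms $\{p+cm+1\}$ to $\{p-c+1\}$ together with $c$ copies of $m+1$. All the verifications go through exactly as in the paper's other local computations: $F(x,0)=x^{-8}$ gives the special fiber; the poles at $t,t\zeta,t\zeta^2$ are distinct and simple; the principal part at $x=0$ is $-t^{-3}x^{-5}-t^{-6}x^{-2}$; after the base change $t_1^5=t$ the order-$5$ term is the $p$-th power $\left(-t_1^{-3}/x\right)^5$ and is absorbed by an Artin--Schreier substitution, leaving jump $2$ at $x=0$; and $\sum e_j=9$ on both fibers, so Lemma IV.2.3 of \cite{MR1011987} applies as in the paper. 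The only thing your write-up omits relative to the paper's template is an explicit appeal to the paper's reduction to the germ, which you do state at the outset, so nothing is missing.
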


\begin{proof}

Let us study the Artin-Schreier cover $\Psi$ over $S=\spec k[[t]]$ given by the normalization of $\mathbb{P}^1_S$ over the extension of its fraction field that is defined by the following affine equation:

\[y^5-y=\frac{-rs+tx-tx^2}{-r_0s_0x^5(x-t)(x-tr)(x-ts)}=:F(x,t).\]

\noindent Where $r$, $s$ are solutions in $\overline{k((t))}$ of the equation:

\[X^2-(t^3-1)X+t^2-t^3+1=0.\]

\noindent And $r_0$, $s_0 \in k$ are the values of $r$ and $s$, respectively, when we plug in $t=0$. We will show that $\Psi$ is locally a deformation of $\psi_0$ that satisfies the conditions we seek.

On the special fiber, when $t=0$, $r_0$, $s_0$ are solutions in $k$ of the equation $X^2+X+1=0$ which does not have $0$ as a root. Thus $r_0$, $s_0 \neq 0$. Also, $X^2+X+1$ is separable on $k(X)$. Hence, $r_0 \neq s_0$. It follows that $r \neq s$, and it is also clear that $r,s \neq 1$. Moreover, the special fiber of $\Psi$ is defined by:

\[y^5-y=F(x,0)=\dfrac{-r_0s_0}{-r_0s_0x^8}=\dfrac{1}{x^8}.\]

\noindent Thus, the normalization of the special fiber $\Psi_s$ has ramification jump $8$ at its only branch point. Hence, the cover $\Psi_s$ is birationally equivalent to $\psi_0$.

On the generic fiber, $\Psi_{\eta}$ is branched above $x=0$, $x=t$, $x=tr$ and $x=ts$. The order of the poles of $F(x,t)$ at $x=t$, $x=tr$ and $x=ts$ is $1$. 

The expansion of $F(x,t)$ around $x=0$ is

\[ \dfrac{a_5}{x^5} +\sum_{i=1}^4 \dfrac{a_i}{x^i}+ \textrm{terms with non negative $x$-degree}, \]


\noindent where $a_5=-\frac{1}{r_0s_0t^3}$. In order for $\frac{a_{5}}{x^{5}}$ to have the same denominator as $F(x,t)$, we multiply the top and the bottom of the fraction by $\frac{(x-t)(x-t\cdot r)(x-t \cdot s)}{t^3}$. It would have the form:

\begin{equation}
\label{newtype51a5}
    \frac{(x-t)(x-tr)(x-ts)t^{-3}}{-r_0s_0x^5(x-t)(x-tr)(x-ts)}.
\end{equation}

\noindent One can check that the coefficients of the terms of the numerator of the above fraction with $x$-degree $0,1,2$ are precisely the corresponding ones of the numerator of $F(x,t)$. Thus, we have $a_{4}=a_3=0$. Moreover, we must have $a_{2}$ different from $0$, or else we can not cancel out the term with $x$-degree $3$ of the numerator of (\ref{newtype51a5}). After a finite inseparable extension of $k((t))$ with equation $t_1 ^5 =t$, the leading term $\frac{a_{5}}{x^{5}}$ of the expansion of $F(x,t)$ around $x=0$ is a $5$-th power. The second term is $\frac{a_{2}}{x^{2}}$, which is non-zero and thus becomes the leading term of the affine equation in standard form for $\Psi_{\eta}$. Thus, the ramification jump above $x=0$ is $2$.

Thus, the cover $\Psi_{\eta}$ is branched at four points that specialize to $0$ which have ramification jumps $2$ ,$1$, $1$ and $1$. By Lemma \ref{lemmagenus} and Lemma IV.2.3 of \cite{MR1011987}, $\Psi$ is a deformation of $\psi_0$ over $S$.

\end{proof}

 \begin{proposition}
\label{deform3322}

Suppose $\psi_0$ is an Artin-Schreier cover over $k$, branched at a point $b$ with ramification jump $9$. Then there exists an Artin-Schreier cover $\psi_S$ over $S=\spec k[[t]]$ whose special fiber is isomorphic to $\psi_0$, whose generic fiber is branched at four points that specialize to b and which have ramification jumps $2$,$2$,$1$ and $1$, and whose ramification divisor is otherwise constant.

\end{proposition}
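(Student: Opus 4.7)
The plan is to mimic the structure of the proof of Proposition \ref{deform3222}, producing an Artin-Schreier cover $\Psi$ over $S=\spec k[[t]]$ given by an affine equation
\[
y^5 - y = F(x,t), \qquad F(x,t) = \frac{N(x,t)}{c\cdot x^{5}(x-t)^{2}(x-tr)(x-ts)},
\]
where $r,s \in \overline{k((t))}$ are roots of a carefully chosen auxiliary quadratic polynomial, specializing at $t=0$ to distinct nonzero constants $r_{0},s_{0} \in k$ (also distinct from $1$), and $N(x,t)\in k[r,s,t][x]$ is a polynomial of $x$-degree at most $8$ with $N(x,0)$ equal to a nonzero constant. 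The key design choice is to put a double pole at $x=t$ and simple poles at $x=tr$ and $x=ts$, so that on the generic fiber the denominator has the correct ``shape'' to deliver ramification jumps $2,1,1$ at these three points.

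Once $F(x,t)$ is fixed, I would verify the special fiber by plugging in $t=0$: because $N(x,0)$ is a constant, the denominator collapses to a unit multiple of $x^{9}$, so $F(x,0)=1/x^{9}$ and the special fiber has a single branch point of ramification jump $9$, hence is isomorphic to $\psi_{0}$ by Proposition \ref{localAS}. On the generic fiber, the poles at $x=t,tr,ts$ give ramification jumps $2,1,1$ (provided $N$ does not vanish at those points, and using that $\gcd(2,5)=1$ so minimality is automatic). The pole at $x=0$ is handled by examining the Laurent expansion
\[
F(x,t)=\frac{a_{5}}{x^{5}}+\frac{a_{4}}{x^{4}}+\frac{a_{3}}{x^{3}}+\frac{a_{2}}{x^{2}}+\frac{a_{1}}{x}+\cdots.
\]
The parameters in $N(x,t)$ should be chosen so that $a_{4}=a_{3}=0$ and $a_{2}\neq 0$; after the purely inseparable extension $t_{1}^{5}=t$, the term $a_{5}/x^{5}$ becomes a $5$-th power and is removed by an Artin-Schreier substitution, leaving $a_{2}/x^{2}$ as the leading term of the minimal-form equation. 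This forces the ramification jump above $x=0$ to be $2$, as desired.

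The main obstacle is the algebraic bookkeeping: one must exhibit explicit choices for the auxiliary equation defining $r,s$ and for the nine coefficients of $N(x,t)$ so that simultaneously (i) the two linear conditions $a_{3}(t)=a_{4}(t)=0$ hold identically in $k[[t]]$, (ii) $a_{2}(t)$, $N(t,t)$, $N(tr,t)$, and $N(ts,t)$ are all nonzero, and (iii) $r_{0},s_{0}$ are distinct, nonzero, and not equal to $1$. A parameter count is reassuring: $N(x,t)$ has $9$ coefficients, one is spent setting $N(x,0)$ to a constant, and the remaining eight (all divisible by $t$) must satisfy two linear conditions, leaving ample slack to arrange the nonvanishing inequalities. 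As in Proposition \ref{deform3222}, the cleanest way to achieve (i)–(iii) in practice is to take the auxiliary polynomial of the form $X^{2}+\alpha(t)X+\beta(t)$ with $\alpha(0),\beta(0)$ tuned so that $X^{2}+\alpha(0)X+\beta(0)$ is separable over $k$ and has roots in $k^{\times}\setminus\{1\}$.

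Once the explicit $F(x,t)$ is produced and these checks verified, Lemma \ref{lemmagenus} together with Lemma IV.2.3 of \cite{MR1011987} confirms that $\Psi$ is a deformation of $\psi_{0}$ over $S$ whose generic fiber is branched at four points specializing to $b$ with ramification jumps $2,2,1,1$ and whose ramification divisor is otherwise constant, completing the proof.
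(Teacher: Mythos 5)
Your strategy is exactly the one the paper follows: the same denominator $x^{5}(x-t)^{2}(x-tr)(x-ts)$, the same device of taking $r,s$ to be roots of an auxiliary quadratic over $k((t))$ specializing at $t=0$ to distinct elements of $k^{\times}\setminus\{1\}$, and the same endgame (special fiber collapsing to $1/x^{9}$; pole orders $2,1,1$ at $x=t,tr,ts$; Laurent expansion at $x=0$ with $a_{4}=a_{3}=0$ and $a_{2}\neq 0$; the inseparable extension $t_{1}^{5}=t$ to absorb $a_{5}/x^{5}$). The one genuine gap is that you never produce the data whose existence is the entire content of the proposition. The parameter count is a plausibility argument, not a proof: the conditions $a_{3}(t)=a_{4}(t)=0$ must hold identically in $k[[t]]$ while $r$ and $s$ are themselves constrained algebraic functions of $t$, and it is not automatic that the nonvanishing requirements in your (ii)--(iii) are compatible with the linear conditions in (i). The paper resolves this with an explicit choice: the numerator is $rs-tx+tx^{2}$ (of $x$-degree only $2$), and $r,s$ are the roots of $X^{2}-2(2t^{3}-t^{2}-2)X-2(t^{3}-2t^{2}-1)=0$, which at $t=0$ becomes the separable polynomial $X^{2}+4X+2$ with roots in $k^{\times}\setminus\{1\}$. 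The quadratic is rigged precisely so that this numerator agrees, in $x$-degrees $0,1,2$, with the corresponding truncation of $a_{5}(x-t)^{2}(x-tr)(x-ts)$ where $a_{5}=1/(r_{0}s_{0}t^{4})$; this forces $a_{4}=a_{3}=0$, and since the degree-$3$ terms cannot also agree, $a_{2}\neq 0$. To turn your proposal into a proof you must either exhibit such a choice and verify these identities (a short computation using $r+s$ and $rs$ from the quadratic), or replace the dimension count with an actual existence argument.
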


\begin{proof}
Let us study the Artin-Schreier cover $\Psi$ over $S=\spec k[[t]]$ given by the normalization of $\mathbb{P}^1_S$ over the extension of its fraction field that is defined by the following affine equation:

\[y^5-y=\frac{rs-tx+tx^2}{r_0s_0x^5(x-t)^2(x-tr)(x-ts)}=:F(x,t).\]

\noindent Where $r$, $s$ are solutions in $\overline{k((t))}$ of the equation:

\[X^2-2(2t^3-t^2-2)X-2(t^3-2t^2-1)=0.\]

\noindent And $r_0$, $s_0$ are the values of $r$ and $s$, respectively, when $t=0$. We will show that $\Psi$ is locally a deformation of $\psi_0$ that satisfies the conditions we seek.

On the special fiber, when $t=0$, $r_0$, $s_0$ are solutions in $k$ of the equation $X^2+4X+2=0$ which does not have $0$ as a root. Thus $r_0$, $s_0 \neq 0$. Also, $X^2+4X+2$ is separable on $k(X)$. Hence, $r_0 \neq s_0$. It follows that $r \neq s \neq 1$.

\[F(x,0)=\dfrac{r_0s_0}{r_0s_0x^9}=\dfrac{1}{x^9}.\]

\noindent Thus, the normalization of the special fiber $\Psi_s$ has ramification jump $9$ at its only branch point. Hence, the cover $\Psi_s$ is birationally equivalent to $\psi_0$.

On the generic fiber, when $t \neq 0$, then $\Psi_{\eta}$ is branched above $x=0$, $x=t$, $x=tr$ and $x=ts$. 
The order of the poles of $F(x,t)$ at $x=t$, $x=tr$ and $x=ts$ is $1$. Expansion of $F(x,t)$ around $0$:

\[ \dfrac{a_5}{x^5} +\sum_{i=1}^4 \dfrac{a_i}{x^i}+ \textrm{terms with non negative $x$-degree,} \]


\noindent where $a_5=\frac{1}{r_0s_0t^4}$. In order for $\frac{a_{5}}{x^{5}}$ to have the same denominator as $F(x,t)$, we multiply the top and the bottom of the fraction by $\frac{(x-t)^2(x-t\cdot r)(x-t \cdot s)}{t^4}$. It would have the form:

\begin{equation}
\label{newtype52a5}
    \frac{(x-t)^2(x-tr)(x-ts)t^{-4}}{r_0s_0x^5(x-t)^2(x-tr)(x-ts)}.
\end{equation}

\noindent One can check that the coefficients of the terms of the numerator of the above fraction with $x$-degree $0,1,2$ are precisely the corresponding ones of the numerator of $F(x,t)$. Thus, we have $a_{4}=a_3=0$. Moreover, we must have $a_{2}$ different from $0$, or else we can not cancel out the term with $x$-degree $3$ of the numerator of (\ref{newtype52a5}). After a finite inseparable extension of $k((t))$ with equation $t_1 ^5 =t$, the leading term $\frac{a_{5}}{x^{5}}$ of the expansion of $F(x,t)$ around $x=0$ is a $5$-th power. The second term is $\frac{a_{2}}{x^{2}}$, which is non-zero and thus becomes the leading term of the affine equation in standard form for $\Psi_{\eta}$. Thus, the ramification jump above $x=0$ is $2$.

Thus, the cover $\Psi_{\eta}$ is branched at four points that specialize to $0$ which have ramification jumps $2$ ,$2$, $1$ and $1$. By Lemma \ref{lemmagenus} and Lemma IV.2.3 of \cite{MR1011987}, $\Psi$ is a deformation of $\psi_0$ over $S$.

\end{proof}

 We have proved part $(e)$ and completed the proof of Theorem \ref{deformationtheorem}.

\bibliographystyle{alpha}
\bibliography{cms4}

\begin{thebibliography}{Mau06}

\bibitem[BM00]{MR1767273}
Jos\'e Bertin and Ariane M\'ezard.
\newblock D\'eformations formelles des rev\^etements sauvagement ramifi\'es de
  courbes alg\'ebriques.
\newblock {\em Invent. Math.}, 141(1):195--238, 2000.

\bibitem[Cre84]{MR742696}
Richard~M. Crew.
\newblock Etale {$p$}-covers in characteristic {$p$}.
\newblock {\em Compositio Math.}, 52(1):31--45, 1984.

\bibitem[Eis95]{MR1322960}
David Eisenbud.
\newblock {\em Commutative algebra}, volume 150 of {\em Graduate Texts in
  Mathematics}.
\newblock Springer-Verlag, New York, 1995.
\newblock With a view toward algebraic geometry.

\bibitem[Har77]{MR0463157}
Robin Hartshorne.
\newblock {\em Algebraic geometry}.
\newblock Springer-Verlag, New York-Heidelberg, 1977.
\newblock Graduate Texts in Mathematics, No. 52.

\bibitem[Lan02]{MR1878556}
Serge Lang.
\newblock {\em Algebra}, volume 211 of {\em Graduate Texts in Mathematics}.
\newblock Springer-Verlag, New York, third edition, 2002.

\bibitem[M\'98]{MEZAR1998}
Ariane M\'{e}zard.
\newblock {\em Quelques problemes de deformations en caracteristique mixte}.
\newblock PhD thesis, 1998.
\newblock Th\`{e}se de doctorat Sciences et techniques communes Grenoble 1
  1998.

\bibitem[Mau06]{MR2223481}
Sylvain Maugeais.
\newblock Quelques r\'{e}sultats sur les d\'{e}formations \'{e}quivariantes des
  courbes stables.
\newblock {\em Manuscripta Math.}, 120(1):53--82, 2006.

\bibitem[Neu99]{MR1697859}
J\"urgen Neukirch.
\newblock {\em Algebraic number theory}, volume 322 of {\em Grundlehren der
  Mathematischen Wissenschaften [Fundamental Principles of Mathematical
  Sciences]}.
\newblock Springer-Verlag, Berlin, 1999.
\newblock Translated from the 1992 German original and with a note by Norbert
  Schappacher, With a foreword by G. Harder.

\bibitem[Pri03]{MR2016596}
Rachel~J. Pries.
\newblock Conductors of wildly ramified covers. {III}.
\newblock {\em Pacific J. Math.}, 211(1):163--182, 2003.

\bibitem[PZ12]{MR2985514}
Rachel Pries and Hui~June Zhu.
\newblock The {$p$}-rank stratification of {A}rtin-{S}chreier curves.
\newblock {\em Ann. Inst. Fourier (Grenoble)}, 62(2):707--726, 2012.

\bibitem[Ser79]{MR554237}
Jean-Pierre Serre.
\newblock {\em Local fields}, volume~67 of {\em Graduate Texts in Mathematics}.
\newblock Springer-Verlag, New York-Berlin, 1979.
\newblock Translated from the French by Marvin Jay Greenberg.

\bibitem[Sil86]{MR817210}
Joseph~H. Silverman.
\newblock {\em The arithmetic of elliptic curves}, volume 106 of {\em Graduate
  Texts in Mathematics}.
\newblock Springer-Verlag, New York, 1986.

\bibitem[SOS89]{MR1011987}
T.~Sekiguchi, F.~Oort, and N.~Suwa.
\newblock On the deformation of {A}rtin-{S}chreier to {K}ummer.
\newblock {\em Ann. Sci. \'Ecole Norm. Sup. (4)}, 22(3):345--375, 1989.

\end{thebibliography}

\Addresses

\end{document}